\documentclass{article}
\usepackage{arxiv/arxiv_spread}
\usepackage{parskip}





\usepackage[utf8]{inputenc} 
\usepackage[T1]{fontenc}    
\usepackage{microtype}
\usepackage{graphicx}
\usepackage{booktabs} 
\usepackage{pgfplots}
\usepgfplotslibrary{groupplots}
\usepackage{pifont}

\usepackage[colorlinks=true,linkcolor=blue,citecolor=blue,urlcolor=blue]{hyperref}
\usepackage[round]{natbib}

\usepackage{algorithmic}
\usepackage[ruled,vlined]{algorithm2e}



\usepackage{amsmath}
\usepackage{amssymb}
\usepackage{bbm}
\usepackage{mathtools}
\usepackage{amsthm}
\usepackage{multicol}
\usepackage[capitalize,noabbrev]{cleveref}

\usepackage{xcolor}
\definecolor{orange}{rgb}{0.2,0.7,0.4}


\usepackage{graphicx}
\usepackage{wrapfig}
\graphicspath{{figures/}}
\usepackage{caption}
\usepackage{subcaption}
\captionsetup{font=normalsize,labelfont={bf,sf}}
\captionsetup[sub]{font=small,labelfont={bf,sf}}

\usepackage{amsfonts}
\usepackage{mwe}
\usepackage{multirow}

\usepackage{enumitem}
\usepackage{makecell}

\usepackage{tabularx}
\usepackage{float}

\usepackage{pgfplots}

\usepackage{enumitem}
\setlist[enumerate,1]{label={\bfseries\arabic*.},ref={\bfseries\theenumii.\arabic*}}
\setlist[enumerate,2]{label={\bfseries\alph*)},ref={\bfseries\theenumi\alph*)}}
\usepackage{makecell}

\newcommand{\st}{\mathop{\mathrm{subject\,\,to}}}

\usepackage{tabularx}
\usepackage{float}

\usepackage{pgfplots}


\include{math_commands}

\title{Quantile Additive Trend Filtering
}

\author{
  Zhi Zhang, \hspace{3pt}Kyle Ritscher, \hspace{3pt}and Oscar Hernan Madrid Padilla\hspace{5pt}  
    \\ \\
  {\normalsize Department of Statistics and Data Science, University of California, Los Angeles}\\
}    

\date{}

\begin{document}

\maketitle


\begin{abstract}
\vspace{-0pt}
This paper introduces and analyzes quantile additive trend filtering, a novel approach to model the conditional quantiles of the response variable given multivariate covariates. Under the assumption that the true model is additive, and that the components are functions whose $r$th order weak derivatives have bounded total variation, our estimator is a constrained version of quantile trend filtering within additive models. The primary theoretical contributions are the error rate of our estimator in both fixed and growing input dimensions. In the fixed dimension case, we show that our estimator attains a rate that mirrors the non-quantile minimax rate for additive trend filtering, featuring the main term $n^{-2r/(2r+1)}$. For growing input dimension ($d$), our rate has an additional polynomial factor $d^{(2r+2)/(2r+1)}$.
We propose a practical algorithm for implementing quantile additive trend filtering using dimension-wise backfitting.  Experiments in both real data and simulations confirm our theoretical findings. We provide a public implementation of the algorithm at \url{https://github.com/zzh237/QATF}.

\end{abstract}

\keywords{Total Variation, nonparametric quantile regression, additive trend filtering.}

\section{INTRODUCTION}

Quantile regression has been widely used for analyzing various types of data, especially in the presence of heteroscedasticity or non-normal error distributions. 
It has been considered robust, with broad applications ranging from economics and finance to environmental science and healthcare \citep{coad2006innovation,sanderson2006statistically, perlich2007high, benoit2009benefits, wasko2014quantile, pata2015astronomical, belloni2023high}.


Additive models and trend filtering have emerged as powerful tools for handling high-dimensional and nonparametric estimation problems. Additive models, introduced by \cite{friedman1981projection}, reduce dimensionality while providing interpretable predictions, and have been utilized in many fields, including finance \citep{hastie1987non}, marketing \citep{breiman1985estimating}, healthcare \citep{hastie2017generalized},  environmental pollution assessment \citep{hastie2017generalized}, policy analysis \citep{hastie2017generalized}, energy demand forecasting\cite{fasiolo2021fast},  social science \citep{fasiolo2021fast}, and politics  \citep{petersen2016fused}.


Trend filtering, proposed by \cite{mammen1997locally,kim2009ell_1}, is a nonparametric method which fits a piecewise polynomial function. It has garnered significant interest in nonparametric research \citep{rudin1992nonlinear, tibshirani2014adaptive, kim2009ell_1,sadhanala2019additive}, and is particularly useful in scenarios with in-homogeneous smoothness or sharp changes in the underlying data, such as image processing \citep{rudin1992nonlinear}, final market crash \citep{gu2021factor}, shifts in the dynamics of macroeconomics \citep{mulvey2016identifying}, and understanding of earthquakes \citep{yano2022l1}.



Motivated by data sets  with heavy-tails and outliers, we build our model based on quantile regression. To accommodate the prevalent application of additive models, we assume the underlying function is additive, and we use trend filtering as a regularization technique.
This leads us to a more robust, flexible and interpretable framework, known as quantile additive trend filtering, which combines the benefits of  quantile regression, additive models, and trend filtering.


\subsection{Summary of Contributions}

The proposed model in this paper is quantile additive trend filtering (QATF), which offers a dual benefit of dimension reduction and robust regression, and has not yet been studied. 
We conduct an examination of risk bounds for QATF and develop an algorithm for learning the QATF model. The techniques we developed could also be used for analyzing the other additive models with different classes of functions. The most related work to our work is the paper \cite{madrid2022risk}, which focuses on the univarite quantile trend filtering, but does not consider additive models. We also provide non-asymptotic analysis, in contrast to \cite{narayanexpected}, which only addresses asymptotic behavior of the estimator. Moreover, the papers  \cite{gasthaus2019probabilistic} and \cite{tagasovska2019single} primarily focus on methodology and experimentation. 


QATF is built around the univariate trend filtering estimator, defined by constraining according to the sum of $\ell_1$ norms of discrete derivatives of the component functions, with a quantile loss applied. When the underlying quantile function is additive, with components whose $(r-1)$th derivatives have total variation bounded by $V$, we derive error rates for $(r-1)$th order quantile additive trend filtering. This rate is $n^{-2r/(2r+1)} V^{2/(2r+1)}\max\{1, V^{(2r-1)/(2r+1)}\}$ for a fixed input dimension $d$ (under weak assumptions). The main term $n^{-2r/(2r+1)}V^{2/(2r+1)}$, is the same as the non-quantile minimax rate in \cite{sadhanala2019additive}. It also includes an extra term, which is of order $\mathcal{O}(1)$ if $V = \mathcal{O}(1)$. Such a value of $V$ is referred to as canonical scaling, as described in \cite{madrid2022risk,sadhanala2016total}. 

Additionally, we explore error rates for QATF where the input dimension ($d$) increases with the sample size ($n$). We show the proposed estimator exhibits an error rate that includes a polynomial factor of $d^{(2r+2)/(2r+1)}$ multiplied by the fixed dimension error rate. When $r$ is large, the exponent $d^{(2r+2)/(2r+1)}$ approaches 1, recovering the linear dependence on $d$ in the standard additive trend filtering model. 

Our rates are minimax up to logarithmic factors in fixed dimension cases, and off by a small factor in the growing dimension setting. This aligns closely with the minimax rates for mean estimation in additive trend filtering, as discussed in \cite{sadhanala2019additive}. However, \cite{sadhanala2019additive} primarily considers sub-Gaussian errors, where we only introducing minor assumptions, proving that the constrained quantile trend filtering estimator is robust
to heavy-tailed (with no moment conditions required) distributions of the errors. In addition, the proof techniques we employ differ substantially from those in the cited work, see our discussion in Section~\ref{app_proof_contribution} of the Appendix.

Finally, we develop an algorithm for performing quantile trend filtering in additive models, which involves a backfitting approach for each dimension. We validate the algorithm in extensive simulation settings consisting of additive models in the presence of noise, and then demonstrate the utility of our method by constructing prediction intervals using the publicly available 2024 World Happiness dataset.

The review of additive models, trend filtering, and quantile regression is given in Section~\ref{app_review_of_things} of Appendixs.

\section{QUANTILE TREND FILTERING FOR ADDITIVE MODELS} 
\subsection{Notations}
\label{notations}
Before we define the estimator, let us introduce the necessary definitions and notation.
Given a distribution $Q$, and a set of  i.i.d. points $X =  \set{X^1, \dots, X^n} \subseteq [0,1]^d$, $i=1,\dots, n$, from $Q$, we define the $L_2$, euclidean and empirical inner products $\innerprod{\cdot}{\cdot}_{L_2}$  $\innerprod{\cdot}{\cdot}_2$  $\innerprod{\cdot}{\cdot}_n$ as below.

Let $\gX$ be $[0,1]^d$, for two functions $f: \gX \rightarrow \R$ and $g: \gX \rightarrow \R$, we define 
\bea
&\innerprod{f}{g}_{L_2} = \int_{\gX} f(x)g(x)dQ(x), \innerprod{f}{g}_2 = \sum_{i=1}^n f(X^i)g(X^i),
\bea
And $\innerprod{f}{g}_n = \frac{1}{n}\innerprod{f}{g}_2$.
Hence, we define the $L_2$ norm $\norm{\cdot}_{L_2}$ as $\norm{f}_{L_2}^2 = \innerprod{f}{f}_{L_2} = \int_{\gX} f(x)^2 dQ(x)$. 
We have empirical norm $\norm{\cdot}_{n}$ as 
$\norm{f}_{n}^2 = \innerprod{f}{f}_n = \frac{1}{n}\sum_{i=1}^n f(X^i)^2$. The $\norm{\cdot}_2$ and $\norm{\cdot}_{\infty}$ are defined as $\norm{f}_{2}^2 = n \norm{\cdot}_{n}^2$, and $\norm{f}_{\infty} = \max_{i=1,\dots n}\abs{f(X^i)}$. We also use the $\norm{\cdot}$ as the common euclidean norm, and we use $\norm{\cdot}_{\infty}$ as the vector $\ell_{\infty}$ norm. For sequences $a_n$ and $b_n$, we will also use the notation $a_n \lesssim b_n$ to denote that $a_n \leq Cb_n$ for an absolute constant $C$. We write    $a_n = O(b_n)$  if there exists   constants $C>0$  and  $n_0 >0$ such that  $n \geq  n_0$ implies   that  $a_n \,\leq \,  C b_n$.  
	Furthermore, if  $a_n =  O(b_n)$ and  $b_n =  O(a_n)$ then we write  $a_n =  \Theta(b_n)$ or  $a_n \asymp b_n$.
	For a sequence of random variables $x^{(n)}$ and a positive sequence $a_n$ we write  $x^{(n)} = O_{\mathrm{pr}}(a_n)$ if  for every $\epsilon>0$ there exists  $M > 0$ such that    $\sP\left(|x^{(n)}| \,\geq\,  M a_n\right) < \epsilon$ for all $n$. Let $\ind{\gA}$ be the indicator function over the set $\gA$ such that
        $
\ind{\gA} =
\begin{cases}
1, & \text{if } a \in \gA, \\
0, & \text{if } a \notin \gA.
\end{cases}
$
For a functional $\nu$,  we let $B_{\nu}(\epsilon)$ for the $\nu$-ball of radius $V > 0$, i.e., $B_{\nu}(V) = \set{f: \nu(f) \le V}$.  So $B_n(V)$ for the $\norm{\cdot}_n$-ball of radius $V$, and $B_{L_2}(V)$ for the $\norm{\cdot}_{L_2}$-ball of radius $V$, and $B_{\infty}(V)$ for the $\norm{\cdot}_{\infty}$-ball of radius $V$.

For an additive function $f = \sum_{j=1}^df_j$, its input is a vector $X^i = (X^i_1, \dots, X^i_d) \in \R^d$, and we have  
$ 
f(X^i) = \sum_{j=1}^d f_j(X^i_j).
$ 
In this paper, we reserve index $i$ for data point, and $j$ for the component, and for any function $f$, we use $f^i$ as a shorthand for $f(X^i)$, and $f^i_j$ for $f_j(X^i_j)$.

Next, we introduce our specific model and data framework.

\subsection{Model and Data Description}
Suppose we are given data \( \{(X^i, Y^i)\}_{i=1}^n \subset \mathcal{X} \times \mathbb{R} \), where \( \mathcal{X} = [0, 1]^d \), generated as
\begin{equation}
\label{eq45}
\begin{aligned}
    Y^i &= f_{0}(X^i) + \epsilon^i = \sum_{j=1}^d f_{0j}(X^i_j) + \epsilon^i,
\end{aligned}
\end{equation}
where \( \mathbb{P}(\epsilon^i \leq 0 \mid X^i) = \tau \), with 
\begin{align*}
f_{0}: [0, 1]^d \rightarrow \mathbb{R}, \quad  f_{0j}: [0, 1] \rightarrow \mathbb{R},   
\end{align*}
and where \( X^1, X^2, \ldots, X^n \) are i.i.d. draws from a distribution \( \mathscr{F} \) in \( [0, 1]^d \).

Thus, $f_{0}(X^i) = \sum_{j=1}^d f_{0j}(X^i_j)$
is the \(\tau\)-conditional quantile of \(Y^i\) given \(X^i = (X^i_1, \ldots, X^i_d)\) of \(d\) dimensions, i.e. $f_0(X^i) = F^{-1}_{Y^i|X^i}(\tau)$, for $i=1, \dots, n$, where $F_{Y^i|X^i}(\cdot)$ denotes the conditional cumulative distribution function of $Y^i$ given $X^i$. 


An alternative way to write our model is to let
\[
\theta^i_0 \coloneqq f_0(X^i) \quad \text{and} \quad \theta_{0j}^i =  f_{0j}(X^i_j),
\]
so that $Y^i$ can be written as 
\begin{equation}
\begin{aligned}
Y^i &= \theta^i_0 + \epsilon^i = \sum_{j=1}^d \theta_{0j}^i + \epsilon^i.
\label{eq:alternative_model}
\end{aligned}
\end{equation}

We find (\ref{eq:alternative_model}) to be helpful when both describing the methods and theory for our proposed model.

Furthermore, we collect the \(j\)th component of the inputs into a vector of \(n\) dimensions, i.e., \(X_j = (X^1_j, X^2_j, \dots, X^n_j)\) for \(j = 1, \dots, d\).

Our goal is to estimate $f_0$ or $\theta_0$. 
In this paper we are interested in scenarios where $f_0$ is (or is close to) a piecewise polynomial function.

\subsection{Discrete Difference Operator}

The discrete difference operator is a fundamental component in defining the trend filtering estimator for a given integer \( r \). 

Let \( r \in \mathbb{N}_{\ge 1} \) and denote \( X =  (X^1, \dots, X^n) \in \R^n \) as a vector of univariate input points. Let $(X^{(1)}, \dots, X^{(n)})$ be the ordered inputs  where \( X^{(1)} < \dots < X^{(n)} \). The operator \( D^{(X,r)}_n \in \R^{(n-r) \times n} \) is defined on the ordered inputs recursively.\\
\textbf{First-Order Difference Operator (\( r = 1 \)).} The first-order difference operator \( D^{(X,1)}_n \) is defined as:
\beq 
\label{eq:first_order}
&D^{(X,1)}_n = \begin{pmatrix}
-1 & 1 & 0 & \dots & 0 & 0 \\
0 & -1 & 1 & \dots & 0 & 0 \\ 
\vdots & \vdots & \vdots & \ddots & \vdots & \vdots \\
0 & 0 & 0 & \dots & -1 & 1 
\end{pmatrix} \in \R^{(n-1) \times n}, \\ 
\quad & D^{(X,1)}_n(\theta) = (\theta^2 - \theta^1, \dots, \theta^n - \theta^{n-1}).
\beq 

\paragraph{Higher-Order Difference Operators (\( r \geq 2 \)).} For higher-order difference operators, we recursively define \( D^{(X,r)}_n \in \R^{(n-r) \times n} \) using the previous order's operator,
\begin{equation}
\label{eq:higher_order}
\begin{aligned}
D^{(X,r)}_n = D^{(X,1)}_{n-r+1} \cdot \text{diag}&\left(\frac{r-1}{X^{(r)} - X^{(1)}}, \dots,\right. \\
&\left.\frac{r-1}{X^{(n)} - X^{(n-r+1)}}\right) D^{(X,r-1)}_n,
\end{aligned}
\end{equation}
where \( D^{(X,1)}_{n-r+1} \in \R^{(n-r) \times (n-r+1)} \) is first-order operator as in Equation \eqref{eq:first_order}. Examples of the discrete difference operator are given in Section~\ref{app_examples_discrete_difference_operator} of the Appendix.

\noindent
In summary, the matrix \( D^{(X,r)}_n \) generalizes the discrete difference operator to account for non-uniformly spaced inputs, making it a powerful tool in trend filtering applications.

\subsection{Univariate Quantile Trend Filtering}
We first introduce the univariate quantile trend filtering, which focuses on signals (quantile sequences) that have bounded \(r\)th order total variation. 

We first introduce a function used in quantile regression to define the loss. The check function, $\rho_{\tau}(u)$, for a given quantile level \(\tau \in (0,1)\), is defined as follows:
\bea \rho_{\tau}(u) =  u\paren{\tau - \ind{u<0}} = \max\set{\tau u, (\tau-1)u}. 
\bea 

In the analysis presented in \cite{madrid2022risk},
the constrained univariate quantile trend filtering estimator $\widehat \theta^{(r)}$ is given by 
\beq 
\label{u_q_t}
&\min_{\theta\in \R^n} \sum_{i=1}^n \rho_{\tau}(Y^i - \theta^i) 
\\
&\text{subject to}\quad \|D^{(r)}(\theta)\|_1 \le V.
\beq 
\begin{remark}
{\bf a.} The $D^{(r)}$ is the $r$th order difference operator, for a vector \(\theta \in \mathbb{R}^n\),  \(D^{(0)}(\theta) = \theta\), \(D^{(1)}(\theta) = (\theta^2 - \theta^1, \ldots, \theta_n - \theta_{n-1})\), and \(D^{(r)}(\theta)\), for \(r \geq 2\), as recursively defined as \(D^{(r)}(\theta) = D^{(1)}(D^{(r-1)}(\theta))\). Note that \(D^{(r)}(\theta) \in \mathbb{R}^{n-r}\).\\
{\bf b.} $D^{(r)}$ in the univariate quantile trend filtering estimator is equal to $D^{(X,r)}_n$ in \eqref{eq:higher_order} when $x$ equals the grid $(1/n, 2/n, \ldots, n/n)$.
\end{remark}


Due to \(\ell_1\) penalization  constrained univariate quantile trend filtering estimator enforce \(D^{(r)}(\widehat{\theta}^{(r)})\) to be sparse. It is known that for \(\theta \in \mathbb{R}^n\), \(D^{(r)}(\theta)\) has \(k\) nonzero entries if and only if \(\theta = (f(1/n), f(2/n), \ldots, f(n/n))^\top\) for a \textit{discrete spline function} \(f\), consisting of \((k+1)\) polynomials of degree \(r-1\) (see Proposition D.3 in \cite{guntuboyina2020adaptive}). For this reason, just like the usual trend filtering estimators, the univariate quantile trend filtering estimator fits discrete splines. For the precise definition of a discrete spline see Section 2 in \cite{mangasarian1971discrete}.



\subsection{Quantile Additive Trend Filtering via Discrete Difference Operator}

Given the definition of the discrete difference operator, the quantile additive trend filtering (QATF) estimator regularizes each component function based on the total variation of its \( r \)th order discrete derivative. To define the estimator, we introduce the constrained set \( K^{(X,r)}(V) \), and the estimator \( \widehat{\theta}^{(r)} \) is derived from the following optimization problem:
\begin{equation}
\label{eq303}
\min_{\theta \in K^{(X,r)}(V)} \sum_{i=1}^n \rho_{\tau} \left( Y^i - \sum_{j=1}^d \theta^i_j \right),
\end{equation}
where the feasible set is defined as:
\beq
\label{space_K}
K^{(X,r)}(V) := \left\{ \theta : \theta = \sum_{j=1}^d \theta_j, \sum_{j=1}^d \left\| D^{(X_j, r)}_n S_j \theta_j \right\|_1 \leq V, \right. \\
\left. \mathbbm{1}^{\top} \theta_j = 0, \, j = 1, \dots, d \right\}.
\beq
The \( S_j \in \mathbb{R}^{n \times n} \) is a permutation matrix such that
\[
S_j \theta_j = \left( \theta^{(1)}_j, \theta^{(2)}_j, \ldots, \theta^{(n)}_j \right), \quad j = 1, \dots, d,
\]
where \( \theta^{(1)}_j < \dots < \theta^{(n)}_j \) among \( \theta_j = (\theta^1_j, \theta^2_j, \ldots, \theta^n_j) \).
The \( \mathbbm{1} \) is a vector with all entries equal to 1, ensuring that \( \mathbbm{1}^{\top} \theta_j = 0 \) for identifiability. The positive constant \( V \) controls the additive \( \ell_1 \) penalization. As shown by \cite{tibshirani2014adaptive} and \cite{sadhanala2019additive}, this regularization results in piecewise polynomial components of degree \( (r-1) \).

Compared to the constrained additive trend filtering formulation in \cite{sadhanala2019additive}, where Sub-Gaussian errors are assumed, the formulation in Equation~\eqref{eq303} is designed to handle models with heavy-tailed distributions, such as Cauchy and t-distributed errors, without assuming the existence of moments. This makes the problem formulation and analysis more challenging.

\subsection{Falling Factorial Representation}
The falling factorial representation offers an alternative method to express QATF estimates. As seen in \cite{tibshirani2014adaptive, wang2015trend}, this representation is useful for deriving rapid error rates in trend filtering.
\begin{definition} [Definition 2.1 in \cite{sadhanala2019additive}]
\label{f_f_r}

Given the knot points 
\((t^1 < \dots < t^n)\) 
where \( t^1 < \dots < t^n \in [0,1] \), we define the falling factorial basis as follows:
\vspace{-6pt}
\beq 
\label{f_e_q}
h_i^{(t)}(t) &= \prod_{l=1}^{i-1}(t - t^l), \quad i = 1, \dots, r+1, \\
h_{i+r+1}^{(t)}(t) &= \prod_{l=1}^r (t-t^{i+l}) \cdot \ind{t > t^{i+r}}, \\
\quad &i = 1, \dots, n-r-1.
\beq 
Then we define a linear subspace of function as 
$$
\gH_j  = \Bigl\{\sum_{i=1}^n \alpha_j^i h_i^{(X_j)}: \alpha_j^1, \dots, \alpha_j^n \in \R \Bigr\}, 
$$  where $\alpha_j^1, \dots, \alpha_j^n$ are coefficients for the $j$th dimension.
\end{definition}
The space \(\gH_j\) is defined as the span of the \( (r-1) \)th order falling factorial basis \( \{h_1^{(X_j)}, \dots, h_n^{(X_j)}\} \) over the \( j \)th dimension. These functions represent piecewise polynomial functions of order \( r-1 \).

If  \( f_j \in \gH_j \), then
$
f_j = \alpha_j^1 h_1^{(X_j)} + \alpha^2 h_2^{(X_j)} + \dots + \alpha_j^n h_n^{(X_j)}.
$
This results in a continuous piecewise polynomial function, with a global polynomial structure determined by \( \alpha_j^1, \dots, \alpha_j^{r+1} \). Notably, when \( \alpha_j^{i+r+1} \neq 0 \), \( f_j \) exhibits changes in its \( r \)th derivative at the knot \( t^{i+r} \), as well as in all lower-order derivatives.


Based on the Definition~\ref{f_f_r}, we will give an equivalent formulation of problem in Equation~\eqref{eq303}. Before we do that, we introduce an important definition. 
\begin{definition}[Total Variation]
  For a function \( f: [0,1] \rightarrow \mathbb{R} \), its total variation is defined as:
\begin{equation}
\label{eq_tv}
\begin{aligned}
TV(f) &= \sup\left\{ \sum_{i=1}^p |f(z_{i+1}) - f(z_i)| : \right. \\
& \left. z_1 < \dots < z_p, \ \text{is a partition of } [0,1] \right\}.
\end{aligned}
\end{equation}
For simplicity, we use \( TV^{(r)} \) to denote the total variation of the \( (r-1) \)th weak derivative of \( f \), i.e., \( TV^{(r)}(f) = TV(f^{(r-1)}) \).

Total Variation is discussed further in Appendix~\ref{app_discussion_TV}.

\end{definition}
Based on the relationship \( \theta^i_j = f_j(X^i_j) \) and the arguments above, the following problem is equivalent to the problem in Equation~\eqref{eq303}. Define the function class \( \gF^{(r)}(V) \) and the estimator \( \widehat{f}^{(r)}_{\gH} \) as follows:
\beq
\label{space_F}
\gF^{(r)}(V) \coloneqq \left \{ f = \sum_{j=1}^d f_j : f_j \in \gH_j, 
\sum_{j=1}^d TV(f_j^{(r-1)}) \leq V, \right. \\
\left. \sum_{i=1}^n f_j(X^i_j) = 0 \right\}, 
\beq

\begin{equation}
\label{eq298}
\min_{f \in \gF^{(r)}(V)} \sum_{i=1}^n \rho_{\tau}\left(Y^i - \sum_{j=1}^d f_j(X^i_j)\right). 
\end{equation}
Here, \( \gH_j \) is the span of the falling factorial basis over the \( j \)th dimension, defined in Equation~\eqref{f_e_q}, and \( \sum_{i=1}^n f_j(X^i_j) = 0 \) is required for identifiability.


\paragraph{Penalized Version of the Estimator.}

Using general Lagrange duality theory, the problem in Equation~\eqref{eq303} can be reformulated as a penalized version by introducing a positive tuning parameter \( \lambda \):
\beq  
\label{eq70}&
\min_{\substack{\theta_1, \dots, \theta_d \in \R^n}} \sum_{i=1}^n \rho_{\tau}(Y^i -  \sum_{j=1}^d \theta^i_j) + \lambda \sum_{j=1}^d  \norm{D^{(X_j,r)}_nS_j\theta_j}_1 \\
&\text{subject to}\quad \boldsymbol{1}^{\top} \theta_j =0, \;\;\; j=1,\ldots,d. 
\beq




For appropriate values of \( V \) and \( \lambda \), Equations~\eqref{eq303} and \eqref{eq70} are equivalent. For practical implementation, \eqref{eq70} will be employed in our experiments.


\section{MAIN RESULTS}
In this section, we present the main results of the paper concerning the theoretical guarantees of the QATF estimator. 
In our analysis, we adopt key assumptions that guide the theoretical foundation of the work. These are only describe them here at a high level, but are provided and explained in full mathematical detail in Section~\ref{app_assumptions_thm1} of the Appendix. 



First, we assume the input data lies within \([0,1]^d\), with the maximum gap between consecutive values controlled by \(O(\frac{\log n}{n})\), ensuring a well-behaved distribution for analysis. Remark~\ref{remark_data_domain} explains that this assumption is not restrictive.

Next, we assume the additive functions are centered, meaning their sum across input dimensions is zero, ensuring model identifiability. This follows standard practice in additive models.

Finally, we introduce an assumption for the conditional distribution function of the outcome variable, which guarantees reliable quantile estimates and is commonly applied in related statistical literature.

\subsection{ Results for QATF Estimator}

To begin, we introduce notation for a loss function,  $\Delta_n^2$. This loss function is not used for training; rather, it is used to quantify the quality of the estimators. The mathematical formulation of this loss function is as follows:
let $X^1, \dots, X^n$ be given data inputs, and consider a function $\delta: [0, 1]^d \rightarrow \mathbb{R}$.  Define $\Delta_n^2$ as a mapping from functions to real numbers
\beq 
\label{huber loss}
\Delta_n^2(\delta) = \frac{1}{n}\sum_{i=1}^n \min\set{\abs{\delta(X^i)}, \delta(X^i)^2},
\beq 
which, up to constants, is the Huber loss  \citep{huber1992robust}.  
 We then provide the risk bounds for estimators in Equation~\eqref{eq303} and \eqref{eq298}. However, the primary challenges encountered in our study arise from the use of quantile loss, as opposed to least squares loss, and the incorporation of an additive model with potentially growing dimension. Both parts present considerable difficulties in proving risk bounds.  
Thus, we give the bounds based on Huber-type loss in \eqref{huber loss}, as this loss naturally appears as a lower bound to the quantile population loss. See \eqref{bound_two_losses} in the Appendix for a more detailed explanation. For these reasons, the Huber loss is a natural loss function in quantile regression \citep{belloni2023high, madrid2022risk, ye2021non}. From a statistical perspective, the Huber loss provides insights into the convergence rate of the estimator toward the true value, as it captures some notion of the "average" error, albeit in a slightly weaker sense than the usual least squares loss.




\begin{theorem}
\label{theorem1}
Let $\set{Y^i}_{i=1}^n$ be any sequence of independent random variables which satisfies Assumption~\ref{assumptionA}
and let $f_0$ be as defined in \eqref{eq45}, where $f_{0}(X^i)$ represents the \(\tau\)-conditional quantile of \(Y^i\) given \(X^i\). Suppose Assumption \ref{assumption1} holds on the data inputs, and Assumption~\ref{assumption4} holds on $f_0$. Assume that the dimension $d$ of the input space is fixed, and that the underlying regression function is additive, \smash{$f_0=\sum_{j=1}^d f_{0j}$}. Here, $f_0 \in \gF^{(r)}(V)$ and the components $f_{0j}$, $j=1,\ldots,d$, are $r$ times weakly differentiable, with $\sum_{j=1}^d TV(f_{0j}^{(r-1)}) = V^*$. If $V$ is chosen such that $V \ge V^* = \sum_{j=1}^d TV(f_{0j}^{(r-1)})$, then 
\begin{equation}
\label{rate_fix_d}
\begin{aligned}
&\Delta_n^2(\widehat f_{\gH}^{(r)} - f_0) = \\  &\quad O_{pr} \Big( n^{-\frac{2r}{2r+1}} V^{\frac{2}{2r+1}}  \max\left\{1, V^{\frac{2r-1}{2r+1}} \right\} \Big).
\end{aligned}
\end{equation}
\end{theorem}
\vspace{-5pt}
\begin{remark}
Theorem~\ref{theorem1} extends Theorem 1 in \cite{sadhanala2019additive} to quantile regression, differing from their Theorem 1 by the inclusion of an additional factor: $\max\set{1, V^{(2r-1)/(2r+1)}}$, 
which can go to infinity if $V$ grows to infinity. However, under the canonical scaling $V^* = \gO(1)$ as mentioned in \cite{madrid2022risk, sadhanala2016total}, one can choose $V = \gO(1)$ as well, thus the above term is also $\gO(1)$. 
Theorem 3 in \cite{sadhanala2019additive} presents a lower bound, detailed in Equation (39). This lower bound aligns with our upper bound rate stated in Equation~\eqref{rate_fix_d} of our main manuscript, which holds for fixed dimension $d$, differing only by logarithmic factors. Hence, in the fixed dimension $d$, our bound is minimax optimal. Furthermore, the lower bound of their paper is for the Sub-Gaussian case, our upper bound is for arbitrary error terms that do not require moment conditions. 
\end{remark}

Theorem~\ref{theorem1} assumes that $\tau$ is fixed. For the cases where $\tau \rightarrow 0$ or $\tau \rightarrow 1$, see the discussion in Section~\ref{app_discussion_tau} of the Appendix.

\subsection{Error Bounds for A Growing Dimension $d$}
In this subsection, we allow the input dimension $d$ to grow with the sample size $n$. 
Furthermore, to achieve an error rate that scales linearly with the dimension \(d\), we assume the input points are i.i.d. from a continuous distribution on \([0,1]^d\) that decomposes into independent marginals. For more details, refer to Section~\ref{app_discussion_assumption6} of the Appendixs.

We now state our main result in the growing \( d \) case, whose proof can be found in Appendix \ref{app_lemmas_theorem2}, \ref{app_theorem2}.

\begin{theorem}
\label{theorem2}
Let $\set{Y^i}_{i=1}^n$ be any sequence of independent random variables which satisfies Assumption~\ref{assumptionA}
and let $f_0$ be as defined in \eqref{eq45}, where $f_{0}(X^i)$ represents the \(\tau\)-conditional quantile of \(Y^i\) given \(X^i\).
Let $\set{X^i}_{i=1}^n$ be the input points which satisfy Assumption~\ref{assumption1} and \ref{assumption6}. Also, suppose that $f_0$ satisfies  Assumption~\ref{assumption4}. Assume the underlying regression function is additive, \smash{$f_0=\sum_{j=1}^d f_{0j}$}, where $f_0 \in \gF^{(r)}(V)$. The components $f_{0j}$, $j=1,\ldots,d$, are $k$ times weakly differentiable, and $\sum_{j=1}^d TV(f_{0j}^{(r-1)}) = V^*$. If $V$ is chosen such that $V \ge V^* = \sum_{j=1}^d TV(f_{0j}^{(r-1)})$, then
\begin{equation}
\label{rate_fix_d}
\begin{aligned}
\Delta_n^2(\widehat f_{\gH}^{(r)} - f_0) = 
& O_{pr} \Big( d^{\frac{2r+2}{2r+1}} n^{-\frac{2r}{2r+1}} V^{\frac{2}{2r+1}} \\
& \cdot \max\left\{ 1, V^{\frac{2r-1}{2r+1}} \right\} \Big).
\end{aligned}
\end{equation}
\end{theorem}
\begin{remark}
Theorem~\ref{theorem2} generalizes Theorem 2 in \cite{sadhanala2019additive} to the quantile regression setting, with a difference that our upper bound contains an extra term.
This is the factor 
$\max\set{1, V^{(2r-1)/(2r+1)}},$  
which can go to infinity if $V$ grows to infinity. However, under the canonical scaling $V^* = \gO(1)$ one can choose $V = \gO(1)$ as well and thus the above term is also $\gO(1)$. The factor $d^{(2r+2)/(2r+1)}$ compared to Theorem 2 in \cite{sadhanala2019additive} is $d$, and for large $r$, these two terms tend to be the same, making our rate essentially minimax up to log factors.  
\end{remark}

\section{BACKFITTING ALGORITHM}
In this section, we describe the algorithm for fitting our method. The algorithm is based on the idea of backfitting, which was introduced in \cite{hardle1993backfitting} for additive models. Two algorithms closely related to backfitting for additive models are the alternating least squares and alternating conditional expectations methods, as mentioned in \cite{van1983non} and \cite{breiman1985estimating}. Other works that have explored backfitting idea include \cite{buja1989linear, nielsen2005smooth, ravikumar2009sparse,petersen2016fused,tibshirani2017dykstra}.

To solve the original problem~\eqref{eq70}, we use the backfitting approach described in Algorithm~\ref{alg-backfitting}. Specifically, the algorithm cycles over $j = 1, \dots, d$, and at each step updates the estimate for component $j$ by applying univariate quantile trend filtering to the $j$th
partial residual (i.e., the current residual excluding component j). The univariate quantile trend filtering utilizes the functions described in  \cite{brantley2020baseline}.

\raggedbottom 




\begin{minipage}{0.85\textwidth}
    \begin{algorithm}[H]
       \caption{Backfitting for quantile additive trend filtering}
       \label{alg-backfitting}
       \begin{algorithmic}[1]
           \STATE {\bfseries } \textbf{Inputs:} Responses $Y^i \in \R$ and input points $X^i \in \R^d$, $i = 1, \dots, n$, and set a value for $\lambda>0$.
           \STATE Set $t=0$ and initialize $\theta_j^{(0)} = 0$, for $j = 1, \dots, d$.
           \FOR{$t = 1, 2, 3, \dots$ (until convergence)}
         \FOR{$j=1,2,\cdots,d$}
                   \STATE (i) Set response $u_j^i$, $u_j^i = Y^i -  \sum_{l<j} \theta^{i(t)}_l - \sum_{l>j} \theta^{i(t-1)}_l$, for $i = 1, \dots, n$.
                   \STATE (ii) $ \theta_j^{(t)} = \underset{\theta_j}{\arg\min} \sum_{i=1}^n \rho_{\tau}(u_j^i - \theta^i_j) +  \lambda \norm{D^{(X_j,r)}_nS_j\theta_j}_1, \text{subject to }  \mathbbm{1}^{\top} \theta_j = 0.$
               \ENDFOR
           \ENDFOR
           \STATE Return $\widehat \theta_j$, $j = 1, \dots, d$ (parameters $\theta_j^{(t)}$ at convergence).
       \end{algorithmic}
    \end{algorithm}
  \end{minipage}

\subsection{Backfitting with ADMM}
The main computational burden of Algorithm~\ref{alg-backfitting} is Line 6. We tackle this by using the alternating directions method of multipliers (ADMM)  algorihtm, see  \cite{boyd2011distributed}, as follows.  
We let 
$ 
u_j^i = Y^i -  \sum_{l<j} \theta^i_l - \sum_{l>j} \theta^i_l,
$ 
then, we solve 
\beq 
\label{73}
&\min_{\theta_j \in \R^n } \sum_{i=1}^n  \rho_{\tau}(u_j^i - \theta^i_j) + \lambda \norm{D^{(X_j,r)}_nS_j\theta_j}_1, \\
&\text{subject to}\quad \boldsymbol{1}^{\top} \theta_j =0.
\beq 

How to solve \eqref{73} is given in Section~\ref{app_backfitting_update} of Appendix.

Algorithm~\ref{alg-backfitting} is equivalent to block coordinate descent (BCD), also called exact blockwise minimization,
applied to Problem \eqref{eq70} over the coordinate blocks $\theta_j$, $j = 1,\dots, d$.
To summarize the broader theoretical framework, a general treatment of the Block Coordinate Descent (BCD) method is given by \cite{tseng2001convergence}. Since Equation \eqref{eq70}
decomposes into smooth plus separable terms, which satisfies a convex criterion of BCD, see Appendix \cref{nonconvex0} for details. Then it immediately holds that the iterates of our algorithm converges to the minimal point of the objection function. 
Hence, our algorithm for quantile additive trend filtering is guaranteed to find the minimizer.

\subsection{Computational Complexity}
\label{complexity}


In Algorithm~\ref{alg-backfitting}, line 3, the outer loop usually takes $20$ iterations to converge. 
In each iteration, the backfitting process is conducted, involving $d$ distinct fits of quantile trend filtering. For each fit of quantile trend filtering, the ADMM procedure in Section~\ref{app_backfitting_update} of the Appendix requires three updates (primal, dual, and slack). In the worst case, each update can be computed in \( O(n) \) time (treating \( r \) as a constant), leading to an overall ADMM complexity of \( O(nm) \), where \( m \) denotes the maximum number of ADMM iterations. 
There are $d$ components, therefore one full iteration of
standard backfitting ADMM updates can be done in linear time $O(dnm)$. In practice, we find $m$ tends to be small, typically around 10. Thus, the overall complexity of Algorithm~\ref{alg-backfitting} amounts to $O(dn)$. The details about the calculation of $O(dn)$ can be found in Section~\ref{app_backfitting_update} of Appendix.  The software and hardware setting for Algorithm \ref{alg-backfitting}  can be found in Appendix~\ref{computation_setting}.




\subsection{Examples}
We conclude this section with two examples, visualized in Figure~\ref{fig:4} and Figure~\ref{fig:3}, illustrating the performance of our quantile additive trend filtering algorithm. Full details are given in Section~\ref{app_algorithm_example} of the Appendix. 

\begin{figure}[htb]
    \setlength{\abovecaptionskip}{-15pt} 
    \centering
    \begin{subfigure}[b]{1\textwidth}  
        \includegraphics[width=\linewidth, height=1.5in]{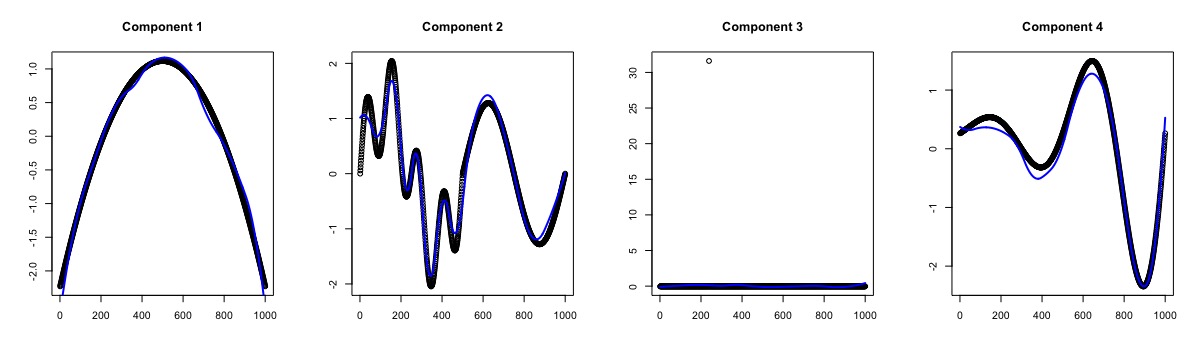}
        \label{fig:4a}
    \end{subfigure}
    \caption[]{Component estimates of Quantile Additive Trend Filtering with $\tau = 0.5$ are plotted in blue, and the true component functions $f_{0j}(x) = a_jg_j(x) - b_j$ in black, with $a_j$ and $b_j$ chosen such that $f_{0j}$ has an empirical mean of zero and empirical norm $\|f_{0j}\|_n = 1$. For this scenario, $g_1(x) = \frac{1}{2}x^2 $, $g_2(x) = \frac{3}{2} \sin(4 \pi x) + \mathbbm{1}_{x \leq \frac{1}{2}} \cdot \sin(16 \pi x)$, $g_3$ is a dummy dimension (where only 1 randomly assigned point takes non-zero value), and $g_4(x) = e^{3 x} \sin(4 \pi x)$. }
    \label{fig:4}
\end{figure}


\begin{figure}[htb]  
    \captionsetup[subfigure]{aboveskip=3pt,belowskip=-0pt,font=small}
    \centering
    \begin{subfigure}[b]{0.3\linewidth}  
        \includegraphics[width=\linewidth,height=1.5in]{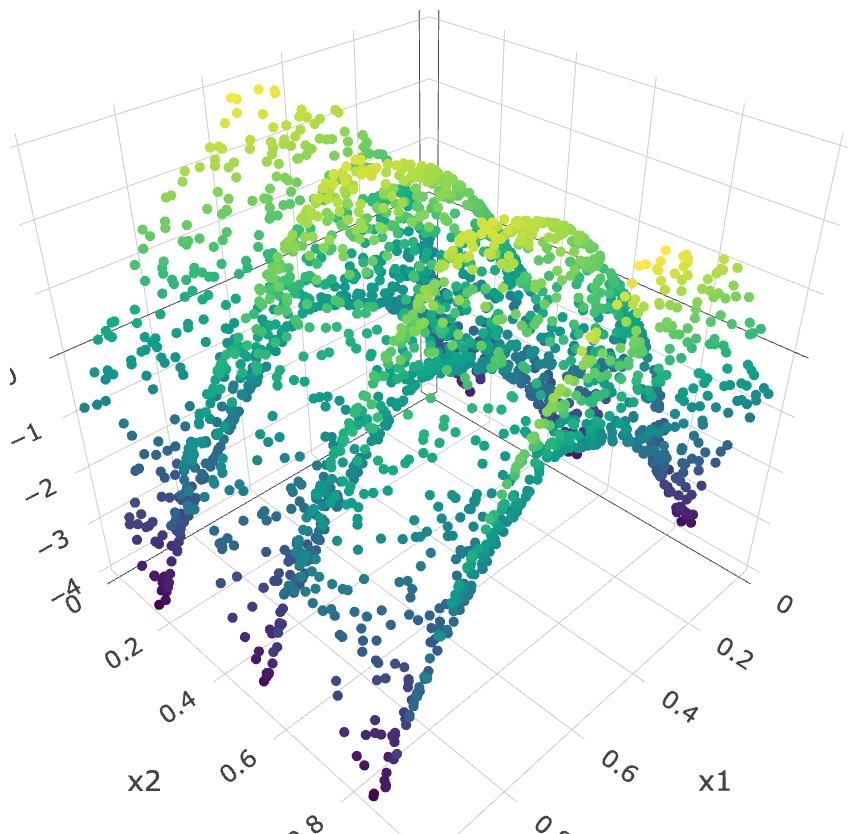}
        \caption{ $\tau = 0.5$, True Signal}
        \label{fig:3a}
    \end{subfigure}  
    \begin{subfigure}[b]{0.3\linewidth}  
        \includegraphics[width=\linewidth,height=1.5in]{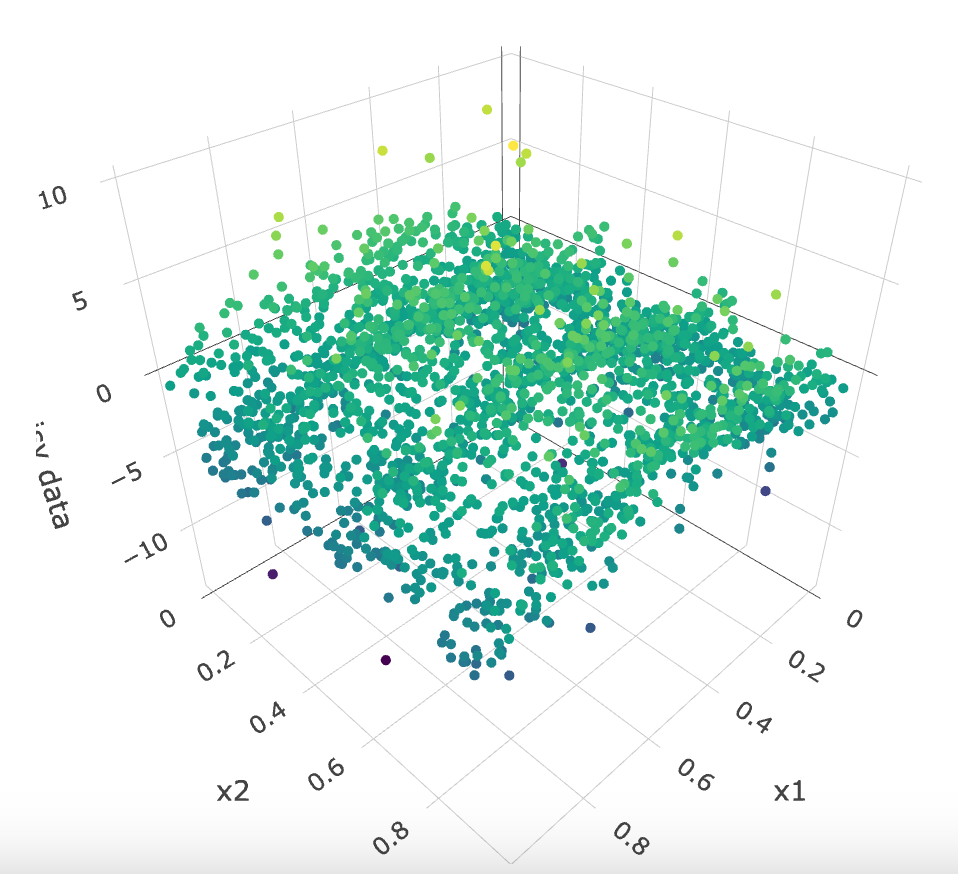}
        \caption{Noisy Data}
        \label{fig:3b}
    \end{subfigure}
    \begin{subfigure}[b]{0.3\linewidth}  
        \includegraphics[width=\linewidth,height=1.5in]{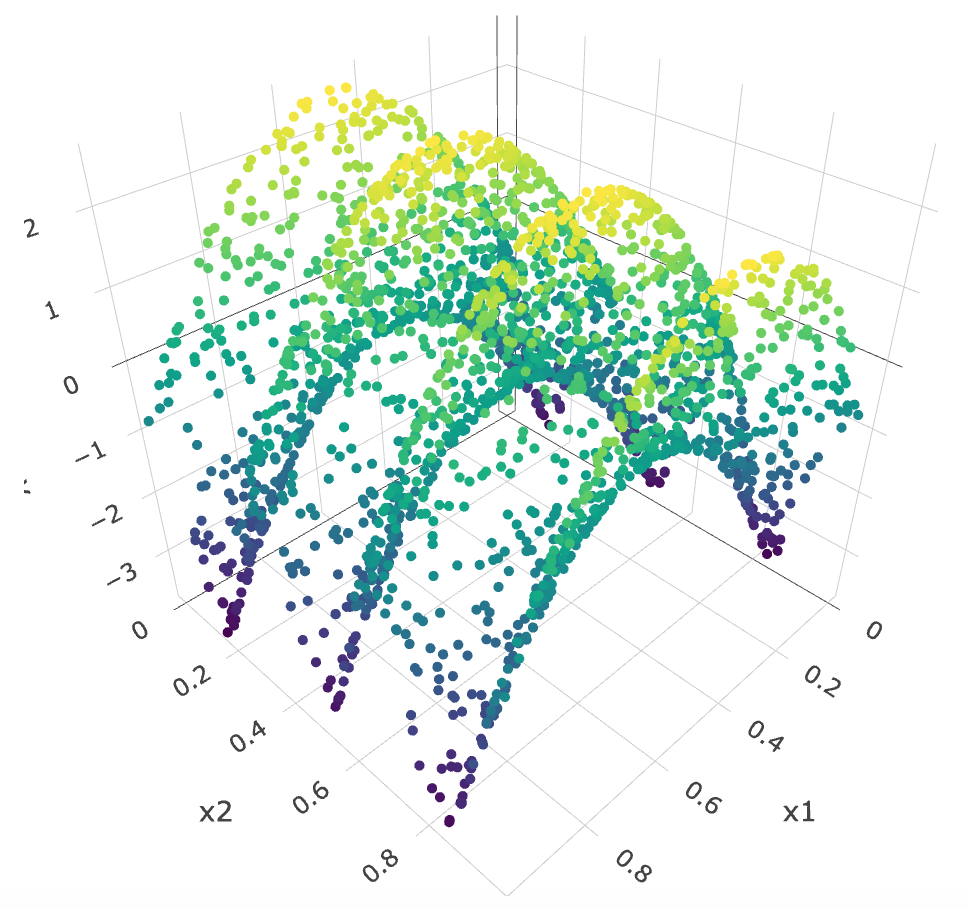}
        \caption{ Reconstructed Signal}
        \label{fig:3c}
    \end{subfigure}
    \caption[]{Figure \subref{fig:3a} plots true function $f_{0j}(x) = a_jg_j(x) - b_j$ on $0.5$ quantile, where $g_1(x) = \frac{1}{2} \cos(6 \pi x) + 0.1)$ and $g_{2}(x) = -(x - \frac{1}{2})^2$. Figure \subref{fig:3b} plots the real data with heavy tailed noise added. Figure \subref{fig:3c} plots QATF estimators $\widehat f$ on this Scenario, for a reconstruction of the true signal. }
    \label{fig:3}
    \vspace{-10pt}
\end{figure}


\section{SIMULATED EXPERIMENTS}
\label{experiment}

In this section, we conduct a comparison study to evaluate the performance of quantile additive trend filtering (QATF) of orders $r = 1$ and $r = 0$, denoted as QATF1 and QATF0 respectively, against existing benchmark methods. 

\subsection{Benchmark Methods}
We compared against the usual mean regression penalized additive trend filtering models of orders $r = 1$ and $r = 0$ (ATF1 and ATF0), as well as quantile smoothing splines (QS) \citep{koenker1994quantile}. Both ATF and QS can be implemented with minor modifications to step (ii) in Algorithm~\ref{alg-backfitting}. Specifically, ATF is implemented by replacing the check function with the least squares loss, following the approach of \cite{sadhanala2019additive}, and QS simply substitutes the trend filtering penalty with the smoothing spline penalty. We implement the univariate fits using the R packages \texttt{trendfilter} \citep{tibshirani2011solution} for ATF, and \texttt{fields} \citep{nychka2017fields} for QS. 

\subsection{Experimental Setup}
We sample $n \in \{500, 1000, 2500\}$ points in $d = 10$ dimensions, assigning inputs $X_j = (X^1_j \dots X^n_j)$, where $X_j^i$ is sampled from a $d$-dimensional uniform distribution within the interval $[0,1]^d$. 

The component functions are defined as $f_{0j} = a_jg_j - b_j$ for $j = 1, \dots, d$. For $g_j(x)$, we use sinusoids with Doppler-like spatially varying frequencies:
\begin{equation}
\label{dropper}
g_j(x) = \sin\left(\frac{2\pi}{(x+0.1)^{j/10}}\right), \quad j = 1, \dots, 10.
\end{equation}
The true data is then generated  as:
\[
Y^i = \sum_{j=1}^d f_{0j}(X^i_j) + \epsilon^i, \quad i = 1, \dots, n.
\]
This structure allows for significant heterogeneity both within and between the component functions. The errors $\{\epsilon^i\}_{i=1}^n$ are independent with $\epsilon^i \sim F^i$, varying across different scenarios.

For feature selection, we consider values of $\lambda$ such that $\log_{10}(\lambda)$ is in a grid of 50 evenly spaced points in $[-2 + r, 4 + r]$ for QATF0 and QATF1, $[-7, 3]$ for ATF0 and ATF1, and $[-16, 0]$ for QS. We then choose the $\lambda$ to be the value that minimizes the estimator $\widehat f$'s mean squared error (MSE), $\frac{1}{n}\sum_{i=1}^n(\widehat f - f_0)^2$, with $f_0$ representing the true vector of quantiles. The grid size was chosen to experimentally encompass all variability in the MSE resulting from changes in $\lambda$, and we chose to use MSE for feature selection in order to ensure fair competition with other methods in the comparison experiments. Feature selection via Cross-Validation is used for the experiments on real data in Section 6.

\begin{table}[ht]
\centering
\caption {Average mean squared error, $\frac{1}{n}\sum_{i=1}^n(f^i_0 - \widehat f^i)^2$, averaging over 50 Monte Carlo simulations for the different methods considered. The best MSE is listed with bold text. S = Scenario.}
\begin{tabular}{rrrrrrrr}
\hline
n    & S & $\tau$ & QATF1                & QATF0                & QS                   & ATF1                 & ATF0               \\
\hline 
500  & 1        & 0.5 & 0.9831          & 1.0136          & 1.0738          & \textbf{0.5800} & 0.6194        \\
1000 & 1        & 0.5 & 0.6132          & 0.7426         & 0.7304           & \textbf{0.4011} & 0.4583        \\
2500 & 1        & 0.5 & 0.3111          & 0.4096          & 0.4325             & \textbf{0.2055}  & 0.2751        \\
\hline
500  & 2        & 0.5 & \textbf{2.0341} & 2.3507          & 2.2444          & 165.2800        & 2794.1800      \\
1000 & 2        & 0.5 & \textbf{1.3132}  & 1.5889          & 1.5223          & 1412.6500       & 78147.2000    \\
2500 & 2        & 0.5 & \textbf{0.6243} & 0.7877          & 0.8078	          & 33.5683        & 2623.1000      \\
\hline 
500 & 3        & 0.5 & \textbf{1.1926} & 1.5098          & 1.3299          & 4.1947          & 4.0913        \\
1000  & 3        & 0.5 & \textbf{0.7359} & 0.9847                & 0.8901                &      3.9147   &   3.8723      \\ 
2500 & 3        & 0.5 & \textbf{0.3468} & 0.4963          & 0.4965         & 3.6066          & 3.4550       \\
\hline 
500  & 4        & 0.5 & \textbf{0.9751} & 1.1981                & 1.0976                & 3.5949          & 5.4504        \\
1000 & 4        & 0.5 & \textbf{0.5600} & 0.7544          & 0.6707          & 1.0105          & 1.1889        \\
2500 & 4        & 0.5 & \textbf{0.2391} & 0.3647                & 0.3604                & 0.6629          & 0.9073       \\ 
500  & 4         & 0.8 & \textbf{1.7977} & 2.4720           & 1.9304           & NA          & NA        \\
1000 & 4         & 0.8 & \textbf{1.0583} & 1.4232          & 1.2408          & NA          & NA        \\
2500 & 4         & 0.8 & \textbf{0.5405} & 0.7116          & 0.6917         & NA          & NA        \\
500  & 4         & 0.2 & \textbf{1.7558} & 2.3276          & 1.7912          & NA          & NA        \\
1000 & 4         & 0.2 & \textbf{1.0611} & 1.4834          & 1.2454          & NA          & NA        \\
2500 & 4         & 0.2 & \textbf{0.5364} & 0.7019          & 0.6888         & NA          & NA         \\ \hline 
500  & 5        & 0.5 & 1.6479 & 1.6549          & \textbf{1.6190}          & 1.8772          & 1.6511        \\
1000 & 5        & 0.5 & 1.0925 & \textbf{0.8612}          & 1.1024          & 1.2632          & 0.9551        \\
2500 & 5        & 0.5 & 0.6382 & \textbf{0.3466}          & 0.6963         & 0.7755          & 0.4624        \\
\hline 
500  & 6         & 0.5    & 0.6457                     & 1.0902                     & 0.9807                  & \textbf{0.6412}                & 0.6713                                   \\ 
1000 & 6         & 0.5    & \textbf{0.3373}                 & 0.7372                     & 0.6635                  & 0.3586                    & 0.4407                                    \\ 
2500 & 6         & 0.5    & 0.2175                     & 0.4265                     & 0.4308                  & \textbf{0.2078}                & 0.2885                                     \\ \hline
500  & 7         & 0.5    & \textbf{1.9526}                 & 2.0708                     & 2.0007                  & 3.5986                    & 3.5436                                    \\ 
1000 & 7         & 0.5    & \textbf{1.3192}                 & 1.5304                     & 1.4419                  & 2.0597                    & 2.3514                                   \\ 
2500 & 7         & 0.5    & \textbf{0.7164}                 & 0.8174                     & 0.8542                  & 2.9768                    & 4.3687                    \\ \hline

\end{tabular}
\vspace{-2pt}
\label{tbexp}
\end{table}

\subsection{Scenarios and Results}
We examine the performance of QATF and benchmark methods across different scenarios:\\
\textbf{Scenario 1: Normal Errors} \leavevmode \\
In this scenario, the distributions $F^i$ are taken as $N(0, 1)$. Given the normal errors and varying smoothness, this scenario is ideally suited for ATF. As shown in Table~\ref{tbexp}, ATF1 performs the best, with ATF0 closely following. QATF1 and QATF0 both outperform QS, likely due to splines' limitations in adapting to heterogeneous smoothness.\\
\textbf{Scenario 2: Cauchy Errors} \leavevmode \\
Here, the distributions $F^i$ are taken as $\text{Cauchy}(0, 1)$, where the errors have no mean. As expected, ATF1 and ATF0 perform poorly in this scenario. Conversely, the quantile methods show robustness, with, consistent with the results from Scenario 1, QATF1 and QATF0 slightly outperforming QS.\\ 
\textbf{Scenario 3: Log-Normal Errors} \leavevmode \\
For this scenario, the distributions $F^i$ are taken as $\text{log-normal}(0, 1)$, which is both right-skewed and heavy-tailed. In real data applications, this is another typical example of where estimating the median is more useful than estimating the mean. We see that the quantile methods perform well at this task, with QATF1 performing the best.  \\
\textbf{Scenario 4: Heteroscedastic $t$ Errors} \leavevmode \\
In this scenario, the distributions $F^i$ are taken as $t(2)$, but with 
$\epsilon^i = i^{1/2}/n^{1/2}v^i$, where $v^i$s are independent draws from $F^i$. The performance of the methods is similar to Scenario 2, but less extreme. We also utilize this scenario to demonstrate fits on other quantiles. This is only possible with a quantile fitting method, so ATF1 and ATF0 are represented by NAs in Table \ref{tab:test_metrics}. \\
\textbf{Scenario 5: Piecewise Constant Components, $t$ Errors} \leavevmode \\
For this scenario, we redefine the function $g_j(x)$ as:
\[
g_j(x) = 
\begin{cases} 
1 & \text{if } x \in [b_1, b_2) \\
-1 & \text{if } x \in [b_2, b_3) \\
\vdots & \vdots \\
(-1)^{j} & \text{if } x \in [b_{j+1}, b_{j+2})
\end{cases}, 
\]
where $b_1 = 0$, $b_{j+2} = 1$, and $b_2 \dots b_{j+1}$ are squares of $j$ evenly spaced breakpoints in $(0, 1)$. This maintains heterogeneity within and between components as in previous scenarios, but introduces a piecewise-constant structure. The distributions $F^i$ are taken as $t(3)$. This is a scenario where we expect the k = 0 models to perform the best, which is indeed the case; QATF0 is the premier method, with ATF0 outperforming QATF1 and QS for second place. \\
\textbf{Scenario 6: Time Series, Normal Errors} \leavevmode \\
For this scenario, we take the function as in \eqref{dropper},  
but now we allow series correlation. We have an autoregressive model the error $\epsilon^i$ depends on the previous $\epsilon^{i-1}$. Specifically, for $\tau = 0.5$, we consider.
\begin{align*}
\epsilon^i &= \frac{0.5\epsilon^{i-1} + v^i}{0.5^2 + 1^2},
v^i \sim F^i, \text{ where } F^i \text{ as } N(0,1).
\end{align*}
With the normal errors, ATF1 performs the best, though by only a slim margin over QATF1. \\
\textbf{Scenario 7: Time Series, $t$ Errors} \leavevmode \\
For this, we consider the same structure as Scenario 6, but with \( F^i \) as \( t(2) \). As expected, in the transition to heavy tails, the quantile methods are more effective, with QATF1 demonstrating the strongest performance. 

Overall, the QATF estimator performs well across all scenarios, and particularly outperforms ATF under heavy-tailed errors, thereby supporting our theoretical findings. We also provide simulation results for $\tau = 0.2$ and 0.8 for Scenarios 1, 2, 3, 5, and 6 in the Table~\ref{table2_more_result} of Appendix. We only consider \( \tau = 0.5 \) for Scenario 7 due to the difficulties in constructing the ground truth data when \( \tau \neq 0.5 \)

\section{WORLD HAPPINESS DATA}
\label{app_real_data}

In this section, we demonstrate the effectiveness of QATF in real data applications by performing statistical inference on the 2024 World Happiness Data. We analyze the conditional relationships between a country-level happiness index, derived from average responses to the Cantril life ladder question in the Gallup World Polls \cite{cantril1965pattern,helliwell2024world}, and nine predictors, such as gross national income, sourced from the 2023 World Happiness Report and World Bank Development Indicators \cite{worldbank2012world}.

Our dataset choice is inspired by \cite{petersen2016fused}, who compared the fused lasso additive model (FLAM) to generalized additive models (GAM) \citep{hastie2013gam} using 2013 Gallup World Polls data with twelve predictors. A key benefit of our quantile method is the ability to construct prediction intervals and assess coverage without domain-specific knowledge.

To assess QATF, we conduct 10-fold cross-validation on 113 countries with complete data, split into training and testing sets. We use $r = 0$, making this an even more direct extension of \cite{petersen2016fused} to the quantile case, as FLAM is equivalent to Additive Trend Filtering when $r = 0$. We evaluate coverage for 60\%, 80\%, and 90\% prediction intervals, deeming a prediction covered if it lies between the estimated quantiles, akin to swapping the lower and upper quantile estimates in the cases where they intersect \cite{chernozhukov2010quantile, vardi2000multivariate}. For example, with a 90\% prediction interval, coverage ($\alpha$) is:
\beq 
\label{coverage}
\alpha = \frac{1}{n} \sum_{i=1}^n \ind{\hat{q}^i_{0.05} \leq Y^i_{\text{test}} \leq \hat{q}^i_{0.95}},
\beq 
where $\hat{q}^i_{0.05}$ and $\hat{q}^i_{0.95}$ are predicted quantile for data $Y^i_{\text{test}}$ based on $X^i_{\text{test}}$. Tuning parameters for the 0.05, 0.1, 0.2, 0.5, 0.8, 0.9, and 0.95 quantiles are selected via 10-fold cross-validation. Results in Table \ref{tab:test_metrics} demonstrate QATF’s superior coverage compared to quantile smoothing splines.






\begin{table}[h]
    \centering
    \caption{Coverage $\alpha$ of various prediction intervals, averaged over 10 train-test splits.}
    \begin{tabular}{@{}lcc@{}}
        \toprule
        Interval Width & QATF & QS \\ 
        \midrule
        90\% &  0.894 & 0.798 \\ 
        80\% & 0.790 & 0.716 \\ 
        60\% & 0.634 & 0.602 \\ 
        \bottomrule
    \end{tabular}
    \label{tab:test_metrics}
\end{table}

Finally, we examine the component-wise results, demonstrating the interpretability advantages of additive models. The 0.1, 0.5, and 0.9 quantiles are trained on the entire dataset, and the component-wise results are plotted in Appendix \ref{sec:component_plots}, Figure \ref{fig:component_plots}. With these individual component results, we can see, for example, that conditional on the other predictors, our model finds that Log-GDP per Capita is associated with increased happiness index scores (which is consistent with the findings in \cite{petersen2016fused}). Figure \ref{fig:6} displays this association. When considering the quantiles specifically, we see that construction of prediction intervals relies almost exclusively on the Log-GDP per Capita and Perceptions of Corruption predictor variables, since the other variables demonstrate little to no difference in their component fits between the 0.1 and 0.9 quantiles.

\begin{figure}[htbp]
    \centering
    \includegraphics[scale=0.5]{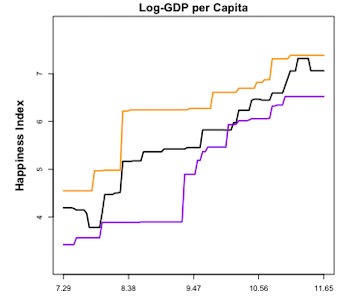}
    \caption{Log-GDP per Capita component fit in Quantile Additive Trend Filtering with $\tau = 0.1, 0.5, \text{and } 0.9$, plotted in purple, black, and orange, respectively.}
    \label{fig:6}
\end{figure}

\section{CONCLUSION}

In total, we analyzed risk bounds for quantile additive trend filtering in both fixed and growing input dimensions. We also proposed and validated a practical algorithm using dimension-wise backfitting, effectively addressing challenges in additive models with heavy-tailed distributions.


Future work could extend this method to high-dimensional settings, incorporating sparsity penalties for enhanced performance when input dimensions are comparable to or larger than the sample size.

\clearpage
\bibliographystyle{apalike}
\bibliography{references}

\clearpage 
\section*{Checklist}



 \begin{enumerate}

 \item For all models and algorithms presented, check if you include:
 \begin{enumerate}
   \item A clear description of the mathematical setting, assumptions, algorithm, and/or model. [Yes]
   \item An analysis of the properties and complexity (time, space, sample size) of any algorithm. [Yes]
   \item (Optional) Anonymized source code, with specification of all dependencies, including external libraries. [Yes/No/Not Applicable]
 \end{enumerate}

 \item For any theoretical claim, check if you include:
 \begin{enumerate}
   \item Statements of the full set of assumptions of all theoretical results. [Yes]
   \item Complete proofs of all theoretical results. [Yes]
   \item Clear explanations of any assumptions. [Yes]     
 \end{enumerate}

 \item For all figures and tables that present empirical results, check if you include:
 \begin{enumerate}
   \item The code, data, and instructions needed to reproduce the main experimental results (either in the supplemental material or as a URL). [Yes]
   \item All the training details (e.g., data splits, hyperparameters, how they were chosen). [Yes]
         \item A clear definition of the specific measure or statistics and error bars (e.g., with respect to the random seed after running experiments multiple times). [Yes]
         \item A description of the computing infrastructure used. (e.g., type of GPUs, internal cluster, or cloud provider). [Yes]
 \end{enumerate}

 \item If you are using existing assets (e.g., code, data, models) or curating/releasing new assets, check if you include:
 \begin{enumerate}
   \item Citations of the creator If your work uses existing assets. [Yes]
   \item The license information of the assets, if applicable. [Yes]
   \item New assets either in the supplemental material or as a URL, if applicable. [Yes]
   \item Information about consent from data providers/curators. [Not Applicable]
   \item Discussion of sensible content if applicable, e.g., personally identifiable information or offensive content. [Not Applicable]
 \end{enumerate}

 \item If you used crowdsourcing or conducted research with human subjects, check if you include:
 \begin{enumerate}
   \item The full text of instructions given to participants and screenshots. [Not Applicable]
   \item Descriptions of potential participant risks, with links to Institutional Review Board (IRB) approvals if applicable. [Not Applicable]
   \item The estimated hourly wage paid to participants and the total amount spent on participant compensation. [Not Applicable]
 \end{enumerate}

 \end{enumerate}

\newpage

\onecolumn
\begin{center}
    \textbf{\Large Appendix}
\end{center}

\appendix
\section{BROADER IMPACT AND ETHICAL STATEMENT}

This paper contributes to the ongoing development of Machine Learning, adhering to standard ethical guidelines in research. Our work, aligning with common advancements in the field, does not present any unique ethical dilemmas or societal consequences that require special emphasis. We recognize the importance of responsible use and development of Machine Learning technologies and their potential impact on society. However, our research does not delve into areas that might raise significant ethical or societal concerns. We commit to ethical research practices and consider the broader implications of our work to be aligned with the typical advancements in Machine Learning.
\section{CONTRIBUTION OF PROOF TECHNIQUES}
\label{app_proof_contribution}
Our proof process establishes the Huber type loss (defined in \eqref{huber loss}) is upper bounded by the localized Rademacher width, as delineated in Lemmas~\ref{lemma12} to Theorem~\ref{theorem8}.  We then establish the upper bound of local Rademacher width in terms of Huber type metric by the local Rademacher width in terms of $\ell_2$ type metric, which needs us 
to convert the $\ell_2$ norm in local Rademacher width to the Huber loss function in its local Rademacher width. This conversion is detailed in \ref{definition4} to \ref{lemma:boudingT2}, with specific emphasis on Lemmas~\ref{lemma18}, \ref{lemma20}, \ref{lemma21}, \ref{lemma22}, and is culminated in Equation~\eqref{eq1172} of Lemma~\ref{lemma:boudingT2}. In addition,  our proof executes this conversion without amplifying the dimensionality. In particular, this is achieved by orthogonally projecting the estimation error into the null space of the trend filtering operator and its orthogonal complement, while controlling for dimensionality, as demonstrated in Lemmas ~\ref{proposition2growingd}, \ref{lemma23growingd}, \ref{lemma30}, and \ref{lemma29}.  As a result, we maintain a minimax rate for arbitrary errors, irrespective of moment conditions. This part of our proof technique represents one of the significant technical contributions of this work.

\section{PROOF SKETCH}
Below we summarize our proofs for Theorems \ref{theorem1} and \ref{theorem2}, with the detailed proofs provided in the Appendix. 

\textbf{Step 1}. The first step in our analysis is to show that the convergence rate of our estimator depends on a local Rademacher complexity, which involves a constraint enforced by the total variation and one constraint consisting of a ball of the form \(\{ \delta : \Delta_n^2(\delta) \leq r\}\). 

\textbf{Step 2}. The geometry of the set \(\{ \delta : \Delta_n^2(\delta) \leq r\}\) is extremely complicated because the function \(\Delta_n(\cdot)\) does not satisfy the triangle inequality. To deal with this, we use the formula \(\|\delta\|_2^2 \leq \max\{\|\delta\|_{\infty}, 1\}\Delta_n^2(\delta)\). The idea behind this is to replace the set \(\{ \delta : \Delta_n^2(\delta) \leq r\}\) with a set of the form \(\{ \delta : \|\delta\|_2^2 \leq \tilde{r}\}\) for an appropriate \(\tilde{r} > 0\). However, to do this we must control \(\|\delta\|_{\infty}\) for \(\delta\) in the set \(\{\delta\in \mathcal{F}^{(r)}(V) - f_0: \Delta_n^2(\delta) \leq t^2\}\). This in itself is challenging, and it is done by orthogonally projecting the vectors \(\delta(X)\) into the null space of the trend filtering operator resulting in \(p_j\) and projecting onto the orthogonal complement leading to \(q_j\), where $j=1,\dots, d$. This has to be done for each dimension $j$. 

\textbf{Step 3}. Controlling the \(\ell_{\infty}\) of the projections onto both spaces is nontrivial, but we achieve this in Lemmas ~\ref{lemma18}, \ref{lemma19} and \ref{lemma21}. The next step is to obtain bounds on  
\(\|p\|_{\infty}\) and \(\|q\|_{\infty}\), where $p = p_1 + \dots + p_d$ and $q = q_1 + \dots + q_d$, then obtain the \(\Delta_n^2(p)\) and \(\Delta_n^2(q)\) afterwards. Lemmas ~\ref{proposition2}, \ref{lemma22}, and \ref{lemma:boudingT2} derive these steps. The higher-order term \(\Delta_n^2(q)\) would need conversion to \(\ell_2(q)\) to obtain the optimal rate. Lemma \ref{lemma:boudingT2} deals with that.

\section{ASSUMPTIONS FOR THEOREM~\ref{theorem1}}
\label{app_assumptions_thm1}
Before providing those main results, we give some detail description of assumptions as needed.



\begin{assumption}
\label{assumption1}
    Assume the inputs are contained in $[0,1]$ in each dimension such that 
    \beq 
    0\le X^i_j \le 1,  \quad i = 1, \dots, n, \quad j = 1, \dots, d. 
    \beq  Then we 
assume the maximum width $w$ for any consecutive $X^i_j$ satisfies, with high probability, that $w  = O(\frac{\log n}{n})$, where $w$ is given by \eqref{w_assump},  
\begin{equation}
\label{w_assump}
w = \max_{\substack{i=1,\dots, n-1\\ j = 1, \dots, d}} \abs{X^{(i)}_j - X^{(i-1)}_j},
\end{equation} where $X^{(1)}_j < X^{(2)}_j < \cdots < X^{(n)}_j$.
\end{assumption}

\begin{remark}
\label{remark_data_domain}
The $[0,1]$ assumption is minor, as we can always scale the inputs.  The same constraint is used in the analysis in \cite{sadhanala2019additive}. The
assumption for $w$ would be satisfied with probability approaching 1 by Lemma 5 in \cite{wang2015trend}, for $X^1_j, \dots, X^n_j$ that are i.i.d. from a 
uniform distribution $[0,1]$. In the literature of nonparametric statistics, it is common to assume covariates $X^i \in \R^d$ that are uniformly distributed on $[0,1]^d$, for instance, see Definition 3.4 of \cite{gyorfi2002distribution}. The latter model is actually a particular case of the class of models that we consider.

\end{remark} 







\begin{assumption}
\label{assumption4}
For the $f_0$ in \eqref{eq45}, we assume that $ 
\sum_{i=1}^n f_{0j}(X^i_j) = 0, \quad \text{for } j = 1 , \dots,  d.
$ 
\end{assumption}
Our Assumption~\ref{assumption4} is consistent with Assumption B1 by \cite{sadhanala2019additive}, and it is required for identifiability. 
\begin{assumption}
\label{assumptionA}
There exists a constants $L > 0$ and $\ubar{f} > 0$ such that for any
function $\delta: [0,1]^d \rightarrow \R $ with $\norm{\delta}_{\infty} \le L$ we have that
\beq 
\abs{F_{Y^i|X^i}( f^i_0 + \delta^i) - F_{Y^i|X^i}( f^i_0)} \ge \ubar{f} \abs{\delta^i},
\beq     
where $F_{Y^i|X^i}$ is the conditional CDF of $Y^i$, and $\delta^i = \delta(X^i)$. Note for a given quantile level $\tau$, for the input $X^i$, we define the evaluation of $f_0$ at $X^i$ as $f^i_0 = F^{-1}_{Y^i|X^i}(\tau)$, where $F^{-1}_{Y^i|X^i}(\tau)$ represents the $\tau$ conditional quantile of $Y^i$ given $X^i$.
\end{assumption} 

Similar conditions are assumed by Assumption A of \cite{madrid2022risk} and Assumption 4 of \cite{shen2022estimation}. 
Assumption \ref{assumptionA} ensures that the conditional quantile of $Y^i$ given $X^i$ is uniquely defined and there is a uniform linear growth of the CDF around a neighborhood of the quantile.

\section{SUPPLEMENTARY LEMMAS FOR THEOREM~\ref{theorem1}}
To complete our proofs for the theorems, we provide additional definitions.
\begin{definition} 
The $r$th order $TV$-ball of radius $V$ is defined as: 
\label{def:J}
\beq 
\label{eq197}
B_{TV^{(r)}}(V)  = \set{f: TV^{(r)}(f)  \le V}. 
\beq
We also denote
\beq 
\label{ball_tv_d}
B_{TV^{(r)}}^d(V) = \bigg\{\sum_{j=1}^d f_j: \sum_{j=1}^d TV^{(r)}(f_j) \le V, j = 1, \dots, d\biggr\}. 
\beq

The $r$th order trend filtering operator norm ball of radius $V$ as 
\beq 
\label{def:ball of trend filtering}
B_{D^{(X,r)}_n}(V) = \set{\theta \in \R^n: \norm{D^{(X,r)}_n(\theta)}_1 \le V}.
\beq    

We abbreviate
$B_n(V)$ for the $\|\cdot\|_n$-ball of radius $V$, $B_{L_2}(V)$ for
the $\|\cdot\|_2$-ball of radius $V$, and $B_\infty(V)$ for the
$\|\cdot\|_\infty$-ball of radius $V$.
\end{definition}

\begin{definition}
\label{definition213}
Let $Y\in \R^n$ be a vector of independent random variables and for a given quantile level $\tau \in (0,1)$.    
For any finite function class $\gF$, we define the Empirical Rademacher width as
\bea 
\gR_n(\gF) = \ept{\sup_{f \in \gF} \frac{1}{n}\sum_{i=1}^n \xi^i f^i(X^i)\mid \set{X^i}_{i=1}^n},
\bea    
and its Rademacher complexity 
\bea 
\gR(\gF) = \ept{\sup_{f \in \gF} \frac{1}{n}\sum_{i=1}^n \xi^i f(X^i)}. 
\bea
\end{definition}

\begin{definition}
  For $f \in \gF^{(r)}(V)$ , we define the empirical loss function as 
\beq 
\label{295}
\widehat M(f) = \sum_{i=1}^n \widehat M^i(f), 
\beq 
where each $\widehat M^i(f)$ is defined as 
\beq 
\label{348}
\widehat M^i(f) =  \rho_{\tau}(Y^i  - f(X^i)) - \rho_{\tau}(Y^i  - f_0(X^i)).
\beq   

We also define the population loss function as 
\beq 
\label{305}
M(f) \coloneqq \sum_{i=1}^n  M^i(f), 
\beq where  
\beq 
 M^i(f) \coloneqq  \ept{\rho_{\tau}(Y^i  - f(X^i)) - \rho_{\tau}(Y^i  - f_0(X^i))|X^i}.
\beq
\end{definition}

It is well known that for our model in \eqref{eq45}, conditioned on $X^i$, the true $\tau$-quantile of $Y^i$, $f_0(X^i)$ satisfies  
\beq
\label{eq44}
f_0 = \arg \min_{f\in \gF^{(r)}(V)  }M(f),
\beq where the expectation is taken for $Y^i$ conditioned on $X^i$. 

Note, from \eqref{eq44}, we know $f_0 \in \argmin M(f)$, and $M(f_0) = 0$. 

\begin{definition}
\label{definition3}
The constrained quantile additive estimator defined in \eqref{eq298} will be
\label{def}
    \beq 
\widehat f = \argmin_{f \in \gF^{(r)}(V)} \widehat M(f),
\beq  
\end{definition}

By Definitions in \eqref{eq44} and \eqref{295}, we have 
\beq 
\widehat M(\widehat f) - \widehat M(f ^*) \le 0 
\beq as the basic inequality.

\begin{lemma}
\label{lemma12}
For any $f, f_0 \in \gF^{(r)}(V)$, denote $\delta(X^i) = f(X^i) - f_0(X^i)$, then we have 
    \beq 
M^i( f) - M^i(f_0) &= \int_{0}^{\delta(X^i)} \paren{F_{Y^i|X^i}(f_0(X^i) + z) - F_{Y^i|X^i}(f_0(X^i))}dz.
\beq 
\end{lemma}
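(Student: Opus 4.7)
The plan is to derive this identity via Knight's decomposition of the check function, which is the standard tool for transforming differences of quantile losses into explicit expressions involving the conditional CDF. Specifically, for any real numbers $u$ and $v$, one has the identity
\[
\rho_{\tau}(u - v) - \rho_{\tau}(u) = -v\bigl(\tau - \mathbb{I}(u \le 0)\bigr) + \int_{0}^{v}\bigl(\mathbb{I}(u \le s) - \mathbb{I}(u \le 0)\bigr)\,ds,
\]
with the usual convention that $\int_0^v = -\int_v^0$ when $v < 0$. I would verify this by splitting into the two cases $v \ge 0$ and $v < 0$ and checking directly via the piecewise-linear structure of $\rho_{\tau}$; the verification is purely algebraic.

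Next I would apply this identity with $u = y_i - f^*(x_i)$ and $v = \delta(x_i) = f(x_i) - f^*(x_i)$, so that $u - v = y_i - f(x_i)$. This gives, pointwise in $y_i$,
\[
\rho_{\tau}(y_i - f(x_i)) - \rho_{\tau}(y_i - f^*(x_i)) = -\delta(x_i)\bigl(\tau - \mathbb{I}(y_i \le f^*(x_i))\bigr) + \int_{0}^{\delta(x_i)}\bigl(\mathbb{I}(y_i \le f^*(x_i) + s) - \mathbb{I}(y_i \le f^*(x_i))\bigr)\,ds.
\]

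Now I would take the conditional expectation given $x_i$ on both sides. For the first (linear) term, since $f^*(x_i)$ is the $\tau$-quantile of $y_i$ given $x_i$ and Assumption~\ref{assumptionA} implies continuity of $F_{y_i|x_i}$ in a neighborhood of $f^*(x_i)$, we have $F_{y_i|x_i}(f^*(x_i)) = \tau$, so this term vanishes in expectation. For the integral term, I would exchange expectation and integral by Fubini (the integrand is bounded between $-1$ and $1$ and $\delta(x_i)$ is a fixed conditional constant), yielding
\[
\int_{0}^{\delta(x_i)} \bigl(F_{y_i|x_i}(f^*(x_i) + s) - F_{y_i|x_i}(f^*(x_i))\bigr)\,ds,
\]
which is the claimed expression.

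The only subtle point, and hence the "main obstacle," is the handling of the sign of $\delta(x_i)$ in the integral: for $\delta(x_i) < 0$ one must check that both the sign convention $\int_0^v = -\int_v^0$ in Knight's identity and the sign of the integrand conspire to yield a nonnegative final expression, which is expected since $M_i(f) - M_i(f^*) \ge 0$ by the optimality of $f^*$. The rest is the direct algebraic verification of Knight's identity and an application of Fubini's theorem; both are routine.
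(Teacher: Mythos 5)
Your proposal is correct and follows essentially the same route as the paper: the paper's proof also invokes Knight's identity (citing equation (B.3) of Belloni and Chernozhukov) with $w = y_i - f^*(x_i)$ and $v = \delta(x_i)$, takes conditional expectations, and kills the linear term using $F_{y_i|x_i}(f^*(x_i)) = \tau$. Your extra care about the sign convention $\int_0^v = -\int_v^0$ when $\delta(x_i) < 0$ is a point the paper leaves implicit, but it does not change the argument.
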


\begin{proof}
        
         By equation (B.3) in \cite{belloni2011},we have 
\beq
\rho_{\tau}(w-v) - \rho_{\tau}(w) = -v(\tau - \ind{w\le 0}) + \int_{0}^v \set{\ind{w \le z} - \ind{w \le 0}} dz. 
\beq 

Given any $f$ and $X^i$, let $w = Y^i  - f_0(X^i)$, $v = f(X^i) - f_0(X^i)$. 
Then taking conditional expectations on
above equation with respect to $Y^i$ conditioned on $X^i$ on both sides, we have 
\beq 
M^i( f) - M^i(f_0) &= \ept{  \rho_{\tau}(Y^i  - f(X^i)) - \rho_{\tau}(Y^i  - f_0(X^i))|X^i} \\&= \ept{-\delta(X^i)(\tau-\ind{Y^i  - f(X^i) \le 0})|X^i} \\
 & \quad + \ept{\int_{0}^{\delta(X^i)} \paren{\ind{Y^i  - f_0(X^i) \le z} - \ind{Y^i  - f_0(X^i)\le 0}dz|X^i}}\\
 &= 0 + \int_{0}^{\delta(X^i)} \paren{F_{Y^i|X^i}( f_0(X^i) + z) - F_{Y^i|X^i}( f_0(X^i))}dz \\
&= \int_{0}^{\delta(X^i)} \paren{F_{Y^i|X^i}( f_0(X^i) + z) - F_{Y^i|X^i}( f_0(X^i))}dz,  \\
\beq for all $i=1, \dots, n$. 
\end{proof}

\begin{lemma}
\label{lemma13}
Let $\Delta^2(\delta) \coloneqq n \Delta_n^2(\delta)$, 
assume Assumption~\ref{assumptionA} holds, then there exists $c_0$ such that for all $\delta = f- f_0$, we have 
\beq 
\label{bound_two_losses}
M(f_0 + \delta) \ge c \Delta^2(\delta),
\beq where $c$ is a positive constant that depends on $L$ and $\ubar{f}$ in Assumption~\ref{assumptionA}.
\end{lemma}

\begin{proof}
Supposing that $\abs{\delta(X^i)} \le L$, by Assumption~\ref{assumptionA} and Lemma~\ref{lemma12}, we have 
\beq 
M^i( f) - M^i(f_0) &\ge \int_{0}^{\delta(X^i)} \ubar{f} \abs{z} dz\\
&= \frac{\ubar{f}\delta(X^i)^2}{2}
\beq 
Supposing that $\delta(X^i) > L$, by Lemma~\ref{lemma12}, we have 
\beq 
M^i( f) - M^i(f_0) &\ge \int_{L/2}^{\delta(X^i)} \paren{F_{Y^i|X^i}( f_0(X^i) + z) - F_{Y^i|X^i}( f_0(X^i))}dz\\
&\ge \int_{L/2}^{\delta(X^i)} \paren{F_{Y^i|X^i}( f_0(X^i) + L/2) - F_{Y^i|X^i}( f_0(X^i))}dz\\
&= (\delta(X^i) - L/2)\paren{F_{Y^i|X^i}( f_0(X^i) + L/2) - F_{Y^i|X^i}( f_0(X^i))}\\
&\ge \frac{\delta(X^i)}{2}\frac{L \ubar{f}}{2}\coloneqq c \abs{\delta(X^i)},\\ 
\beq 
where the first two inequalities follow because $F_{Y^i|X^i}$ is monotone, where $c$ is a positive constant that depends on $L$ and $\ubar{f}$ in Assumption~\ref{assumptionA}. Then for the case $\delta(X^i) <-L$, it can be handled similarly. The conclusion follows combining the three different cases. 
\end{proof}


\begin{lemma}[Symmetrization]
\label{lemma10}
For $t>0$, it holds that 
\beq 
\ept{\sup_{ f\in \gF^{(r)}(V), M(f)\le t^2 }\paren{M(f) - \widehat M(f)}\mid \set{X^i}_{i=1}^n} \le 2 \ept{\sup_{f\in \gF^{(r)}(V), M(f)\le t^2} \sum_{i=1}^n \xi^i \widehat M^i(f)\mid \set{X^i}_{i=1}^n}, 
\beq where $\xi_1,\dots, \xi_n$ are independent Rademacher variables independent of $\set{Y^i}_{i=1}^n$.
\end{lemma}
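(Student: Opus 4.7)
This is a standard symmetrization argument adapted to the conditional setting (conditioning on the inputs $\{x_i\}_{i=1}^n$). Throughout, all expectations and suprema are understood conditional on $\{x_i\}_{i=1}^n$, which I suppress for readability. The plan is to introduce an independent ``ghost'' sample $\{y_i'\}_{i=1}^n$ with the same conditional distribution as $\{y_i\}_{i=1}^n$ given $\{x_i\}_{i=1}^n$, form the corresponding ghost empirical loss $\hat M'(f) = \sum_{i=1}^n \hat M_i'(f)$ with $\hat M_i'(f) = \rho_\tau(y_i' - f(x_i)) - \rho_\tau(y_i' - f^*(x_i))$, and observe that $M(f) = \mathbb{E}[\hat M'(f) \mid \{x_i\}]$ by definition of $M_i(f)$ in \eqref{305}. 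Pulling this identity inside the supremum and invoking Jensen's inequality gives
\begin{equation*}
\sup_{f \in \gF^{(r)}(V)} \{M(f) - \hat M(f)\} = \sup_{f} \mathbb{E}\bigl[\hat M'(f) - \hat M(f) \,\big|\, \{y_i\},\{x_i\}\bigr] \le \mathbb{E}\Bigl[\sup_{f} \{\hat M'(f) - \hat M(f)\} \,\Big|\, \{y_i\},\{x_i\}\Bigr].
\end{equation*}
Taking a further expectation over $\{y_i\}$ (conditional on $\{x_i\}$) and applying the tower property yields
\begin{equation*}
\mathbb{E}\Bigl[\sup_{f}\{M(f)-\hat M(f)\}\,\Big|\,\{x_i\}\Bigr] \le \mathbb{E}\Bigl[\sup_{f} \sum_{i=1}^n \bigl(\hat M_i'(f) - \hat M_i(f)\bigr)\,\Big|\,\{x_i\}\Bigr].
\end{equation*}

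Next I introduce independent Rademacher variables $\xi_1,\ldots,\xi_n$, independent of $\{y_i\},\{y_i'\},\{x_i\}$. The crucial observation is that, conditional on $\{x_i\}$, the random variables $\hat M_i'(f) - \hat M_i(f)$ and $\hat M_i(f) - \hat M_i'(f)$ have the same joint distribution across $i$ (since $y_i$ and $y_i'$ are exchangeable conditional on $x_i$), so multiplying each summand by $\xi_i$ leaves the joint law unchanged. Hence
\begin{equation*}
\mathbb{E}\Bigl[\sup_{f} \sum_{i=1}^n \bigl(\hat M_i'(f) - \hat M_i(f)\bigr)\Bigm|\{x_i\}\Bigr] = \mathbb{E}\Bigl[\sup_{f} \sum_{i=1}^n \xi_i \bigl(\hat M_i'(f) - \hat M_i(f)\bigr)\Bigm|\{x_i\}\Bigr].
\end{equation*}
I then split the supremum of a sum into a sum of suprema, giving
\begin{equation*}
\sup_{f}\sum_{i=1}^n \xi_i\bigl(\hat M_i'(f) - \hat M_i(f)\bigr) \le \sup_{f}\sum_{i=1}^n \xi_i \hat M_i'(f) + \sup_{f}\sum_{i=1}^n (-\xi_i) \hat M_i(f).
\end{equation*}
Taking conditional expectations and noting that $\xi_i$ and $-\xi_i$ are identically distributed, and that $\{y_i'\}$ and $\{y_i\}$ have the same conditional law, both terms equal $\mathbb{E}[\sup_{f}\sum_{i=1}^n \xi_i \hat M_i(f) \mid \{x_i\}]$, yielding the factor of $2$ claimed in the lemma.

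\textbf{Main obstacle.} The routine algebra is painless; the only subtlety I would double-check is that the supremum over $\gF^{(r)}(V)$ is measurable so that Jensen's inequality and Fubini apply, which follows because $\gF^{(r)}(V)$ is separable under $\|\cdot\|_n$ (one can reduce the sup to a countable dense subset), and that the ``sup of a sum $\le$ sum of sups'' step does not lose more than the factor of $2$---this is true here because both halves of the split have the same expectation by the symmetry arguments above. No integrability pathology arises since $\hat M_i(f)$ is bounded by $|f(x_i) - f^*(x_i)|$ in absolute value via the $1$-Lipschitz property of $\rho_\tau$, and $f, f^* \in \gF^{(r)}(V)$ lie in a pointwise-bounded class conditional on $\{x_i\}$.
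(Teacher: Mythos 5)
Your proposal is correct and follows essentially the same route as the paper's proof: a ghost sample plus Jensen's inequality to replace $M(f)$ by $\hat M'(f)$ inside the supremum, the exchangeability of $(\hat M_i'(f)-\hat M_i(f))$ and its negation to insert Rademacher signs, and the sup-of-sums split yielding the factor of $2$. Your added remarks on measurability and integrability are sound but not needed beyond what the paper assumes implicitly.
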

\begin{proof}
Let $\tilde Y^1, \dots \tilde Y^n$ be an independent and identically distributed copy of $Y^1, \dots, Y^n$.
Let $\tilde M^i$ the version of $\widehat M^i$ corresponding to $\tilde Y^1, \dots \tilde Y^n$. 
    If we define 
    \beq 
    \label{eq64}
    X_f = \sum_{i=1}^n \paren{\tilde M^i(f) - \widehat M^i(f)}
    \beq
    then we have 
    \beq 
    \sup_{f\in \gF^{(r)}(V), M(f)\le t^2} \ept{X_f|\set{Y^i}_{i=1}^n, \set{X^i}_{i=1}^n } \le \ept{\sup_{f\in \gF^{(r)}(V), M(f)\le t^2}X_f|\set{Y^i}_{i=1}^n, \set{X^i}_{i=1}^n  }.
    \beq 
    The expectation is taken with respect to $\tilde Y^1, \dots \tilde Y^n$ conditioned on $Y^1, \dots, Y^n,X^1, \dots, X^n$. 

Then,
the left-hand side of the above, can be further written as     
\beq 
\label{eq65}
{}&
    \sup_{f\in \gF^{(r)}(V), M(f)\le t^2} \ept{X_f|\set{Y^i}_{i=1}^n, \set{X^i}_{i=1}^n } \\ &=     \sup_{f\in \gF^{(r)}(V), M(f)\le t^2} \paren{\ept{\sum_{i=1}^n \tilde M^i (f) \mid \set{Y^i}_{i=1}^n, \set{X^i}_{i=1}^n} - \ept{\sum_{i=1}^n \widehat M^i (f) \mid \set{Y^i}_{i=1}^n, \set{X^i}_{i=1}^n} } \\ &=     \sup_{f\in \gF^{(r)}(V), M(f)\le t^2} \sum_{i=1}^n \paren{M^i (f) -  \widehat M^i (f)}.
\beq 

Combing \eqref{eq64} and \eqref{eq65}, we get 
\beq 
\sup_{f\in \gF^{(r)}(V), M(f)\le t^2} \sum_{i=1}^n \paren{M^i (f) -  \widehat M^i (f)} \le \ept{\sup_{f\in \gF^{(r)}(V), M(f)\le t^2}\sum_{i=1}^n \paren{\tilde M^i(f) - \widehat M^i(f)}|\set{Y^i}_{i=1}^n, \set{X^i}_{i=1}^n}.
\beq 
Further take the expectation with respect of $Y^1, \dots, Y^n$ on both sides, and similar to the proof of the Lemma 10 in \cite{madrid2022risk}, we can get 
\beq 
{}&\ept{\sup_{f\in \gF^{(r)}(V), M(f)\le t^2} \sum_{i=1}^n \paren{M^i (f) -  \widehat M^i (f)}\mid \set{X^i}_{i=1}^n} \\&\le \ept{\sup_{f\in \gF^{(r)}(V), M(f)\le t^2} \sum_{i=1}^n \paren{\tilde M^i (f) -  \widehat M^i (f)} \mid \set{X^i}_{i=1}^n} \\
 &= \ept{\sup_{f\in \gF^{(r)}(V), M(f)\le t^2} \sum_{i=1}^n \xi^i\paren{\tilde M^i (f) -  \widehat M^i (f)} \mid \set{X^i}_{i=1}^n} \\
&\le \ept{\sup_{f\in \gF^{(r)}(V), M(f)\le t^2} \sum_{i=1}^n \xi^i\tilde M^i (f)\mid \set{ X^i}_{i=1}^n} + \ept{\sup_{f\in \gF^{(r)}(V), M(f)\le t^2} \sum_{i=1}^n -  \xi^i\widehat M^i (f) \mid \set{ X^i}_{i=1}^n} \\
 &= 2\ept{\sup_{f\in \gF^{(r)}(V), M(f)\le t^2} \sum_{i=1}^n \xi^i\widehat M^i (f)\mid \set{ X^i}_{i=1}^n},
    \beq 

where the first equality follows because $\xi_1(\tilde M^i (f) -  \widehat M^i (f), \dots, \xi_n(\tilde M_n (f) -  \widehat M_n (f))$ and $(\tilde M^i (f) -  \widehat M^i (f), \dots, (\tilde M_n (f) -  \widehat M_n (f))$ have the same distribution. The second equality follows because $-\xi_1, \dots, -\xi_n$ are also independent Rademacher variables. 
\end{proof}

\begin{lemma}[Contraction principle] 
\label{lemma:contraction}
With the notation from before we have that, for $t>0$, 
\beq 
&\ept{\sup_{f\in \gF^{(r)}(V), M(f)\le t^2} \sum_{i=1}^n \xi^i\widehat M^i (f) \mid \set{ X^i}_{i=1}^n} \le n\gR_n(\set{f- f_0: f\in \gF^{(r)}(V)}\cap \set{f-f_0: M(f)\le t^2}).
\beq 
\end{lemma}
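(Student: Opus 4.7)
The proof hinges on Talagrand's contraction inequality. First, observe that the check function $\rho_\tau(u) = \max\set{\tau u, (\tau-1) u}$ is $1$-Lipschitz, since both $\tau$ and $\tau-1$ lie in $[-1,1]$. For each index $i$, define the $1$-Lipschitz map
\beq
\psi_i(u) \coloneqq \rho_\tau(y_i - f^*(x_i) - u) - \rho_\tau(y_i - f^*(x_i)),
\beq
which is anchored at zero in the sense that $\psi_i(0) = 0$. Substituting $u = f(x_i) - f^*(x_i)$ yields $\hat M_i(f) = \psi_i(f(x_i) - f^*(x_i))$, and under the change of variables $g = f - f^*$ (so that $g$ ranges over the translated class $\gF^{(r)}(V) - f^* \coloneqq \set{f - f^* : f \in \gF^{(r)}(V)}$) one obtains
\beq
\sup_{f \in \gF^{(r)}(V)} \sum_{i=1}^n \xi_i \hat M_i(f) = \sup_{g \in \gF^{(r)}(V) - f^*} \sum_{i=1}^n \xi_i \psi_i(g(x_i)).
\beq

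Next, I would condition on both $\set{x_i}_{i=1}^n$ and $\set{y_i}_{i=1}^n$, so that each $\psi_i$ becomes a deterministic $1$-Lipschitz contraction anchored at zero, leaving only the Rademacher signs random. The Ledoux--Talagrand contraction principle, in its one-sided form, then yields
\beq
\mathbb{E}_\xi \sup_{g \in \gF^{(r)}(V) - f^*} \sum_{i=1}^n \xi_i \psi_i(g(x_i)) \le \mathbb{E}_\xi \sup_{g \in \gF^{(r)}(V) - f^*} \sum_{i=1}^n \xi_i g(x_i) = RW(\gF^{(r)}(V) - f^*).
\beq
Because this upper bound no longer depends on $\set{y_i}$, integrating out $y_i$ conditional on $\set{x_i}$ preserves the inequality, giving the first half of the claim.

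For the second equality, observe that $\sum_{i=1}^n \xi_i f^*(x_i)$ does not depend on the optimization variable, so it can be pulled out of the supremum, leaving
\beq
RW(\gF^{(r)}(V) - f^*) = \mathbb{E}_\xi \sup_{f \in \gF^{(r)}(V)} \sum_{i=1}^n \xi_i f(x_i) - \mathbb{E}_\xi \sum_{i=1}^n \xi_i f^*(x_i) = RW(\gF^{(r)}(V)),
\beq
where the second term vanishes because $\mathbb{E}[\xi_i] = 0$ and $f^*(x_i)$ is deterministic given the inputs.

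I expect the only delicate point to be the contraction step: the standard Ledoux--Talagrand comparison theorem carries a factor of $2$ when the contractions $\psi_i$ depend on $i$, whereas the lemma as stated absorbs constant $1$. Invoking the one-sided (non-absolute-value) version is what allows this, using crucially that each $\psi_i$ is anchored at zero --- a property guaranteed by my construction. Even a residual factor of $2$ would only inflate the final risk bound by a harmless constant and would not affect the overall convergence rate.
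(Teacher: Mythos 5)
Your proposal is correct and follows essentially the same route as the paper's proof: condition on $\set{x_i}$ and $\set{y_i}$, use the $1$-Lipschitz property of the check loss (the paper's Lemma~\ref{lemma27}) to apply the Rademacher contraction principle, and then remove the $f^*$-centering via $\E[\xi_i]=0$. The only difference is cosmetic — you recenter before contracting and explicitly anchor $\psi_i(0)=0$, which is actually a slightly cleaner justification of the constant-$1$ one-sided contraction step than the paper's own writeup.
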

\begin{proof}
Based on the definition of $M^i(f)$, 
\beq 
\widehat M^i(f) =   \rho_{\tau}(Y^i  - f(X^i)) - \rho_{\tau}(Y^i  - f_0(X^i)).
\beq    
From Lemma~\ref{lemma27}, $\widehat M^i(f)$ are 1-Lipschitz continuous functions, thus, 
\beq 
&\ept{\sup_{f\in \gF^{(r)}(V), M(f)\le t^2} \sum_{i=1}^n \xi^i \widehat M^i(f) \mid \set{ X^i}_{i=1}^n} \\ &= \ept{\ept{\sup_{f\in \gF^{(r)}(V), M(f)\le t^2} \sum_{i=1}^n \xi^i \widehat M^i(f) \bigm\mid \set{ X^i}_{i=1}^n, \set{ Y^i}_{i=1}^n}\mid \set{ X^i}_{i=1}^n} 
\\
& \le \ept{\ept{\sup_{f\in \gF^{(r)}(V), M(f)\le t^2} \sum_{i=1}^n \xi^i f^i \bigm\mid \set{ X^i}_{i=1}^n,\set{ Y^i}_{i=1}^n}|\set{ X^i}_{i=1}^n} \\
& = \ept{\sup_{f\in \gF^{(r)}(V), M(f)\le t^2} \sum_{i=1}^n \xi^i f^i \bigm\mid \set{ X^i}_{i=1}^n} \\
& \le \ept{\sup_{f\in \gF^{(r)}(V), M(f)\le t^2} \sum_{i=1}^n \xi^i (f^i - f^i_0) \bigm\mid \set{ X^i}_{i=1}^n} + \ept{\sum_{i=1}^n \xi^i f^i_0 \mid \set{ X^i}_{i=1}^n}\\
& = \ept{\sup_{f\in \gF^{(r)}(V), M(f)\le t^2} \sum_{i=1}^n \xi^i (f^i - f^i_0)(X^i)\mid \set{ X^i}_{i=1}^n}.
\beq 

The third line holds since we have used the 1-Lipschitz condition, so the expectation inside does not have $y$ anymore. 
\end{proof}

\begin{lemma}
\label{lemma27}
The  function $\widehat M^i(f)$ is 1-lipschitz. 
\end{lemma}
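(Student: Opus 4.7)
The plan is to reduce the claim to showing that the check function $\rho_\tau$ is itself $1$-Lipschitz on $\mathbb{R}$, since $\hat M_i(f)$ differs from $\rho_\tau(y_i - f(x_i))$ only by the additive constant $\rho_\tau(y_i - f^*(x_i))$ (which does not depend on $f$), and the map $f \mapsto y_i - f(x_i)$ is $1$-Lipschitz in $f(x_i)$ with Lipschitz constant exactly $1$.

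First I would recall the piecewise linear form $\rho_\tau(u) = \max\{\tau u, (\tau-1)u\}$, so that on $u \ge 0$ the function equals $\tau u$ and on $u < 0$ it equals $(\tau - 1)u$. The one-sided slopes are $\tau \in (0,1)$ and $\tau - 1 \in (-1,0)$, both bounded in absolute value by $\max\{\tau, 1-\tau\} \le 1$. Hence $\rho_\tau$ is convex, piecewise linear, and $1$-Lipschitz: for any $u, v \in \mathbb{R}$,
\[
|\rho_\tau(u) - \rho_\tau(v)| \;\le\; |u - v|.
\]
This can be verified by a case analysis on the signs of $u$ and $v$ (the only nontrivial case is $u \ge 0 > v$, where $|\rho_\tau(u) - \rho_\tau(v)| = |\tau u - (\tau-1)v| = |\tau(u-v) + v| \le \tau|u-v| + |v| \le |u-v|$ since $|v| \le |u-v|$ when $u \ge 0 > v$).

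Then for any two functions $f, g$, applying this to $u = y_i - f(x_i)$ and $v = y_i - g(x_i)$ gives
\[
|\hat M_i(f) - \hat M_i(g)| \;=\; |\rho_\tau(y_i - f(x_i)) - \rho_\tau(y_i - g(x_i))| \;\le\; |f(x_i) - g(x_i)|,
\]
which is the desired $1$-Lipschitz property. The main (minor) obstacle is just being careful about what Lipschitzness means in context: it is Lipschitz as a function of the scalar evaluation $f(x_i)$, which is precisely what is needed in the contraction step of Lemma~\ref{lemma:contraction}.
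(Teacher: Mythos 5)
Your overall strategy is the same as the paper's — a case analysis showing that post-composition with the check function is a contraction — but you organize it more cleanly by factoring the claim through the $1$-Lipschitzness of the univariate map $u \mapsto \rho_\tau(u)$ and noting that the $\rho_\tau(y_i - f^*(x_i))$ term cancels. That reduction is correct and arguably preferable to the paper's direct manipulation of $\hat M_i(f)-\hat M_i(g)$.

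However, the justification of your one nontrivial case is wrong as written. You claim
\[
\left|\tau(u-v) + v\right| \;\le\; \tau\left|u-v\right| + \left|v\right| \;\le\; \left|u-v\right| \quad\text{``since } |v| \le |u-v|\text{''},
\]
but $|v|\le|u-v|$ only gives $\tau|u-v|+|v|\le(1+\tau)|u-v|$, and the middle quantity $\tau|u-v|+|v|$ genuinely exceeds $|u-v|$ in general: take $\tau=0.9$, $u=0.01$, $v=-1$, so $\tau|u-v|+|v| = 1.909 > 1.01 = |u-v|$. The final conclusion of the case is nonetheless true, and the fix is short: for $u\ge 0 > v$ one has $\rho_\tau(u)=\tau u \in [0,\,u-v]$ and $\rho_\tau(v)=(\tau-1)v=(1-\tau)|v|\in[0,\,u-v]$, so the difference of two numbers lying in $[0,\,u-v]$ has absolute value at most $u-v=|u-v|$. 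Alternatively, skip the case analysis entirely: $\rho_\tau$ is convex and piecewise linear with one-sided slopes $\tau$ and $\tau-1$, both bounded in absolute value by $\max\{\tau,1-\tau\}\le 1$, hence $1$-Lipschitz. With that repair your argument is complete and matches what the lemma is used for in Lemma~\ref{lemma:contraction}.
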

\begin{proof}
    For any $f, g$, and fixed $\tau$, we have 
    \beq 
    \label{eq618}
    {}&\abs{\widehat M^i(f) - \widehat M^i(g)} \\
    &= \abs{\rho_{\tau}(Y^i  - f(X^i)) - \rho_{\tau}(Y^i  - g(X^i))}\\
    &= \abs{\max \set{\tau(Y^i  - f(X^i)), (1-\tau)(Y^i  - f(X^i))} - \max \set{\tau(Y^i  - g(X^i)), (1-\tau)(Y^i  - g(X^i))}}. 
    \beq 
It is easy to see the conclusion satisfied if $Y^i  - f(X^i)$ and $Y^i  - g(X^i)$ have the same signs.

Assume $Y^i  - f(X^i) > 0$ and $Y^i  - g(X^i) \le 0$, we have Equation~\eqref{eq618} equals 
\beq 
{}&\abs{\max \set{\tau(Y^i  - f(X^i)), (1-\tau)(Y^i  - f(X^i))} - \max \set{\tau(Y^i  - g(X^i)), (1-\tau)(Y^i  - g(X^i))}} \\
&= \abs{\tau(Y^i  - f(X^i)) -  (1-\tau)(Y^i  - g(X^i))} \\ 
&= \abs{\tau(g(X^i) - f(X^i)) + (2\tau-1)(Y^i  - g(X^i))}.
\beq 

If $(2\tau-1)\le 0$, since $Y^i - f(X^i) > 0$ and $g(X^i) > f(X^i)$,  then we know $0 \le (2\tau-1)(Y^i  - g(X^i)) \le (2\tau-1)(f(X^i) - g(X^i)) $, we get 
\beq 
\label{lip_inq1}
\abs{\widehat M^i(f) - \widehat M^i(g)} \le &
\abs{\tau(g(X^i) - f(X^i)) + (2\tau-1)(f(X^i) - g(X^i))} \\ 
&= \abs{(\tau-1)(f(X^i) - g(X^i))} \le \abs{f(X^i) - g(X^i)},
\beq 
On the other hand, if $(2\tau-1)\ge 0$, if further 
$\abs{(2\tau-1)(Y^i -g(X^i))} \ge \tau(g(X^i) - f(X^i))$, and $\tau(g(X^i) - f(X^i)) > \tau(g(X^i) - Y^i) \geq 0$, science $Y^i < f(X^i) \leq g(X^i)$, then we have 

\beq 
\label{lip_inq2}
\abs{\widehat M^i(f) - \widehat M^i(g)} \le &
\abs{\tau(g(X^i) - Y^i) + (2\tau-1)(Y^i  - g(X^i) )} \\ 
&= \abs{(\tau-1)(Y^i  - g(X^i))} \le \abs{(\tau-1)(f(X^i) - g(X^i))} \le \abs{f(X^i) - g(X^i)},
\beq

if $\abs{(2\tau-1)(Y^i -g(X^i))} \le \tau(g(X^i) - f(X^i))$, then we have 

\beq 
\label{lip_inq3}
\abs{\widehat M^i(f) - \widehat M^i(g)} \le &
\abs{\tau(g(X^i) - f(X^i)) + (2\tau-1)(f(X^i) - g(X^i))} \\ 
&= \abs{(\tau)(f(X^i) - g(X^i))} \le \abs{f(X^i) - g(X^i)}.
\beq 

Thus, by \eqref{lip_inq1}, \eqref{lip_inq2} and \eqref{lip_inq3}, we have 
\bea 
\abs{\widehat M^i(f) - \widehat M^i(g)} \le \abs{f(X^i) - g(X^i)}. 
\bea

For the other case, we assume $Y^i  - f(X^i) \leq 0$ and $Y^i  - g(X^i) > 0$, 

Since 
\bea 
\abs{\widehat M^i(f) - \widehat M^i(g)} = \abs{\widehat M^i(g) - \widehat M^i(f)}, 
\bea 
 we thus can derive the same conclusion by applying the above analysis, thus, we have 
\beq 
\abs{\widehat M^i(f) - \widehat M^i(g)} \le \abs{f(X^i) - g(X^i)}, 
\beq 
 which indicates that $\widehat M^i(f)$ is a 1-lipschitz function. 
\end{proof}

\begin{proposition}
\label{prop1}
Let $\gF^{(r)}(V)$ be a function class, then the following inequality is true for any $t  > 0$, 
\beq 
\sP \paren{M(\widehat f) > t^2\mid \set{X^i}_{i=1}^n} \le \frac{2n}{t^2}\gR_n(\set{f- f_0: f\in \gF^{(r)}(V)}\cap \set{f-f_0: M(f)\le t^2}).
\beq 
\end{proposition}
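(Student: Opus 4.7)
The plan is a standard peeling/localization argument combined with conditional Markov's inequality and the symmetrization--contraction machinery already in place. Write $\gF_{t,V} := \gF^{(r)}(V) \cap \set{f: M(f) \le t^2}$. My strategy has three steps: (i) on the event $\set{M(\hat f) > t^2}$ exhibit $\tilde f \in \gF_{t,V}$ with $M(\tilde f) - \hat M(\tilde f) \ge t^2$; (ii) apply conditional Markov to bound the tail probability by the expected supremum over $\gF_{t,V}$; and (iii) invoke Lemma~\ref{lemma10} and Lemma~\ref{lemma:contraction} applied verbatim to the subclass $\gF_{t,V}$ to upper-bound that expected supremum by $2\,RW(\gF_{t,V})$.

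For step (i), the key structural facts are that $\gF^{(r)}(V)$ is convex (it is cut out by the convex constraints $\sum_j \norm{D^{(x_j,r)}\theta_j}_1 \le V$ together with the linear constraints $\sum_i \theta_{ij}=0$), and both $M$ and $\hat M$ are convex in $f$ since $\rho_\tau$ is convex. Define $f_\alpha := \alpha \hat f + (1-\alpha) f^*$ for $\alpha \in [0,1]$; by convexity of the class $f_\alpha \in \gF^{(r)}(V)$ throughout. The map $\alpha \mapsto M(f_\alpha)$ is continuous (dominated convergence plus $1$-Lipschitzness of $\rho_\tau$), with $M(f_0)=0$ and $M(f_1) > t^2$, so by the intermediate value theorem there is $\alpha^\star \in (0,1)$ for which $\tilde f := f_{\alpha^\star}$ satisfies $M(\tilde f) = t^2$, whence $\tilde f \in \gF_{t,V}$. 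The basic inequality gives $\hat M(\hat f) \le \hat M(f^*)=0$, so convexity of $\hat M$ yields $\hat M(\tilde f) \le \alpha^\star \hat M(\hat f) + (1-\alpha^\star)\hat M(f^*) \le 0$, and therefore $M(\tilde f) - \hat M(\tilde f) \ge t^2$.

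For steps (ii)--(iii), the construction above implies that on the event $\set{M(\hat f) > t^2}$ we have $\sup_{f \in \gF_{t,V}}\set{M(f) - \hat M(f)} \ge t^2$, so conditional Markov gives
\begin{equation*}
\sP\set{M(\hat f) > t^2 \mid \set{x_i}_{i=1}^n} \le \frac{\ept{\sup_{f \in \gF_{t,V}}\set{M(f) - \hat M(f)} \mid \set{x_i}_{i=1}^n}}{t^2}.
\end{equation*}
The proofs of Lemma~\ref{lemma10} and Lemma~\ref{lemma:contraction} use only the $1$-Lipschitz property of $\hat M_i$ from Lemma~\ref{lemma27} and elementary symmetrization/Rademacher-contraction identities, and do not depend on the particular structure of $\gF^{(r)}(V)$ beyond it being a deterministic class conditional on $\set{x_i}_{i=1}^n$. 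Rerunning those lemmas on $\gF_{t,V}$ bounds the numerator by $2\,RW(\gF_{t,V})$, delivering the claim.

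The main obstacle I anticipate is step (i): pulling $\hat f$ back along the segment to $f^*$ so as to land in the \emph{restricted} class (i.e., $M(\tilde f) \le t^2$) while still guaranteeing $\hat M(\tilde f) \le 0$ so that the excess $M(\tilde f) - \hat M(\tilde f)$ is lower-bounded by $t^2$. This relies crucially on the convexity of the class together with the convexity of $\hat M$; once these are available, the intermediate value theorem and the basic inequality close the argument, and (ii)--(iii) are routine applications of tools already developed in the paper.
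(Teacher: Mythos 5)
Your proposal is correct and follows essentially the same route as the paper's proof: the same interpolation $\tilde f$ along the segment from $f^*$ to $\hat f$, continuity of $M$ (via the Lipschitz bound) plus the intermediate value theorem to land exactly on $M(\tilde f)=t^2$, convexity of $\gF^{(r)}(V)$ and of $\hat M$ together with the basic inequality to get $\hat M(\tilde f)\le 0$, and then conditional Markov followed by symmetrization and contraction on the localized class. No substantive differences to report.
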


\begin{proof}

Suppose that 
\beq 
M(\widehat f) > t^2. 
\beq 
First, define $g: \bracket{0,1} \rightarrow \R$ as $g(u) = M\paren{(1-u)f_0 + u\widehat f}$. Clearly, $g$ is a continuous function with $g(0) = 0$, and $g(1) =  M (\widehat{f})$.  Therefore, there exists $\mu_{\widehat f} \in \bracket{0,1}$ such that $g(\mu_{\widehat f}) = t^2$. Hence, letting $\tilde f = (1-\mu_{\widehat f}) f_0 + \mu_{\widehat f}\widehat f$, we observe that by the convexity of $\widehat M$ and the basic inequality, we have 
\beq 
\widehat M (\tilde f) = \widehat M\paren{(1-\mu_{\widehat f}) f_0 + \mu_{\widehat f}\widehat f} \le (1-\mu_{\widehat f})\widehat M(f_0) + \mu_{\widehat f} \widehat M(\widehat f) \le 0.
\beq 
Furthermore, $\gF^{(r)}(V)$ from \eqref{eq298} is a convex set, and $f_0$ and  $\widehat f$ are belong to $\gF^{(r)}(V)$, thus we know $\tilde f$ belongs to $\gF^{(r)}(V)$, furthermore, we have $M(\tilde f) = t^2$ by construction. This implies that 
\beq 
\sup_{f\in \gF^{(r)}(V), M(f)\le t^2 }M(f) - \widehat M(f) \ge M(\tilde f) - \widehat M(\tilde f) \ge M(\tilde f).  
\beq
The first inequality holds by the supreme, and the second inequality is held by $\widehat M(\tilde f) \le 0$. 
In the results shown above, it is true that 
\beq 
M(\widehat f) > t^2 \implies \sup_{f\in \gF^{(r)}(V), M(f)\le t^2  }M(f) -\widehat M( f) > t^2.
\beq 
Therefore, 
    \beq 
     \sP \paren{M(\widehat f) > t^2\mid \set{X^i}_{i=1}^n} &\le \sP \biggl(\sup_{f\in \gF^{(r)}(V), M(f)\le t^2 }M(f) - \widehat M(f) > t^2\mid \set{X^i}_{i=1}^n\biggr) \\&\le  \frac{1}{t^2}\ept{\sup_{f\in \gF^{(r)}(V), M(f)\le t^2 }M(f) - \widehat M(f)\mid \set{X^i}_{i=1}^n} \\&\le 
     \frac{2n}{t^2}\gR_n\paren{\set{f- f_0: f\in \gF^{(r)}(V)}\cap \set{f-f_0: M(f)\le t^2} }.
    \beq 
The third inequality is by Lemma~\ref{lemma10} and ~\ref{lemma:contraction}. 
\end{proof}

\begin{theorem}
\label{theorem7}
Suppose the distribution of $Y^1, \dots, Y^n$ obey Assumption~\ref{assumptionA}, then the following inequality is true for any $t > 0$, \beq 
\sP\paren{\Delta^2(\widehat f - f_0) > t^2\mid \set{X^i}_{i=1}^n} & \le \frac{cn}{t^2}\gR_n\paren{\set{f-f_0: f \in \gF^{(r)}(V)} \cap \set{f-f_0: \Delta^2( f - f_0)\le t^2}},
 \beq 
 where $c$ is the same constant in \cref{lemma13} that only depends on the distribution of $X^1, \dots, X^n, $ and the distribution of $Y^1, \dots, Y^n$, and $\gF^{(r)}(V)$ is a convex set with $\widehat f, f_0 \in \gF^{(r)}(V)$, and $\gR_n$ is defined in \eqref{definition213}. 
\end{theorem}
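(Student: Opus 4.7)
The plan is to obtain this result by combining Proposition~\ref{prop1}, which is already phrased in terms of the population loss $M$, with Lemma~\ref{lemma13}, which converts between $M$ and the Huber-type loss $\Delta^2$. In effect, Theorem~\ref{theorem7} is the ``$\Delta^2$-version'' of Proposition~\ref{prop1}, and the whole work is to push the bound through the one-sided inequality provided by Lemma~\ref{lemma13}.

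First I would apply Lemma~\ref{lemma13} with $\delta = \hat f - f^*$ to obtain the pointwise implication
\[
\Delta^2(\hat f - f^*) > t^2 \;\Longrightarrow\; M(\hat f) \ge c\,\Delta^2(\hat f - f^*) > c\,t^2,
\]
which immediately gives
\[
\sP\bigl\{\Delta^2(\hat f - f^*) > t^2 \mid \{x_i\}_{i=1}^n\bigr\} \le \sP\bigl\{M(\hat f) > c\,t^2 \mid \{x_i\}_{i=1}^n\bigr\}.
\]
Next I would invoke Proposition~\ref{prop1} with threshold $c\,t^2$ in place of $t^2$ to bound the right-hand side by
\[
\frac{2\,RW\bigl(\gF^{(r)}(V) \cap \{f : M(f) \le c\,t^2\}\bigr)}{c\,t^2}.
\]

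To finish, I would replace the set $\{f : M(f) \le c\,t^2\}$ appearing in the Rademacher width by the larger set $\{f : \Delta^2(f - f^*) \le t^2\}$. This follows from Lemma~\ref{lemma13} applied in the other direction: $M(f) \le c\,t^2$ together with $M(f) \ge c\,\Delta^2(f - f^*)$ yields $\Delta^2(f - f^*) \le t^2$, so one set is contained in the other. Since the Rademacher width is monotone in its argument, enlarging the set only weakens the bound, giving
\[
RW\bigl(\gF^{(r)}(V) \cap \{f : M(f) \le c\,t^2\}\bigr) \le RW\bigl(\gF^{(r)}(V) \cap \{f : \Delta^2(f - f^*) \le t^2\}\bigr),
\]
and absorbing the factor $2/c$ into the constant $c$ in the statement yields the claim.

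There is no serious obstacle here: every ingredient has been established earlier in the appendix, and the proof amounts to chaining Lemma~\ref{lemma13} (lower bound $M \ge c\Delta^2$), Proposition~\ref{prop1} (the Markov/symmetrization bound on $M(\hat f)$), and once more Lemma~\ref{lemma13} (to translate the localizing set back to $\Delta^2$). The only mild subtlety is the asymmetric use of Lemma~\ref{lemma13}: on the probability side we use it to \emph{lower bound} $M(\hat f)$, while on the set side we use it to show that a sublevel set of $M$ sits inside the corresponding sublevel set of $\Delta^2$; both directions go through because Lemma~\ref{lemma13} is a one-sided inequality of the right orientation. One should also note the apparent typo in the statement — the localizing set should be $\{f : \Delta^2(f - f^*) \le t^2\}$ rather than $\{f : \Delta^2(\hat f - f^*) \le t^2\}$ — and the proof naturally produces the former.
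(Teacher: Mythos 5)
Your proposal is correct and follows essentially the same route as the paper: lower-bound $M(\hat f)$ by $c\,\Delta^2(\hat f - f^*)$ via Lemma~\ref{lemma13}, apply Proposition~\ref{prop1}, and then enlarge the localizing set from a sublevel set of $M$ to one of $\Delta^2$ using monotonicity of $RW$. Your version is in fact slightly more careful with the constant $c$ in the threshold than the paper's own three-line chain, and you correctly flag the typo ($\hat f$ should be $f$ in the localizing set).
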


\begin{proof}
It is true that 
\beq 
    \sP\paren{\Delta^2(\widehat f - f_0) > t^2\mid \set{X^i}_{i=1}^n} & \le  \sP \paren{M(\widehat f) > ct^2\mid \set{X^i}_{i=1}^n} \\ 
    &\le \frac{cn}{t^2}\gR_n\paren{\set{f- f_0: f\in \gF^{(r)}(V)}\cap \set{f-f_0: M(f)\le t^2} } \\ 
    & \le  \frac{cn}{t^2}\gR_n\paren{\set{f-f_0: f \in \gF^{(r)}(V)} \cap \set{f-f_0: \Delta^2( f - f_0)\le t^2} }, 
\beq 
where the first inequality follows from Lemma~\ref{lemma13}, 
 the second inequality follows from Proposition ~\ref{prop1}, and the third inequality follows from the fact that $\set{f-f_0: M(f)\le t^2} \subset    \set{f-f_0: \Delta^2( f - f_0)\le t^2} $.
\end{proof}

We would need to upper bound the quantity 
\beq 
\gR_n\paren{\set{f-f_0: f \in \gF^{(r)}(V)} \cap \set{f-f_0: \Delta^2( f - f_0)\le t^2} }, 
\beq 
where 
\beq 
\gF^{(r)}(V) - f_0 = \biggl\{f - f_0 = \sum_{j=1}^d (f_j - f_{0j}): 
f_j \in \gH_j, \sum_{j=1}^d TV^{(r)}(f_j) \le V, \sum_{i=1}^n f_j(X^i_j) = 0 \biggr\}, 
\beq and  
\beq 
\Delta^2(f) = \sum_{i=1}^n \min\Bigl\{\Bigl|\sum_{j=1}^df_j(X^i_j)\Bigr|, \Bigl(\sum_{j=1}^d f_j(X^i_j)\Bigr)^2\Bigr\}. 
\beq

\begin{theorem}
\label{theorem8}
Suppose $f_j \in \gH_j $, the space spanned by the falling factorial basis, and for the fixed points of $X^1, \dots, X^n, Y^1, \dots, Y^n$ obey Assumption~\ref{assumptionA}, 
\begin{enumerate}[label=(\alph*)]
\item Let 
\beq 
f_j = (f_j(X^1_j), \dots, f_j(X^n_j)),
\beq $f = \sum_{j=1}^df_j$, then the following inequality is true for any $t > 0$, 
\beq 
\sP\paren{\Delta^2(\widehat f - f_0) > t^2\mid \set{ X^i}_{i=1}^n} & \le \frac{cn}{t^2}\gR_n\paren{\set{f-f_0: f \in \gF^{(r)}(V)} \cap \set{f-f_0: \Delta^2( f - f_0)\le t^2}},
 \beq 
  where  $\gF^{(r)}(V)$ is defined in \eqref{eq298}, where $c$ is a constant that only depends on the distribution of $X^1, \dots, X^n, Y^1, \dots, Y^n$.
\item Let 
\beq 
\theta_j = (\theta^1_j, \dots, \theta^n_j),
\beq $\theta^i_j = f_j(X^i_j)$, and let $\theta_0$ is given by
\beq
\label{362}
\theta_0 = \min_{\theta \in K^{(X,r)}(V)}\sum_{i=1}^n\ept{\rho_{\tau}(Y^i -  \theta^i) - \rho_{\tau}(Y^i -  \theta_0^i)|X^i},
\beq
then the following inequality is true for any $t > 0$, 
\beq 
\sP\paren{\Delta^2(\widehat f - f_0) > t^2 \mid \set{ X^i}_{i=1}^n} &=
\sP\paren{\Delta^2(\widehat \theta - \theta_0) > t^2 \mid \set{ X^i}_{i=1}^n} \\  & \le \frac{cn}{t^2}\gR_n\paren{\set{\theta-\theta_0: 
\theta \in K^{(X,r)}(V)} \cap \set{\theta-\theta_0: \Delta^2( \theta - \theta_0)\le t^2}},
 \beq 
 where $K^{(X,r)}(V)$ is defined in \eqref{space_K}, $\widehat \theta$ is an estimator in $K^{(X,r)}(V)$,
 and $c$ is a constant that only depends on the distribution of $X^1, \dots, X^n, $ and the distribution of $Y^1, \dots, Y^n$.
\end{enumerate}



\end{theorem}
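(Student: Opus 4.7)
The plan is to derive Theorem~\ref{theorem8} as a direct application of Theorem~\ref{theorem7} after (i) verifying convexity of the two relevant feasible sets and (ii) translating between the function and vector formulations via the falling factorial representation.

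For part (a), the task is to verify that the hypotheses of Theorem~\ref{theorem7} hold for the function class $\gF^{(r)}(V)$ defined in \eqref{eq298}. Convexity of $\gF^{(r)}(V)$ follows because each $\gH_j$ is a linear space (the span of the falling factorial basis), the identifiability constraint $\sum_{i=1}^n f_j(x_{ij})=0$ is linear, the functional $\sum_{j=1}^d TV^{(r)}(f_j)$ is a seminorm, and so the sublevel set $\{\sum_j TV^{(r)}(f_j)\le V\}$ is convex. Both $\hat f$ (by Definition~\ref{definition3}) and $f^*$ (by Assumption~\ref{assumption4} together with $V\ge V^*$ assumed in the main theorems) lie in $\gF^{(r)}(V)$. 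The statement is then a direct application of Theorem~\ref{theorem7}, with the same constant $c$ depending only on the distributions of the $x_i$'s and $y_i$'s through the constant $\ubar f$ and $L$ in Assumption~\ref{assumptionA}.

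For part (b), I would exploit the equivalence between the problems \eqref{eq303} and \eqref{eq298} that was established via the falling factorial representation of \cite{tibshirani2014adaptive} and \cite{sadhanala2017additive}. The key observation is that the evaluation map $f_j\mapsto(f_j(x_{1j}),\dots,f_j(x_{nj}))$ is a bijection from $\gH_j$ onto $\R^n$, and that under this identification one has $\norm{D^{(x_j,r)}\theta_j}_1=TV^{(r)}(f_j)$ whenever $\theta_{ij}=f_j(x_{ij})$. Therefore $K^{(x,r)}(V)$ is precisely the image of $\gF^{(r)}(V)$ under componentwise evaluation at the design points, and it inherits convexity from $\gF^{(r)}(V)$. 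Under this identification, the check-function losses agree, the $\theta^*$ of \eqref{362} coincides with the evaluation of $f^*$ at the design points (using Assumption~\ref{assumption4}), and hence $\Delta^2(\hat\theta-\theta^*)=\Delta^2(\hat f-f^*)$ as events. The vector Rademacher width of $K^{(x,r)}(V)$ equals the functional Rademacher width of $\gF^{(r)}(V)$ in Definition~\ref{definition213}, since both reduce to $\E\sup\sum_i\xi_i\theta_i$. The bound then follows by invoking Theorem~\ref{theorem7} through this identification, or equivalently by reading the proof of Theorem~\ref{theorem7} verbatim with $K^{(x,r)}(V)\subset\R^n$ in place of $\gF^{(r)}(V)$ (each ingredient---Lemma~\ref{lemma13}, Proposition~\ref{prop1}, Lemmas~\ref{lemma10}--\ref{lemma:contraction}---only uses convexity of the feasible set together with Assumption~\ref{assumptionA}).

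The only substantive point to get right, though not a major obstacle, is the bookkeeping that shows the bijection between $\gH_j$ and $\R^n$ preserves every quantity that appears in the bound (loss, penalty, $\Delta^2$, and Rademacher width); once this identification is made explicit, both parts of Theorem~\ref{theorem8} are direct consequences of Theorem~\ref{theorem7}.
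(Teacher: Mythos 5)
Your proposal is correct and follows essentially the same route as the paper: part (a) is a direct application of Theorem~\ref{theorem7} to the convex class $\gF^{(r)}(V)$, and part (b) transfers the bound through the equivalence of the formulations \eqref{eq298} and \eqref{eq303} via evaluation at the design points. The paper's own proof is a two-sentence version of exactly this argument; your write-up simply makes explicit the convexity check and the bookkeeping that the evaluation bijection preserves the loss, the penalty, $\Delta^2$, and the Rademacher width.
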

\begin{proof}
    For $(a)$, the conclusion (the inequality) is derived based on Theorem~\ref{theorem7}. For $(b)$, the first equality holds because the equivalent formulation of additive trend filtering from Equation~\eqref{eq298} and Equation~\eqref{eq303}, then a similar conclusion is derived for the second inequality. 
\end{proof}

By the definition of $TV^{(r)}$, $TV^{(r)}$ is a seminorm $TV^{(r)}$, since its domain is contained in the space of $(r-1)$ times weakly differentiable functions, and its null space contains all $(r-1)$th order polynomials.

\begin{definition}{}
\label{definition4} 
Let $f, f_0 \in \gF^{(r)}(V)$. Let $\delta = f - f_0 \in \gF^{(r)}(V)$, let $\delta = \sum_{j=1}^d\delta_j$, with each $\delta_j = f_j - f_{0j}$, for $j=1, \dots, d$.
\begin{enumerate}
    \item Define a subset that is contained in the space of $(r-1)$times weakly differentiable functions. 
\bea 
\gW = \set{g: [0,1] \rightarrow \R, TV^{(r)}(g) \le V}.
\bea 
Clearly, we have $f_j \in \gW$, $f_{0j} \in \gW$, and $\delta_j \in \gW$.
\item Define a subspace of $\R^n$ that spanned by functions in $\gW$ evaluated at points $X_j = (X^1_j, \dots, X^n_j)$ as
\beq
\label{ref:defofw_j}
W_j = \Span{(g(X^1_j), \dots, g(X^n_j)), g \in \gW}.
\beq   

For any $\delta_j$,  let the vector $\delta_j(X_j)$ denote $\delta_j$ evaluated at points $X_j = (X^1_j, \dots, X^n_j)$:
\bea 
\delta_j(X_j) \coloneqq (\delta_j(X^1_j), \dots,  \delta_j(X^n_j)).
\bea

Let the $n$-dimensional vector obtained by adding $\delta_j(X_j)$ be denoted as 
\bea 
\delta(X) \coloneqq  \Bigl(\sum_{j=1}^d\delta_j(X^1_j), \dots,  \sum_{j=1}^d\delta_j(X^n_j)\Bigr) = (\delta(X^1), \dots,  \delta(X^n)).
\bea  
This implies that $\delta(X) = \sum_{j=1}^d \delta_j(X_j), \delta_j \in \gW$. We will see $\delta(X^i)$ equals to  $\sum_{j=1}^d\delta_j(X^i_j)$.

\item Let $\phi_{\ell}(\nu)$ be the  $\ell$th degree  polynomials $\ell \leq (r - 1)$, Define $P_j$ be the space spanned by all $(r-1)$th order polynomials evaluated at points $X_j = (X^1_j, \dots, X^n_j)$ as 
\beq 
\label{def:p_j}
P_j \coloneqq \Span{(\phi_{\ell}(X^1_j), \dots, \phi_{\ell}(X^n_j)), l = 0, \dots, r-1}.
\beq 
\item Define the orthogonal complement of $P_j$ in terms of $\norm{\cdot}_n$ as 
\beq
\label{def:q_j}
Q_j \subseteq W_j: Q_j= P_j^{\perp}, \quad Q_j \oplus P_j = W_j.
\beq 
 
In other words, for any $f_j(X_j) \in P_j$  and $g_j(X_j) \in Q_j$, it holds 
$\innerprod{f_j}{g_j}_n = 0$. 
\item Define the orthogonal projection operators applied to $W_j$ for $P_j$ and $Q_j$ to be
\beq
\label{ref:defofpip_j}
\Pi_{P_j}: \Pi_{P_j}t \in P_j,  \quad \Pi_{Q_j}: \Pi_{Q_j}t \in Q_j, \quad \text{with}\quad t \in W_j.
\beq 
\item  
For any $\delta \in \gF^{(r)}(V)$, with the vector $\delta(X) = (\delta(X^1), \dots,  \delta(X^n)) \in \R^n$, with $\Pi_{P_j}$ and $\Pi_{Q_j}$ defined in \eqref{ref:defofpip_j}, we define $\Pi_P$ 
and $\Pi_Q$ as 
\beq
\label{ref:defofpip}
\Pi_P: \Pi_P \delta(X)= \Pi_{P_1}\delta_1(X_j^{(1)}) + \dots +  \Pi_{P_d}\delta_d(X_d),
\beq 
and 
\beq
\label{ref:defofpiq}
\Pi_Q: \Pi_Q \delta(X)= \Pi_{Q_1}\delta_1(X_j^{(1)}) + \dots +  \Pi_{Q_d}\delta_d(X_d). 
\beq 
Then we denote 
\bea 
&p_j \coloneqq \Pi_{P_j}\delta_j(X_j) \in \R^n, \quad q_j \coloneqq \Pi_{Q_j}\delta_j(X_j)\in \R^n. 
\bea 
And let $p^i = (p^i_1, \dots, p^i_d), \quad q^i = (q^i_1, \dots, q^i_d),$
where $p^i_1,\dots, p^i_d$ are the elements of $p^i$, and $q^i_1, \dots, q^i_d$ are the elements of $q^i$. 
And denote 
\bea 
&p \coloneqq p_1 + \dots + p_d \in \R^n,\quad q \coloneqq q_1 + \dots + q_d\in \R^n. 
\bea 
So we have $\delta_j(X_j) = p_j + q_j$,  $\Pi_{P}\delta(X) = p$, and $\Pi_{Q}\delta(X) = q$. 
\end{enumerate}
\end{definition}

\begin{lemma}
\label{lemma15}
    Let $v_j \in \R^{n\times r}$ be a matrix whose $i$th row is given by 
\beq 
(1, X^{(i)}_j, (X^{(i)}_j)^2, \dots, (X^{(i)}_j)^{r-1}) \in \R^r,
\beq where $X^{(i)}_j, i = 1, \dots, n$ are the sorted data points in $[0,1]$.  By the definition of $P_j$ in Definition~\ref{definition4},
it holds that 
\beq P_j =
\Span{ \text{columns of } v_j}. 
\beq
Furthermore, for $D^{(X_j,r)}_n$, it holds  $\Span{ \text{columns of } v_j} = \text{null}(D^{(X_j,r)}_n)$.
\end{lemma}
\begin{proof} 
Clearly, $P_j$ equals to the column space of $v_j$. 


We know $D^{(X_j, r)}_n \in \R^{(n-r) \times n}$ as outlined in \ref{eq:higher_order} is the discrete difference operator. 
It is evident that for any $\nu \in \Span{\text{columns of } v_j}$, we have $D^{(X_j, r)}_n\nu = 0$. 
This follows from Lemma 1 and Assumption C1 in \cite{sadhanala2019additive}. 
Lemma 1 in \cite{sadhanala2019additive} expresses the $TV^{(r)}$ of a function $f$ in terms of the falling factorial basis. 
Since the $TV^{(r)}$ of an $(r-1)$th order polynomial is zero, the result follows.
Furthermore, the dimension of $\Span{ \text{columns of } v_j}$ is equal to $r$. 
By the definition of $D^{(X_j, r)}_n$, the dimension of $\text{row}(D^{(X_j, r)}_n)$ is equal to $n-r$. Then the dimension of the $\text{null}(D^{(X_j, r)}_n)$ is equal to $n-(n-r) = r$. Thus, we have $\Span{ \text{columns of } v_j} = \text{null}(D^{(X_j,r)}_n)$.


\end{proof}

   
  

\begin{lemma}
\label{lemma11}
For $\gF^{(r)}(V)$ defined in Definition~ \eqref{space_F}, and for the $f_0 \in \gF^{(r)}(V)$, 
\begin{enumerate}[label=(\alph*)]
\item For any $t>0$, let a space of functions be defined as 
\beq
\label{eq672}
\gD(t^2)\coloneqq \set{\delta = f - f_0: \Delta(\delta)\le t^2, f \in \gF^{(r)}(V)}, 
\beq  
We have  
\beq
\label{decomposition_t1_t2}
\gR_n(\set{\gF^{(r)}(V) - f_0} \cap \gD(t^2)) & \le 
T_1 + T_2,
\beq  with $T_1$ and $T_2$ are given below
\bea 
T_1 \coloneqq \ept{\sup_{\delta \in \gF^{(r)}(2V)\cap \gD(t^2)}  \xi^{\top} \Pi_{P}\delta(X)|\set{X^i}_{i=1}^n},  
T_2 \coloneqq \ept{\sup_{\delta \in \gF^{(r)}(2V)\cap \gD(t^2)}  \xi^{\top} \Pi_{Q}\delta(X)|\set{X^i}_{i=1}^n}.
\bea 

\item For $K^{(X,r)}(V)$ in Definition~\eqref{space_K} and for $\gD(t^2)\coloneqq \set{\delta \in \R^n, \Delta(\delta)\le t^2}$, we have for the same $T_1$ and $T_2$  
\beq 
\gR_n(\set{K^{(X,r)}(V) - \theta_0} \cap \gD(t^2)) \le  T_1 + T_2.  
\beq 

\end{enumerate}

\end{lemma} 
\begin{proof}
With the definitions and notations in Definition~\ref{definition4}.

For $(a)$, we have 
    \beq 
    \label{eq499}
\sup_{\substack{\delta \in \gF^{(r)}(V) - f_0: \Delta^2(\delta)
\le t^2}} \xi^{\top}\delta(X) 
&\le \sup_{\delta \in \gF^{(r)}(2V)\cap \gD(t^2)}  \xi^{\top} \Pi_{P}\delta(X)  \\ &+ \sup_{\delta \in \gF^{(r)}(2V)\cap \gD(t^2)}  \xi^{\top} \Pi_{Q}\delta(X),
\beq
taking expectation conditioned on $\set{X^i}_{i=1}^n$ on both sides, 
we have 
\beq 
\label{eq513}
&\ept{\sup_{\delta \in \gF^{(r)}(V) - f_0: \Delta^2(\delta)
\le t^2} \xi^{\top}\delta(X)|\set{X^i}_{i=1}^n} 
\\& \le \underbrace{\ept{\sup_{\delta \in \gF^{(r)}(2V)\cap \gD(t^2)}  \xi^{\top} \Pi_{P}\delta(X)|\set{X^i}_{i=1}^n}}_{T_1}  + \underbrace{\ept{\sup_{\delta \in \gF^{(r)}(2V)\cap \gD(t^2)}  \xi^{\top} \Pi_{Q}\delta(X)|\set{X^i}_{i=1}^n}}_{T_2}.
\beq  
For $(b)$, We have 
    \beq 
    \label{eq507}
\sup_{\delta \in K^{(X,r)}(V) - \theta_0: \Delta^2(\delta)
\le t^2} \xi^{\top}\delta(X) 
&\le \sup_{\delta \in B_{D^{(X,r)}_n}(2V)\cap \gD(t^2)}  \xi^{\top} \Pi_{P}\delta(X)  \\ &+ \sup_{\delta \in B_{D^{(X,r)}_n}(2V)\cap \gD(t^2)}  \xi^{\top} \Pi_{Q}\delta(X),
\beq 

taking expectation conditioned on $\set{X^i}_{i=1}^n$ on both sides, 
we have 
\beq 
\label{eq531}
&\ept{\sup_{\delta \in \gF^{(r)}(V) - f_0: \Delta^2(\delta)
\le t^2} \xi^{\top}\delta(X)|\set{X^i}_{i=1}^n} 
\\&\le \underbrace{\ept{\sup_{\delta \in \gF^{(r)}(2V)\cap \gD(t^2)}  \xi^{\top} \Pi_{P}\delta(X)|\set{X^i}_{i=1}^n}}_{T_1} + \underbrace{\ept{\sup_{\delta \in \gF^{(r)}(2V)\cap \gD(t^2)}  \xi^{\top} \Pi_{Q}\delta(X)|\set{X^i}_{i=1}^n}}_{T_2}.
\beq  

Thus, the rest proof will proceed to bound each terms individually.  
\end{proof}

\begin{lemma}
\label{lemma17}
Let $\delta \in \gF^{(r)}(V)$. Then 
\beq 
\norm{\delta}_n^2 \le \max\set{\norm{\delta}_{\infty}, 1}\Delta^2(\delta)n^{-1}. 
\beq 
\end{lemma}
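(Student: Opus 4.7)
The plan is to establish the inequality through a simple pointwise comparison that is then averaged. Note that by definition, $\Delta^2(\delta) n^{-1} = \frac{1}{n}\sum_{i=1}^n \min\set{\abs{\delta(x_i)}, \delta(x_i)^2}$ and $\norm{\delta}_n^2 = \frac{1}{n}\sum_{i=1}^n \delta(x_i)^2$, so after clearing the $1/n$ on both sides, it suffices to prove the pointwise bound
\beq
\label{eq:ptwise}
\delta(x_i)^2 \;\le\; \max\set{\norm{\delta}_{\infty}, 1}\cdot\min\set{\abs{\delta(x_i)},\, \delta(x_i)^2} \qquad \text{for all } i=1,\dots,n,
\beq
and then sum over $i$ and divide by $n$.

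To verify \eqref{eq:ptwise}, I would split into two cases based on the magnitude of $\delta(x_i)$. In the first case, when $\abs{\delta(x_i)} \le 1$, we have $\min\set{\abs{\delta(x_i)}, \delta(x_i)^2} = \delta(x_i)^2$, so the right-hand side of \eqref{eq:ptwise} is $\max\set{\norm{\delta}_\infty,1}\,\delta(x_i)^2$, which is at least $\delta(x_i)^2$ since $\max\set{\norm{\delta}_\infty,1}\ge 1$. In the second case, when $\abs{\delta(x_i)} > 1$, we have $\min\set{\abs{\delta(x_i)}, \delta(x_i)^2} = \abs{\delta(x_i)}$, and \eqref{eq:ptwise} reduces to $\abs{\delta(x_i)} \le \max\set{\norm{\delta}_\infty,1}$, which holds because $\abs{\delta(x_i)} \le \norm{\delta}_\infty \le \max\set{\norm{\delta}_\infty,1}$.

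There is no real obstacle here: the lemma is essentially a bookkeeping fact about the Huber-type quantity $\Delta^2$ versus the squared empirical norm, and the membership $\delta \in \gF^{(r)}(V)$ plays no structural role beyond ensuring that $\norm{\delta}_\infty$ is well defined. The whole argument is a one-line case split followed by summation, so the write-up should be short.
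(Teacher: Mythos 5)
Your proof is correct and is essentially the paper's argument: the paper performs the same case split on $\abs{\delta(x_i)} \le 1$ versus $\abs{\delta(x_i)} > 1$, just organized as a split of the sum $n\norm{\delta}_n^2$ into two index sets rather than as a pointwise inequality that is then summed. No substantive difference.
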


\begin{proof}
We notice that 
\beq 
    n\norm{\delta}_n^2 &= \sum_{i: \abs{\delta(X^i)} \le 1} \abs{\delta(X^i)}^2 + \sum_{i: \abs{\delta(X^i)} > 1} \abs{\delta(X^i)}^2 \\
    &\le \sum_{i: \abs{\delta(X^i)} \le 1} \abs{\delta(X^i)}^2 + \norm{\delta}_{\infty}\sum_{i: \abs{\delta(X^i)} > 1} \abs{\delta(X^i)}\\
    &\le \max \set{\norm{\delta}_{\infty}, 1} \paren{\sum_{i: \abs{\delta(X^i)} \le 1}\abs{\delta(X^i)}^2 + \sum_{i: \abs{\delta(X^i)} > 1} \abs{\delta(X^i)}} \\
    &= \max \set{\norm{\delta}_{\infty}, 1} \Delta^2(\delta).
\beq 
Thus we conclude that 
\beq 
\norm{\delta}_n \le \max\set{\norm{\delta}_{\infty}^{1/2}, 1}\Delta(\delta)n^{-1/2}. 
\beq 
\end{proof}

\begin{lemma}
\label{lemma18}
    If $p_j \in P_j$ with $\norm{p_j} = 1$, where $p_j$, $P_j$ follow the definitions and notations in Definition~\ref{definition4}, then we have 
\beq 
\norm{p_j}_{\infty} \le \frac{c_3\log n}{n^{1/2}},  
\beq 
where $c_3$ depends on the degree $r$ of $P_j$.
\end{lemma}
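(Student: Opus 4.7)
The plan is to pass from the coordinate vector $p_j \in \R^n$ to an underlying polynomial $p$ of degree at most $r-1$ on $[0,1]$, and then combine equivalence of norms on this finite-dimensional space with the grid regularity from Assumption~\ref{assumption7}. By Lemma~\ref{lemma15}, every $p_j \in P_j$ has the form $p_j = (p(x_{1j}),\dots,p(x_{nj}))$ for some polynomial $p$ of degree at most $r-1$, and on the $r$-dimensional space of such polynomials restricted to $[0,1]$ all norms are equivalent; in particular there is a constant $c(r)$ with $\norm{p}_\infty \le c(r)\norm{p}_{L^2([0,1])}$, while Markov's inequality (applied after the affine rescaling from $[-1,1]$ to $[0,1]$) yields $\norm{p'}_\infty \le c(r)\norm{p}_\infty$, so both $p$ and $p^2$ are Lipschitz with constants controlled by $\norm{p}_\infty$.

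Next, I would Riemann-approximate $\int_0^1 p^2\,du$ by a sum over the sorted grid $0 \le x_{1j} < \dots < x_{nj} \le 1$. On each interval $[x_{(i-1)j},x_{ij}]$ the Lipschitz bound on $p^2$ gives $p(u)^2 \le p(x_{ij})^2 + c(r)w\norm{p}_\infty^2$ for $u \in [x_{(i-1)j},x_{ij}]$, and integrating and summing yields
\beq
\norm{p}_{L^2([0,1])}^2 \;\le\; w \sum_{i=1}^n p(x_{ij})^2 \;+\; c(r)\,w\,\norm{p}_\infty^2 \;=\; w\,\norm{p_j}^2 + c(r)\,w\,\norm{p}_\infty^2,
\beq
after absorbing the two boundary segments $[0,x_{1j}]$ and $[x_{nj},1]$ (each of length at most $w$) into the $c(r)w\norm{p}_\infty^2$ term and using $w \lesssim \log n / n$ from Assumption~\ref{assumption7}. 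Substituting $\norm{p_j} = 1$ together with $\norm{p}_\infty^2 \le c(r)^2 \norm{p}_{L^2}^2$ and absorbing the resulting $c(r)^3 w\,\norm{p}_{L^2}^2$ term into the left-hand side (legitimate once $n$ is large enough that $c(r)^3 w \le 1/2$) gives $\norm{p}_{L^2}^2 \le 2w$, and hence $\norm{p_j}_\infty \le \norm{p}_\infty \le c(r)\sqrt{2w} \le c(r)\sqrt{\log n / n} \le c(r)\,\log n / \sqrt{n}$, as required. The small-$n$ regime can be absorbed into the constant $c(r)$.

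The main obstacle will be (i) handling the boundary intervals $[0,x_{1j}]$ and $[x_{nj},1]$ cleanly, which I would do by treating them as degenerate ``gaps'' of length at most $w$ under Assumption~\ref{assumption7}, and (ii) tracking that every constant depends only on $r$ and never on $n$ or the particular configuration of the grid. Both are routine once Markov's inequality and finite-dimensional norm equivalence on the space of polynomials of degree at most $r-1$ are in hand. The argument actually delivers the slightly stronger bound $c(r)\sqrt{\log n / n}$, which indicates that the stated $\log n / \sqrt{n}$ factor in the lemma is not tight but is more than sufficient for the downstream Rademacher-width computations.
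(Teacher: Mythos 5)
Your proof is correct and follows essentially the same route as the paper's: both compare the empirical $\ell_2$ norm of $p_j$ to the continuous $L^2$ norm of the underlying degree-$(r-1)$ polynomial via a Riemann-sum approximation driven by the mesh bound $w \lesssim \log n / n$ of Assumption~\ref{assumption7}, and then exploit finite-dimensionality of the polynomial space (you via abstract norm equivalence plus Markov's inequality, the paper via an explicit orthonormal polynomial basis and a bound on its coefficient vector). Your bookkeeping of the Riemann error --- summing $\sum_i \abs{A_i} \le 1$ rather than $n w^2$ --- does give the slightly sharper $\sqrt{\log n / n}$, consistent with your closing remark, but this changes nothing downstream.
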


\begin{proof}
   Let $\phi_{\ell}(\nu)$ be the  $\ell$th degree  orthogonal polynomials $\ell \leq (r - 1)$ which have a domain
in $[0, 1]$ and satisfy 
\beq 
\int_{0}^{1}\phi_{\ell}(\nu)\phi_{\ell'}(\nu)dt =  \ind{l=l'}, 
\beq 
then by Lemma~\ref{lemma15}, $p_j$ can be expressed as 
\beq 
p_{ij} = \sum_{l=0}^{r-1}a_l \phi_{\ell}(X^{(i)}_j), 
\beq for $i=1, \dots, n$ and $a_0, \dots, a_{r-1} \in \R$. 

Let $g_j: \R \rightarrow \R$ be defined as 
\beq 
\label{eq:g_j}
g_j(\nu) = \sum_{l=0}^{r-1} a_l \phi_{\ell}(\nu), 
\beq 
which means $g_j(X^{(i)}_j) = p_{ij}$ for all $j = 1, \dots d$. 

If we split the interval $[0,1]$ into $n$ pieces, and let  $A_1 = [0, X^{(1)}_j)$, $A_i = [X^{(i-1)}_j, X^{(i)}_j)$ for all $i >1$,  and define 
\beq 
w = \max_{\substack{i=1,\dots, n-1\\ j = 1, \dots, d}} \abs{X^{(i)}_j - X^{(i-1)}_j}.
\beq  
then we have,

\beq 
{}&\abs{\sum_{l=0}^{r-1}a_l^2 - \sum_{i=1}^n g_j(X^{(i)}_j)^2(X^{(i)}_j - X^{(i-1)}_j)} \\ &= \abs{ \int_{0}^{1} g_j(u)^2du - \sum_{i=1}^n g_j(X^{(i)}_j)^2(X^{(i)}_j - X^{(i-1)}_j)}\\
&= \abs{\sum_{i=1}^n \int_{A_i}g_j(u)^2du -  \sum_{i=1}^n \int_{A_i} g_j(X^{(i)}_j)^2 du}\\
&\le \sum_{i=1}^n \int_{A_i}\abs{g_j(u)^2 - g_j(X^{(i)}_j)^2}du\\
&\le  \sum_{i=1}^n \int_{A_i} \norm{ (g_j^2)'}_{\infty}\abs{u - X^{(i)}_j}du\\ 
&\le  \sum_{i=1}^n \norm{  (g_j^2)'}_{\infty} \frac{(u-X^{(i)}_j)^2}{2}\mid_{u= X^{(i)}_j}^{X^{(i-1)}_j}\\ 
& \lesssim \norm{  (g_j^2)'}_{\infty}\frac{\log^2 n}{n}. 
\beq 
The first equality follows from the definition of $g_j$ in \eqref{eq:g_j}.

The last inequality follows since
\beq 
w  \lesssim \frac{c\log n}{n}, 
\beq by the Assumption~\ref{assumption1}. 

By applying this, we have  
\beq
\abs{X^{(i-1)}_j - X^{(i)}_j} \le \frac{\log n}{n}. 
\beq 

At the same time, we have 
\beq 
  \left( g(\nu)^2 \right)^{'} = \bigg(\sum_{l=0}^{r-1}a_l^2\phi_{\ell}(\nu)^2 + \sum_{l\ne l'}a_l a_{l'} \phi_{\ell}(\nu)\phi_{\ell'}(\nu)\bigg)^{'}
\beq 
Thus, the above inequality is continued as 
\beq 
\abs{\sum_{l=0}^{r-1}a_l^2 - \sum_{i=1}^n g_j(X^{(i)}_j)^2(X^{(i)}_j - X^{(i-1)}_j)} 
&\lesssim \frac{\log^2 n}{n}\norm{  (g_j^2)'}_{\infty} \\
&\le \frac{c_2\log^2 n\norm{a}^2_{\infty}}{n} \le \frac{c_2\log^2 n}{n}\sum_{l=0}^{r-1}a_l^2, 
\beq for some constant $c_2>0$ that only depends on $r$. Thus for large enough $n$,
we can have $\sum_{l=0}^{r-1}a_l^2 \leq \sum_{i=1}^n g_j(X^{(i)}_j)^2(X^{(i)}_j - X^{(i-1)}_j)$, or  
\bea 
\sum_{l=0}^{r-1}a_l^2 \lesssim \sum_{i=1}^n g_j(X^{(i)}_j)^2(X^{(i)}_j - X^{(i-1)}_j) + \frac{c_2\log^2 n}{n}\sum_{l=0}^{r-1}a_l^2, 
\bea which further implies 
\bea 
\sum_{l=0}^{r-1}a_l^2 \lesssim \frac{1}{1-\frac{c_2\log^2 n}{n}} g_j(X^{(i)}_j)^2(X^{(i)}_j - X^{(i-1)}_j). 
\bea 
Thus, overall, we have 

\beq
\label{eq816}
\sum_{l=0}^{r-1}a_l^2 &\lesssim \frac{1}{1-\frac{c_2\log^2 n}{n}}\sum_{i=1}^n g_j(X^{(i)}_j)^2(X^{(i)}_j - X^{(i-1)}_j)\\
& \le \frac{1}{1-\frac{c_2\log^2 n}{n}}\sum_{i=1}^n g_j(X^{(i)}_j)^2 \max_j \abs{X^{(i)}_j - X^{(i-1)}_j} \\
& \lesssim \frac{1}{1-\frac{c_2\log^2 n}{n}}\frac{\log n}{n}\sum_{i=1}^n g_j(X^{(i)}_j)^2 \\
&\leq \frac{1}{1-\frac{c_2\log^2 n}{n}}\frac{\log n}{n}.
\beq
The last inequality holds by $\norm{p_j}_2 = 1$.
Furthermore, for large $n$, we have 
\bea 
\frac{1}{1-\frac{c_2\log^2 n}{n}} \leq  c_2\log n.
\bea 
Thus, \eqref{eq816} can be further bounded as 
\bea 
\sum_{l=0}^{r-1}a_l^2 &\lesssim  \frac{c_2\log^2 n}{n},
\bea 
where we have absorbed the constants that depend on $r$ and other constants into a single $c_2$.  
Therefore, 
\beq 
\norm{p_j}_{\infty} & = \max_{i = 1, \dots, n} \abs{p_{ij}} = \max_{i = 1, \dots, n} \abs{\sum_{l=0}^{r-1}a_l \phi_{\ell}(X^{(i)}_j)}
\\ & \le \max_{i = 1, \dots, n}\sum_{l=0}^{r-1}\abs{g_{j\ell}(X^{(i)}_j)}\abs{a_l} \\ &\le \norm{a}_{\infty} \max_{x \in [0,1]} \sum_{l=0}^{r-1}\abs{g_{j\ell}(x)}\\  &\le   \frac{c_2^{1/2}(\log n)}{n^{1/2}}\max_{x\in[0,1]} \sum_{l=0}^{r-1}\abs{g_{j\ell}(x)}\\
&\le \frac{c_3^{1/2}(\log n)}{n^{1/2}},
\beq  where the third inequality holds by using $\norm{a}_{\infty} \le \sqrt{\sum_{l=0}^{r-1} a_l^2}$ and the last inequality holds because of $\max_{x\in[0,1]} \sum_{l=0}^{r-1}\abs{g_{j\ell}(x)} = O(1)$ then the claim follows. 
\end{proof}

The following \cref{lem:hinv}, \cref{lem:dpinv} and \cref{lemma19} will show that the $\norm{q_j}_{\infty} \le V$.

We reserve the letter $S$ save for $X_j$. 
We will first define, for $r \geq 2$, 
\bea 
\Xi^{(r)} = \text{diag}&\left(S^{(r)} - S^{(1)}, \dots,S^{(n)} - S^{(n-r+1)}\right). 
\bea 
We then define the falling factorial basis matrix,
$H \in \R^{n\times n}$, by
\begin{equation}
\label{eq:h}
H_{ik} = h_k(S^{(i)}), \;\;\; k,i=1,\ldots n,
\end{equation} here $h_k$ denotes the falling factorial basis over $S$, as defined in Definition~\ref{f_f_r}. 

\begin{lemma}[Lemma 2 in \cite{wang2014falling}]
\label{lem:hinv}
If $H^{(r-1)}$ is the $(r-1)$th order falling factorial basis matrix
defined over the ordered inputs
\( S^{(1)} < \dots < S^{(n)} \), and $D^{(S, r)}_n$ is the $(r)$th order discrete
difference operator defined over the same inputs, then
\begin{equation}\label{eq:invH}
(H^{(r-1)})^{-1} = \left[\begin{array}{c} C' \\
\frac{1}{(r-1)!} \cdot D^{(S, r)}_n
\end{array}\right],
\end{equation}
for an explicit matrix $C' \in \R^{r\times n}$.  If we let $A_i$
denote the $i$th row of a matrix $A$, and $e_i$ be element of the canonical basis of subspace of $\R^{n}$,  then $C'$ has first
row $C'_1 = e_1^\top$, and subsequent rows
\begin{equation*}
C'_{i} =
\left[ \frac{1}{(i-2)!} \cdot (\Xi^{(i)})^{-1} \cdot D^{(S,i-1)}_n\right],
\;\;\; i=2,\ldots r.
\end{equation*}
Then the last $n-r$ rows
of $(H^{(r-1)})^{-1}$ are given  by $D^{(S, r)}_n/(r-1)!$.
\end{lemma}

\begin{lemma}[Lemma 13 in \cite{wang2015trend}]
		\label{lem:dpinv}
		Let $\Pi$ be the projection onto $\text{row}(D^{(S,r)}_n)$, the $(r)$th order discrete difference operator has pseudoinverse   
		$$(D^{(S,r)}_n)^\ddagger =  \Pi H_2^{(r-1)} / (r-1)!,$$
		where 
		\smash{$H_2^{(r-1)} \in \R^{n\times (n-r)}$} is the last $n-r$   
		columns of the $r-1$th order falling factorial basis matrix $H^{(r-1)}$.  
	\end{lemma}
	
	\begin{proof} The proof follows a same approach to Lemma 13 in \cite{wang2015trend}; for the reader's convenience, we provide it here again. 
		We abbreviate $D=D^{(S, r)}_n$, and consider the linear system
		\begin{equation}
		\label{eq:ddt}
		DD^\top x = Db
		\end{equation}
		in $x$, where $b\in\R^n$ is arbitrary. We seek an expression for
		$x=(DD^\top)^{-1} D^\top = (D^\dag)^\top b$, and this will tell us the 
		form of $D^\dag$.

        Define
		\begin{equation*}
		\tilde D = \left[\begin{array}{c} C \\ D \end{array}\right] \in
		\R^{n\times n},
		\end{equation*}
		where $C \in \R^{r\times n}$ is equal to $(r-1)!C'$ where $C'$ is the matrix that collects the first row of each lower order difference operator, defined in Equation~\eqref{eq:invH}. From \cref{lem:hinv}, we know that
		\begin{equation*}
		\tilde D^{-1} = H/(r-1)!,
		\end{equation*}
		where $H=H^{(r-1)}$ is falling factorial basis matrix of order $r-1$,
		evaluated over inputs
\( S^{(1)} < \dots < S^{(n)} \).  With this in mind, consider the
		expanded linear system
		\begin{equation}
		\label{eq:ddt2}
		\left[\begin{array}{cc} CC^\top & CD^\top \\
		DC^\top & DD^\top \end{array}\right]
		\left[\begin{array}{c} w \\ x \end{array}\right] =
		\left[\begin{array}{c} a \\ Db \end{array}\right].
		\end{equation}
		The second equation reads
		\begin{equation*}
		DC^\top w + DD^\top x = Db,
		\end{equation*}
		and so if we can choose $a$ in \eqref{eq:ddt2} so that at the
		solution we have $w=0$, then $x$ is the solution in \eqref{eq:ddt}.
		The first equation in \eqref{eq:ddt2} reads
		\begin{equation*}
		CC^\top w + CD^\top x = a,
		\end{equation*}
		i.e.,
		\begin{equation*}
		w = (CC^\top)^{-1} ( a - CD^\top x ).
		\end{equation*}
		That is, we want to choose
		\begin{equation*}
		a = CD^\top x =CD^\top(DD^\top)^{-1} D b = C\Pi b, 
		\end{equation*}
		where $\Pi$ is the projection onto row space of $D$.
		Thus we can reexpress \eqref{eq:ddt2} as
		\begin{equation*}
		\tilde D\tilde D^\top
		\left[\begin{array}{c} w \\ x \end{array}\right] =
		\left[\begin{array}{c} C \Pi b \\ Db \end{array}\right]
		= \tilde D \Pi b
		\end{equation*}
		and, using \smash{$\tilde D^{-1} = H/(r-1)!$},
		\begin{equation*}
		\left[\begin{array}{c} w \\ x \end{array}\right] = H^\top
		\Pi b/(r-1)!.
		\end{equation*}
		Finally, writing $H_2$ for the last $n-r$ columns of $H$, we
		have $x = H_2^\top \Pi b/(r-1)!$, as desired.
	\end{proof}

\begin{lemma}
\label{lemma19}
Let $TV^{(r)}$ be defined in \ref{def:J}.  
For $\delta \in \gF^{(r)}(V)$, let $\delta = \sum_{j=1}^d\delta_j$, with $\delta_j \in B_{TV^{(r)}}(V)$ that is defined in Definition~\eqref{eq197}, 
with the definitions and notations in Definition~\ref{definition4}. Let the vector $\delta_j(X_j)\in \R^n$ be the evaluation of $\delta_j$ on the ordered inputs $X_j^{(1)}, \dots, X_j^{(n)}$. 
 Let  $D^{(X_j,r)}_n$ be the discrete trend filtering operator defined in Definition~\eqref{eq:higher_order}, let $q_j = \Pi_{Q_j} \delta_j(X_j)$, where $Q_j$ is row space of $D^{(X_j,r)}_n$, then we have 
    \beq 
    \norm{q_j}_{\infty} \le V\log n.
    \beq 

 

\end{lemma}
\begin{proof} 


We know the $\Pi_{Q_j}$ is the projection onto the $\text{row}(D^{(X_j, r)}_n)$.  
Now let  $M_j = (D^{(X_j,r)}_n)^{\ddagger} \in \R^{n \times (n-r)}$, the pseudoinverse of the $r$th order discrete difference operator. From \cref{lem:dpinv}, we know that 
    \beq 
    (D^{(X_j,r)}_n)^{\ddagger} = \Pi_{Q_j}H_{2,j}^{(r-1)}/(r-1)!, 
    \beq 
    where $H_{2,j}^{(r-1)} \in \R^{n \times (n-r)}$ contains the last $n-r$ columns of the falling factorial 
    basis matrix of order $(r-1)$, evaluated over $X^{(1)}_j, \dots, X^{(n)}_j$, such that for $i \in \set{1,\dots, n}$, and $s\in \set{1, \dots, n-r}$,  and $j \in \set{1, \dots, d}$,
    \beq 
    (H_{2,j}^{(r-1)})_{i,s} = h_{s,j}(X_j^{(i)}), 
    \beq 
    where 
    \beq 
    h_{s,j}(x) = \prod_{l=1}^{r-1}(x - x_j^{s+l})\ind{x\ge x_j^{s+l}},
    \beq where we reserve letter $x_j$ for $X_j$.  
    Then for $e_i$ an element of the canonical basis of subspace of $\R^{n}$, and $P_j = \text{null}(D^{(X_j,r)}_n)$, we have 
    \beq 
    \norm{e_i^{\top} M_j}_{\infty} &\le \norm{\Pi_{Q_j}e_i}_1 \norm{H_{2,j}}_{\infty}/(r-1)!\\
    &\le (\norm{e_i}_1 + \norm{\Pi_{P_j}e_i}_1) \norm{H_{2,j}}_{\infty}/(r-1)! \\ 
    &\le (1 + \norm{\Pi_{P_j}e_i}_1)/(r-1)!.
    \beq 
    The first inequality follows from Holder’s inequality, the second from the triangle inequality and the
last by the definition of $H_{2,j}^{(r-1)}$, with each entry is less equal to 1. 
Now, we let $\nu_1, \dots, \nu_r$ be an orthonormal basis of $P_j$. Then we have 
\beq 
\norm{\Pi_{P_j} e_i}_1 = \norm{\sum_{j=1}^r (e_i^{\top}\nu_j)\nu_j}_1 \le \sum_{j=1}^r \norm{\nu_j}_{\infty} \norm{\nu_j}_1 \le \sum_{j=1}^r\norm{\nu_j}_{\infty} n^{1/2}.
\beq 
Based on Lemma~\ref{lemma18}, we have obtain that 
\beq 
\label{eq:m_j}
\norm{M_j}_{\infty} = \max_{s = 1\dots n-r } \max_{i = 1, \dots, n}  (M_j)_{i,s} =  \max_{i = 1, \dots, n} \norm{e_i^{\top} M_j}_{\infty} = O(\log n). 
\beq
Then by the equivalence of Problems~\eqref{eq303} and \eqref{eq298}, 
for $\delta_j \in B_{TV^{(r)}}(V)$, we have 
\beq 
\label{eq3840}
D^{(X_j,r)_n} \delta_j(X_j) \in  B_{D^{(X,r)}_n}(V), 
\beq where $B_{D^{(X,r)}_n}(V)$ is defined in Definition~\eqref{def:ball of trend filtering}. 
Then we get for any $\delta_j \in B_{TV^{(r)}}(V)$, we have 
\beq 
\norm{\Pi_{Q_j} \delta_j(X_j)}_{\infty} = \norm{(D^{(X_j,r)}_n)^{\ddagger} D^{(X_j,r)}_n \delta_j(X_j)}_{\infty} \le \norm{M_j}_{\infty} \norm{D^{(X_j,r)}_n \delta_j(X_j)}_1 \le V \log n .
\beq
The last inequality follows from the \eqref{eq3840} and \eqref{eq:m_j}. 
\end{proof}

\begin{lemma}
\label{lemma20} For $\delta \in \gF^{(r)}(V)$,  let $\delta = \sum_{j=1}^d\delta_j$, with $\delta_j \in B_{TV^{(r)}}(V)$ that is defined in Definition~\eqref{def:J}. 
Let $g_{j1}, \dots, g_{jr}$ be the orthonormal basis for $P_j$ such that $\norm{g_{j\ell}} = 1, \text{ for } l \in \set{1,\dots, r}$, with the definitions and notations in Definition~\ref{definition4}.
Denote 
$ 
\alpha_{jl} = \innerprod{\delta_j(X_j)}{ g_{j\ell}}_2,
$
and put all $\alpha_{jl}$ into a vector 
\beq 
\alpha = (\alpha_{11},\dots, \alpha_{1r},\dots, \alpha_{d1},\dots, \alpha_{dr}). 
\beq 
Then we have 
\beq 
\norm{\alpha}_2 \le \frac{\norm{p}_2}{\lambda_{\min}(\Gamma^{\top}\Gamma)^{1/2}} = \frac{\norm{p}_n}{\lambda_{\min}(\frac{1}{n}\Gamma^{\top}\Gamma)^{1/2}} \le \frac{\max\set{\norm{\delta}_{\infty}^{1/2}, 1}\Delta(\delta)n^{-1/2} + V\log n}{\lambda_{\min}(\frac{1}{n}\Gamma^{\top}\Gamma)^{1/2}}, 
\beq 
where $\Gamma \in \mathbb{R}^{n\times rd}$ is a matrix constructed from the basis  $g_{j1}, \dots, g_{jr}$ for each $j \in \{1, \dots, d\}$, such that the columns of $\Gamma$ consist of $g_{j1}, \dots, g_{jr}$ for all $j \in \{1, \dots, d\}$. 
\end{lemma}
\begin{proof}


Let  $\text{col}(v_j)$ be the column space of $v_j$, where $v_j$ is defined in \cref{lemma18}. 
Let $g_{j1}, \dots, g_{jr}$ be the orthonormal basis for  $\text{col}(v_j)$  by taking Gram-Schmidt Procedure, since $P_j = \text{col}(v_j)$, so $g_{j1}, \dots, g_{jr}$ is a set of basis for $P_j$ that 
\beq 
   \norm{g_{j\ell}} = 1, \quad l \in \set{1,\dots, r}.
\beq 

Recall
\beq 
\alpha_{jl} = \innerprod{\delta_j(X_j)}{ g_{j\ell}}_2,
\beq 
and put all $\alpha_{jl}$ into a vector 
\beq 
\alpha = (\alpha_{11},\dots, \alpha_{1r},\dots, \alpha_{d1},\dots, \alpha_{dr}) \in \R^{rd}.
\beq 
Then we have 
\beq 
\label{ref:prepresentation}
    p = \Pi_{P_1}\delta_1(X_j^{(1)}) + \dots +\Pi_{P_d}\delta_d(X_d)  = \sum_{j=1}^d\sum_{l=1}^r \innerprod{\delta_j(X_j)}{ g_{j\ell}}_2 g_{j\ell} = \Gamma\alpha, 
    \beq where $\Gamma \in \R^{n\times rd}$ the basis matrix constructed by basis $g_{j1}, \dots, g_{jr}$, with $j=1,\ldots, d$. 
We have 
\beq 
\norm{p} = \norm{\Gamma\alpha},
\beq and 
\beq 
\norm{p} \ge \lambda_{\min}(\Gamma^{\top}\Gamma)^{1/2}\norm{\alpha}, 
\beq 
thus, we have 
\beq 
\norm{\alpha} \le \frac{\norm{p}}{\lambda_{\min}(\Gamma^{\top}\Gamma)^{1/2}} = \frac{\norm{p}_n}{\lambda_{\min}(\frac{1}{n}\Gamma^{\top}\Gamma)^{1/2}}.
\beq 
By Lemma~\ref{lemma17}, Lemma~\ref{lemma19}, the triangle inequality, we have 
\beq 
\norm{p}_n \le \norm{q}_n + \norm{\delta}_n \le \max\set{\norm{\delta}_{\infty}^{1/2}, 1}\Delta(\delta)n^{-1/2} + V\log n
\beq 

Thus, we have 
\beq 
\label{eq:alpha}
\norm{\alpha} \le \frac{\max\set{\norm{\delta}_{\infty}^{1/2}, 1}\Delta(\delta)n^{-1/2} + V\log n}{\lambda_{\min}(\frac{1}{n}\Gamma^{\top}\Gamma)^{1/2}}.
\beq 
 \end{proof}


\begin{lemma}
\label{lemma21}
    Let $t>0$ and for $\delta \in \gF^{(r)}(V)$ with $\Delta^2(\delta) \le t^2$ and $\delta_j \in B_{TV^{(r)}}(V)$. Then, with the definitions and notations in Definition~\ref{definition4}, it holds that
    \beq 
\norm{p_j}_{\infty} \le  \gamma(t,d,n) \coloneqq  \frac{c_3\sqrt{d}}{\lambda_{\min}(\frac{1}{n}\Gamma^{\top}\Gamma)^{1/2}}
\paren{\frac{t \log n}{n} + \frac{t^2 \log n}{n} + \frac{V\log n}{n^{1/2}}},
\beq where $c_3>0$ is a constant depends on $r$.
\end{lemma}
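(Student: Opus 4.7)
My plan is to decompose $p_j$ in the orthonormal basis introduced in Lemma~\ref{lemma20}, apply the uniform bound of Lemma~\ref{lemma18} to each basis vector, and then invoke Lemma~\ref{lemma20} to control the coefficient vector. Specifically, writing $p_j = \sum_{l=1}^r \alpha_{jl}\,q_j^l$ with $\{q_j^l\}_{l=1}^r$ the $\norm{\cdot}_2$-orthonormal basis of $P_j$ and $\alpha_{jl} = \innerprod{\delta_j}{q_j^l}_2$, the triangle inequality followed by Cauchy--Schwarz gives
\beq
\norm{p_j}_\infty \,\le\, \sum_{l=1}^r \abs{\alpha_{jl}}\,\norm{q_j^l}_\infty \,\le\, \sqrt{r}\,\norm{\alpha}_2\,\max_{l}\norm{q_j^l}_\infty,
\beq
using $\norm{\alpha_j}_2 \le \norm{\alpha}_2$.

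Each $q_j^l$ is a unit-norm element of $P_j$, so Lemma~\ref{lemma18} yields $\norm{q_j^l}_\infty \le c(r)\log n/\sqrt{n}$, while Lemma~\ref{lemma20} bounds $\norm{\alpha}_2$ by a multiple of $\max\{\norm{\delta}_\infty^{1/2},1\}\Delta(\delta)n^{-1/2}+c(r,R)V$ divided by $\lambda_{\min}(n^{-1}\mathbbmss{U}^\top\mathbbmss{U})^{1/2}$. To match the announced shape I use the hypothesis $\Delta^2(\delta)\le t^2$, i.e.\ $\sum_i \min\{\abs{\delta_i},\delta_i^2\}\le t^2$, which coordinatewise forces $\min\{\abs{\delta_i},\delta_i^2\}\le t^2$ and therefore $\abs{\delta_i}\le\max\{t,t^2\}$: if $\abs{\delta_i}\le 1$ then $\delta_i^2\le t^2$ gives $\abs{\delta_i}\le t$, and otherwise $\abs{\delta_i}\le t^2$. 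Hence $\norm{\delta}_\infty \le \max\{t,t^2\}$, and splitting on whether $t<1$ or $t\ge 1$ yields $\max\{\norm{\delta}_\infty^{1/2},1\}\Delta(\delta)\lesssim t+t^2$.

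Combining these estimates multiplies the bound on $\norm{\alpha}_2$ by $\sqrt{r}\,c(r)\log n/\sqrt{n}$, so that $\norm{p_j}_\infty$ is controlled by a constant times $\lambda_{\min}(n^{-1}\mathbbmss{U}^\top\mathbbmss{U})^{-1/2}\,[(t+t^2)\log n/n + c(r,R)V\log n/\sqrt{n}]$. Absorbing the residual $\log n$ on the $V$-term into $c(r,R)$ and splitting $(t+t^2)\log n/n$ into $t\log n/n$ plus $t^2\log n/n$ gives exactly the three summands of $\gamma(t,d,n)$. The $\sqrt{d}$ prefactor is picked up when the decomposition $q=\sum_{j=1}^d q_j$ is tracked through Lemma~\ref{lemma19} inside the proof of Lemma~\ref{lemma20}: each $\norm{q_j}_\infty$ is bounded per-component, and aggregating the $d$ additive components in $\norm{q}_n$ via Cauchy--Schwarz pulls out a factor of $\sqrt{d}$ that propagates all the way to the final bound.

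The main obstacle is this last bookkeeping step: converting the single Huber-style factor $\max\{\norm{\delta}_\infty^{1/2},1\}\Delta(\delta)$ into the two separate $t$ and $t^2$ contributions via the $\ell_\infty$-bound on $\delta$, and carefully tracking how the $r$-, $R$-, and $d$-dependent constants combine so that they aggregate into the announced prefactor $c(r)\sqrt{d}/\lambda_{\min}(n^{-1}\mathbbmss{U}^\top\mathbbmss{U})^{1/2}$ while preserving the advertised $\log n/n$ and $V/\sqrt{n}$ scaling.
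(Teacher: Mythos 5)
Your proposal is correct and follows essentially the same route as the paper's proof: expand $p_j$ in the orthonormal basis of $P_j$, bound $\norm{q_j^l}_\infty$ by Lemma~\ref{lemma18}, control the coefficient vector via Lemma~\ref{lemma20}, and use $\Delta^2(\delta)\le t^2$ to turn $\max\{\norm{\delta}_\infty^{1/2},1\}\Delta(\delta)$ into $t+t^2$. The only differences are cosmetic bookkeeping (the paper extracts the $\sqrt{d}$ from $\norm{\alpha}_1\le\sqrt{rd}\,\norm{\alpha}_2$ rather than from the $\norm{q}_n$ aggregation, and bounds $\norm{\delta}_\infty$ by $\max\{t^2,1\}$ instead of your coordinatewise $\max\{t,t^2\}$), and your absorption of the residual $\log n$ into the $V$-term mirrors what the paper itself does.
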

\begin{proof}
 For $j=1, \dots, d$,  let $g_{j1}, \dots, g_{jr}$ be the orthonormal basis for $P_j$ in Definition~\ref{definition4} such that 
\beq 
   \norm{g_{j\ell}}_2 = 1, \quad l \in \set{1,\dots, r}.
\beq 
      
Then we have  
    \beq 
    \label{representation of polynomial}
    p = \Pi_{P_1}\delta_1(X_j^{(1)}) + \dots \Pi_{P_d}\delta_d(X_d)  = \sum_{j=1}^d\sum_{l=1}^r \innerprod{\delta_j(X_j)}{ g_{j\ell}}_2 g_{j\ell}.
    \beq 

Thus, for any $j$, we have for all $i$, if we express $g_{jl}$ in component form as 
\bea 
g_{jl} = (g_{jl}^1, g_{jl}^2, \dots, g_{jl}^n).
\bea 

\beq 
\label{eq1276}
    \abs{p_{ij}} &=\abs{\innerprod{\delta_j(X_j)}{ g_{j1}}_2 \cdot g_{j1}^i + \dots + \innerprod{\delta_j(X_j)}{ g_{jr}}_2 \cdot g_{jl}^i }\\
    &\le \sum_{l=1}^r\abs{\alpha_{jl}} \abs{g_{jl}^i}\le  \sum_{l=1}^r\abs{\alpha_{jl}} \paren{\max_{l}\norm{g_{jl}}_{\infty}} \\
    &\le \abs{\alpha}_1 \frac{c_3\log n}{n^{1/2}}\le \frac{c_3\sqrt{d}\log n}{n^{1/2}}\norm{\alpha}_2 \\
    &\le \frac{c_3\sqrt{d}\log n\paren{\max\set{\norm{\delta}_{\infty}^{1/2},1}\Delta(\delta) n^{-1} + n^{-1/2}V\log n}}{\lambda_{\min}(\frac{1}{n}\Gamma^{\top}\Gamma)^{1/2}}.
\beq 
The first inequality is by Triangle Inequality, and $\alpha_{jl}$ is defined in Lemma~\ref{lemma20}. The second inequality holds by the fact $X^i_j \in [0,1]$, and $g_{j1}, \dots, g_{jr}$ are the orthonormal basis for $P_j$, then by Lemma~\ref{lemma18}, $\norm{g_{j\ell}}_{\infty} \le \frac{c_3\log n}{n^{1/2}}$. The last inequality follows from the Lemma~\ref{lemma20}.  Also, $c_3>0$ is a constant.

Furthermore, we have 
\beq 
\norm{\delta}_{\infty} & = \max\set{ \max_{\set{i:\abs{\delta(X^i)}\ge 1}}\abs{\delta(X^i)}, \max_{\set{i:\abs{\delta(X^i)}< 1}}\abs{\delta(X^i)}} \\
&\le \max\set{\sum_{i\in\set{i:\abs{\delta(X^i)}\ge 1}} \abs{\delta(X^i)},1}\\
&\le \max\set{\sum_i \abs{\delta(X^i)}\ind{\abs{\delta(X^i)}\ge 1} + \sum_i \abs{\delta(X^i)}^2\ind{\abs{\delta(X^i)}\le 1},1} \\
&= \max\set{\Delta^2(\delta),1}\le \max\set{t^2,1}.
\beq 

Thus, we have $\norm{\delta}_{\infty}^{1/2} \le \max\set{\Delta(\delta),1} \le \max\set{t,1}.$ 

Finally, we have 
\beq 
\label{eq1295}
\sum_{l=1}^r
\innerprod{\delta_j(X_j)}{ g_{j\ell}}_2 &\le 
\sum_{l=1}^r \abs{\alpha_{jl}} \\
&\le \frac{c_3\sqrt{d}\paren{\max\set{\norm{\delta}_{\infty}^{1/2},1}\Delta(\delta) n^{-1/2} + V\log n}}{\lambda_{\min}(\frac{1}{n}\Gamma^{\top}\Gamma)^{1/2}}\\& \le 
\frac{c_3\sqrt{d}}{\lambda_{\min}(\frac{1}{n}\Gamma^{\top}\Gamma)^{1/2}}
\paren{\frac{t}{n^{1/2}} + \frac{t^2 }{n^{1/2}} + V\log n},
\beq 
where the second inequality follows from \eqref{eq:alpha} and $\alpha$ has length $rd$; the last inequality follows from the condition $\Delta(\delta) \le t$, and $\norm{\delta}_{\infty}^{1/2} \le \max\set{t,1}$, and $\max\set{t^2, t} \le t^2 + t$ for $t\ge 0$.
Also from \eqref{eq1276} we conclude that 

\beq 
\norm{p_j}_{\infty} &\le \frac{c_3\sqrt{d}\log n\paren{\max\set{\norm{\delta}_{\infty}^{1/2},1}\Delta(\delta) n^{-1} + n^{-1/2}V\log n}}{\lambda_{\min}(\frac{1}{n}\Gamma^{\top}\Gamma)^{1/2}}\\ &\le \frac{c_3\sqrt{d}\log n\paren{\max\set{\Delta^2(\delta),\Delta(\delta)} n^{-1} + n^{-1/2}V\log n}}{\lambda_{\min}(\frac{1}{n}\Gamma^{\top}\Gamma)^{1/2}} \\& \le 
\frac{c_3\sqrt{d}}{\lambda_{\min}(\frac{1}{n}\Gamma^{\top}\Gamma)^{1/2}}
\paren{\frac{t \log n}{n} + \frac{t^2 \log n}{n} + \frac{V\log n}{n^{1/2}}}. 
\beq 
 



\end{proof}

\begin{lemma}[Bounding The $T_1$]
\label{proposition2} Let $t>0$ and for $\delta \in \gF^{(r)}(V)$ with $\Delta^2(\delta) \le t^2$ and $\delta_j \in B_{TV^{(r)}}(V)$. Consider
 the definitions and notations in Definition~\ref{definition4}. 
Let $g_{j1}, \dots, g_{jr}$ be an orthonormal basis for $P_j$. 
It holds that 
\beq 
\label{T1_bound}
\ept{\sup_{\delta \in \gF^{(r)}(2V)\cap \gD(t^2)}   \xi^{\top} \Pi_{P}\delta(X)|\set{X^i}_{i=1}^n} \le   \frac{c_3d^{3/2}\sqrt{\log d}}{\lambda_{\min}(\frac{1}{n}\Gamma^{\top}\Gamma)^{1/2}}\paren{\frac{t}{n^{1/2}} + \frac{t^2 }{n^{1/2}} + V\log n},
\beq which goes to zero for large $n$, where $c_3$ is a constant depends on $r$ and $R$, where $\Gamma$ is defined in \cref{lemma20}.
\end{lemma}
\begin{proof}
By the Definition~\ref{definition4} of $\Pi_{P}$, 
\beq 
\label{inq:t1}
\xi^{\top} \Pi_{P}\delta(X) 
&= \xi^{\top} (p_1 + \dots + p_d) 
\\&\le \abs{\sum_{l=1}^r \sum_{j=1}^d\xi^{\top}g_{j\ell} {\delta_j(X_j)}^{\top}g_{j\ell}} \le \sum_{l=1}^r \sum_{j=1}^d \abs{{\delta_j(X_j)}^{\top}g_{j\ell}}\abs{ \xi^{\top}g_{j\ell}}\\
&\le d \bracket{ \paren{\max_{l = 1, \dots, r, j = 1, \dots, d } \abs{\xi^{\top}g_{jl}}} \paren{\max_{j = 1, \dots, d}\sum_{l=1}^r  \abs{{\delta_j(X_j)}^{\top}g_{j\ell}}}}\\
    &\le c_3d^{3/2}  \paren{\max_{l = 1, \dots, r, j = 1, \dots, d} \abs{\xi^{\top}g_{jl}}} \paren{\frac{1}{\lambda_{\min}(\frac{1}{n}\Gamma^{\top}\Gamma)^{1/2}}
\paren{\frac{t}{n^{1/2}} + \frac{t^2 }{n^{1/2}} + V\log n}},
\beq where the first inequality is based on  \eqref{representation of polynomial} and last inequality is based on \eqref{eq1295}.

Thus, we have 
\beq 
{}&\ept{\sup_{\delta \in \gF^{(r)}(2V)\cap \gD(t^2)} \xi^{\top} \Pi_{P}\delta(X)|\set{X^i}_{i=1}^n} \\&\le \frac{c_3d^{3/2}}{\lambda_{\min}(\frac{1}{n}\Gamma^{\top}\Gamma)^{1/2}}
\paren{\frac{t}{n^{1/2}} + \frac{t^2 }{n^{1/2}} + V\log n} \paren{ \sum_{l = 1, \dots, r} \E\bracket{\max_{j = 1, \dots, d} \abs{\xi^{\top}g_{jl}}|\set{X^i}_{i=1}^n}}  \\&\le \frac{c_3d^{3/2}\sqrt{\log d}}{\lambda_{\min}(\frac{1}{n}\Gamma^{\top}\Gamma)^{1/2}}\paren{\frac{t}{n^{1/2}} + \frac{t^2 }{n^{1/2}} + V\log n}.
\beq 

The first inequality follows from line \eqref{inq:t1} and 
\bea 
\max_{l = 1, \dots, r, j = 1, \dots, d} \abs{\xi^{\top}g_{jl}} \le \sum_{l = 1, \dots, r} \max_{j = 1, \dots, d} \abs{\xi^{\top}g_{jl}}. \bea 
The last inequality holds since $\xi^{\top}g_{jl}$ are sub-Gaussian random variables with parameter $1$, and then by applying (2.66) from \cite{wainwright2019high}. 
Thus we have 
\beq 
\ept{\sup_{\delta \in \gF^{(r)}(2V)\cap \gD(t^2)}   \xi^{\top} \Pi_{P}\delta(X)|\set{X^i}_{i=1}^n} \le  \frac{c_3d^{3/2}\sqrt{\log d}}{\lambda_{\min}(\frac{1}{n}\Gamma^{\top}\Gamma)^{1/2}}\paren{\frac{t}{n^{1/2}} + \frac{t^2 }{n^{1/2}} + V\log n}.
\beq

\end{proof}

\begin{lemma}
\label{lemma22}
Let $t>0$  and  for $\delta \in \gF^{(r)}(V)$ with $\Delta^2(\delta) \le t^2$ and $\delta_j \in B_{TV^{(r)}}(V)$.  
For $q$ defined in \cref{definition4} \textbf{6} where $\Pi_{Q}\delta(X) = q$, define $\Delta_q^2 \coloneqq \sum_{i=1}^n \min\set{\abs{q^i}, (q^i)^2}$, where $q^i$ is defined in \cref{definition4} \textbf{6}. 
For $j=1, \dots, d$,  let $g_{j1}, \dots, g_{jr}$ be the orthonormal basis for $P_j$, with the definitions and notations in Definition~\ref{definition4}. Then, it holds that  
\beq 
\Delta_q^2 \le h(t,d, n) \coloneqq  \paren{2t^2 + 2t^2\gamma(t,d,n) + 4n c^2(r,\lambda_{\text{min}})d^3\paren{\frac{t^2 \log^2 n}{n^2} + \frac{t^4 \log^2 n}{n^2} + \frac{V^2\log^2 n}{n}} },
\beq 
with $\gamma(t,d,n) \coloneqq  \frac{c_3d^{3/2}}{\lambda_{\min}(\frac{1}{n}\Gamma^{\top}\Gamma)^{1/2}}
\paren{\frac{t \log n}{n} + \frac{t^2 \log n}{n} + \frac{V\log n}{n^{1/2}}}$ as the same quantity in Lemma~\ref{lemma21}. 
\end{lemma}
\begin{proof}
Based on Lemma~\ref{lemma21},  we have 
\beq 
\norm{p}_{\infty} = \sum_{j=1}^d\norm{ p_j}_{\infty} \le  \gamma(t,d,n) \coloneqq \frac{c_3d^{3/2}}{\lambda_{\min}(\frac{1}{n}\Gamma^{\top}\Gamma)^{1/2}}
\paren{\frac{t \log n}{n} + \frac{t^2 \log n}{n} + \frac{V\log n}{n^{1/2}}}. 
\beq 

  Based on triangle inequality, we have  
  \beq 
  \label{ref:triangleqp}
  \norm{q - \delta}_{\infty} = \norm{p}_{\infty} \le \gamma(t,d,n).
  \beq 
  
Also we find that 
\beq 
 \Delta_q^2 & = \sum_{i=1}^n \min\set{\abs{q^i}, (q^i)^2} \\ & \le  \sum_{i=1}^n \abs{q_{i}}\ind{\abs{\delta(X^i)} >1} + \sum_{i=1}^n q_{i}^2 \ind{\abs{\delta(X^i)} \le 1}. 
\beq 
Then we have 
\beq 
\Delta_q^2 &\le \sum_{i=1}^n (\abs{\delta(X^i)} + \gamma(t,d,n))\ind{\abs{\delta(X^i)} >1} + \sum_{i=1}^n (2{\delta(X^i)}^2 + 2(\gamma(t,d,n)^2)) \ind{\abs{\delta(X^i)} \le 1} \\
&\le t^2(\gamma(t,d,n)) + \sum_{i=1}^n (\abs{\delta(X^i)})\ind{\abs{\delta(X^i)} >1} + 2\sum_{i=1}^n \delta_{i}^2\ind{\abs{\delta(X^i)} \le 1} + 2\sum_{i=1}^n (\gamma(t,d,n)^2)\ind{\abs{\delta(X^i)} \le 1} \\
& \le 2t^2 + 2t^2\gamma(t,d,n) + 2n\gamma(t,d,n)^2\\
& \le \paren{2t^2 + 2t^2\gamma(t,d,n) + 6n c^2(r,\lambda_{\text{min}})d^3\paren{\frac{t^2 \log^2 n}{n^2} + \frac{t^4 \log^2 n}{n^2} + \frac{V^2\log^2 n}{n}} }.  
\beq 
Where the first inequality follows since $\norm{q - \delta}_{\infty} \le \gamma(t,d,n)$ and $a^2 + b^2 \ge 2ab$, 
where the second inequality follows from the fact that 
\beq 
\abs{\set{i \in \set{1,\dots, n}: \abs{\delta(X^i)} > 1}} \le t^2,
\beq  which holds because $\Delta^2(\delta) \le t^2$. The third inequality follows by the definition of 

$\Delta^2(\delta)\coloneqq \sum_{i=1}^n \min \set{\abs{\delta(X^i)}, \delta(X^i)^2} = \sum_{i=1}^n (\abs{\delta(X^i)}\ind{\abs{\delta(X^i)} >1}) + \sum_{i=1}^n (\delta(X^i)^2\ind{\abs{\delta(X^i)} \le 1})$. 
\end{proof}

\begin{lemma}[Bounding $T_2$] Let $t \asymp n^{-r/(2r+1)} V^{1/(2r+1)}\max\set{1, V^{(2r-1)/(4r+2)}} $, for $\delta \in \gF^{(r)}(V)$ with $\Delta^2(\delta) \le t^2$, and with the definitions and notations in Definition~\ref{definition4}. For a positive constant $c_4$ , it holds that 
\label{lemma:boudingT2}
    \beq 
\ept{\sup_{\delta \in \gF^{(r)}(2V)\cap \gD(t^2)}   \frac{1}{n}\xi^{\top} \Pi_{Q}\delta(X)|\set{X^i}_{i=1}^n} &\le c_4(d)^{1/2+1/2r}n^{-1/2} V^{1/2r}m^{1-1/(2r)},  
\beq with $m$ is a quantity depends on $t,n,d,r, \lambda_{\text{min}}$.
\end{lemma}

\begin{proof}
For $q$ defined in \cref{definition4} \textbf{6} where $\Pi_{Q}\delta(X) = q$, define $\Delta_q^2 \coloneqq \sum_{i=1}^n \min\set{\abs{q^i}, (q^i)^2}$, where $q^i$ is defined in \cref{definition4} \textbf{6}. First, by Lemma~\ref{lemma17},  we have 
\beq 
\label{eq152}
\norm{q}_n
&\le    \max\set{\norm{q}_{\infty}^{1/2},1}\Delta_qn^{-1/2} \\
&\le \max\set{d^{1/2}\max_{j=1\dots d}\norm{q_j}_{\infty}^{1/2},1}\Delta_qn^{-1/2}\\
&\le c(r, \lambda_{\text{min}},d) \max\set {V^{1/2},1}\left(t+d^{3/4}\paren{\frac{ V^{1/2}\sqrt{\log n} }{ n^{1/4}} + \frac{t^{1/2} \sqrt{\log n}}{n^{1/2}} + 
\frac{t \log n}{ n^{1/2}} }t \right. 
\\
&\left. \quad\quad\quad + d^{3/2}(\frac{t \log n}{n^{1/2}}+\frac{t^2 \log n}{n^{1/2}} + V\log n) \right) n^{-1/2}  \\
&\le c(r, \lambda_{\text{min}},d) \max\set {V^{1/2},1}\paren{\paren{1 + \frac{t^{1/2} \sqrt{\log n}}{n^{1/2}} + 
\frac{t \log n}{ n^{1/2}} }t +  \frac{t^2 \log n}{n^{1/2}} + V\log n} n^{-1/2}
\\& =: m,  
\beq where the second line we use triangle inequality, where in the third line we used~\cref{lemma19} to control $\norm{q_j}_\infty$,  ~\cref{lemma22} to control $\Delta_q$, and $\sqrt{a_1^2 + \dots + a_n^2} \le a_1 + \dots + a_n$ for positive numbers $a_1, \dots, a_n$, and $c(r, \lambda_{\text{min}},d)$ is introduced that depends on $d,r, \lambda_{\text{min}}$, where in the fourth line we assume $t \geq 1$ and $t \asymp n^{-r/(2r+1)} V^{1/(2r+1)}\max\set{1, V^{(2r-1)/(4r+2)}} $ and combine some lower order terms.

Then we have  

\beq 
\label{eq1172}
&\ept{\sup_{\delta \in \gF^{(r)}(2V)\cap \gD(t^2)}   \frac{1}{n}\xi^{\top} \Pi_{Q}\delta(X)|\set{X^i}_{i=1}^n} 
\\ 
& \leq \ept{\sup_{g \in \gF^{(r)}(2V)\cap B_n(m)   }   \frac{1}{n}\sum_{i=1}^n \xi^i g(X^i) |\set{X^i}_{i=1}^n} \\
&\le c_{\text{Dud}}\frac{1}{\sqrt{n}}\int_{0}^{m} \sqrt{\log N(\epsilon, \norm{\cdot}_{n}, \gF^{(r)}(2V))}d\epsilon\\
&\le c_4n^{-1/2}(d)^{1/2+1/2r} V^{1/2r}\int_{0}^{m}\epsilon^{-1/2r}d\epsilon\\
&=  c_4n^{-1/2}(d)^{1/2+1/2r} V^{1/2r}m^{1-1/(2r)}. 
\beq

The second line follows from \eqref{eq152}, the third line applies Dudley's entropy integral \cite{dudley1967sizes}, with $c_{\text{Dud}}$ a positive constant, and the fourth line follows from Lemma 15 in \cite{sadhanala2019additive}, specifically from the middle derivations and the third-to-last displayed equation on page 45, where $c_4$ is a positive constant.
    
\end{proof}

\section{PROOF OF THEOREM~\ref{theorem1}}
\begin{proof}

For $t>1$,
by Theorem~\ref{theorem7}, 

\beq 
\sP\paren{\frac{1}{n}\Delta^2(\widehat f - f_0) > t^2\mid \set{X^i}_{i=1}^n} & \le \frac{c}{t^2}\gR_n\paren{\set{f-f_0: f \in \gF^{(r)}(V)} \cap \set{f-f_0: \Delta^2( f - f_0)\le nt^2}} \\ 
& = \frac{c}{t^2}\ept{ 
\sup_{\delta \in \gF^{(r)}(V) - f_0: \Delta^2(\delta)
\le nt^2} \frac{1}{n}\xi^{\top}\delta(X)  |\set{X^i}_{i=1}^n},
\beq 
where $c$ is a positive constant that depends on $L$ and $\ubar{f}$ in Assumption~\ref{assumptionA}. 
We denote the numerator of the right-hand side as $T_n(t)$ that depends on $t$ and $n$.
By \eqref{decomposition_t1_t2} in Lemma~\ref{lemma11}, we decompose the $T_n(t)$ into two terms. Equation \eqref{T1_bound} in Lemma~\ref{proposition2} gives the bound for the first term, and Equation~\eqref{eq1172} in Lemma~\ref{lemma:boudingT2} gives the bound for the second term. Then we have the upper bound  of $T_n(t)$ as  
\beq 
\label{eq1149}
T_n(t) & \le  cc_4n^{-1/2}(d)^{1/2+1/2r} V^{1/2r}m_1(t)^{1-1/(2r)} \\
& \le C_1n^{-r/(2r+1)} V^{1/(2r+1)} m_1(t) + C_1n^{-2r/(2r+1)} V^{2/(2r+1)}, 
\beq 
where in the first inequality, we used \eqref{eq1172}, which is of higher order compared to \eqref{T1_bound}. The second inequality follows from \cref{Lemma_11}, where we set \( a = n^{-1/2} d^{1/2+1/2r} V^{1/2r} \), \( b = m_1(t) \), and \( w = 1/r \). Here, \( C_1 \) is a constant that depends on \( d \), $c$ and $c_1$, and \( m_1(t) \) depends on \( t, n, d, r, \lambda_{\text{min}} \), as derived in \eqref{eq152}. Specifically, 
\begin{equation}
\begin{aligned}
&m_1(t) = c(r, \lambda_{\text{min}},d) \max\left\{ V^{1/2}, 1 \right\} \left( \left( 1 + t^{1/2} \sqrt{\log n} + t \log n \right)t + t^2 \log^2 n + \log n V n^{-1/2} \right), 
\end{aligned}
\end{equation} 
where \( c(r, \lambda_{\text{min}}, d) \) depends on \( r, \lambda_{\text{min}}, d \).


Define the function
\beq 
g_n(t) = \frac{T_n(t)}{t^2},
\beq 
setting 
\beq 
t \asymp n^{-r/(2r+1)} V^{1/(2r+1)}\max\set{1, V^{(2r-1)/(4r+2)}} , 
\beq 
we obtain the result of Theorem~\ref{theorem1}, by verifying that
\beq
\label{limit_derivation}
\lim_{c_1\rightarrow \infty}\sup_{n\ge 1}g_n(c_1 t) &= 
\lim_{c_1\rightarrow \infty}\sup_{n\ge 1} \frac{T_n(c_1t)}{c_1^2t^2
} \\ & \leq  \lim_{c_1\rightarrow \infty}\sup_{n\ge 1} \frac{C_1n^{-r/(2r+1)} V^{1/(2r+1)} m_1(c_1t)}{c_1^2t^2} \\&\quad + \lim_{c_1\rightarrow \infty}\sup_{n\ge 1} \frac{C_1n^{-2r/(2r+1)} V^{2/(2r+1)}}{c_1^2t^2}
\\
&\le \lim_{c_1\rightarrow \infty}\sup_{n\ge 1}  c(r,\lambda_{\text{min}},d)\Bigl(c_1^{-1} +t^{1/2}c_1^{-1}(\log n)^{1/2}  +  tc_1^{-1} \log n \Bigr. \\
&\quad \Bigl.+ tc_1^{-1}(\log n)^2+ c_1^{-1}Vn^{-1/2}t^{-1}\Bigr)\\
&  =0, 
\beq with such a choice of $t$. 
\end{proof}
\begin{lemma}[Lemma 11 in \cite{sadhanala2019additive}]
\label{Lemma_11}    

For any \( a, b \geq 0 \), and any \( w \),
\[
ab^{1 - w/2} \leq a^{1/(1+w/2)} b + a^{2/(1+w/2)}.
\]

\end{lemma}
\begin{proof}
Note that either \( ab^{1 - w/2} \leq a^{1/(1+w/2)} b \) or \( ab^{1 - w/2} \geq a^{1/(1+w/2)} b \), and in the latter case we get \( b \leq a^{1/(1+w/2)} \), so \( ab^{1 - w/2} \leq a^{2/(1+w/2)} \). 
\end{proof}

\section{DISCUSSION OF INFLUENCE OF $\tau$ FOR THEOREM~\ref{theorem1} }
\label{app_discussion_tau}
Note that the influence of $\tau$ on the $O_{pr} \paren{n^{-2r/(2r+1)} V^{2/(2r+1)}\max\set{1, V^{(2r-1)/(2r+1)}}}$ rate is solely through a constant $\tilde c$ hiden in the symbbol $O_{pr}$. The constant $\tilde c$ satisfies $\tilde c = O((L\ubar{f})^{-1})$, where $L$ and $\ubar{f}$ are defined as in Assumption~\ref{assumptionA}, with $F_{Y^i|X^i}(f^i_0) = \tau$. Please also refer to Lemma~\ref{lemma13}, where $c$ is given by $\tilde c^{-1}$. Hence, in principle, we could let $\tau \rightarrow 0$ or $\tau \rightarrow 1$, but in that case, the rate in \eqref{rate_fix_d} would have to be inflated by the factor of $(L\ubar{f})^{-1}$. 

\section{ASSUMPTION FOR THEOREM~\ref{theorem2}}
\label{app_discussion_assumption6}

Furthermore, to produce an error rate linearly dependent on the growing dimension $d$, we include the assumption below. This appeared as Assumption A3 in \cite{sadhanala2019additive}. 
\begin{assumption}
\label{assumption6}
The input points $X^i, i = 1, \dots, n$ are i.i.d. from a continuous distribution $\mathscr{F}$ supported on $[0, 1]^d$, that decomposes as $\mathscr{F} = \mathscr{F}_1 \times \cdots \times \mathscr{F}_d$, where the
density of each $\mathscr{F}_j$ is lower and upper bounded by positive constants $b_1$ and $b_2$, respectively,  $\text{ for } j = 1,\dots, d$. 
\end{assumption}


\begin{remark}
Assumption \ref{assumption6} appeared as  Assumption A.3 in  \cite{sadhanala2019additive}. As mentioned in \cite{sadhanala2019additive}, this condition is fairly restrictive, since it requires the input distribution \( \mathscr{F} \) to have independent coordinates. The reason we use this assumption: when \( \mathscr{F} = \mathscr{F}_1 \times \cdots \times \mathscr{F}_d \), additive functions enjoy a key decomposability property in terms of the (squared) \( L_2 \) norm defined with respect to \( \mathscr{F} \). In particular, if \( f = \sum_{j=1}^d f_j \) has components with \( L_2 \) mean zero, denoted by
\[ 
\bar{f}_j = \int_0^1 f_j(x_j) d\mathscr{F}_j(x_j) = 0, \quad j = 1, \ldots, d,
\]
then we have
\beq 
\label{norm_decomp}
\left\| \sum_{j=1}^d f_j \right\|_{L_2}^2 = \sum_{j=1}^d \| f_j \|_{L_2}^2.
\beq 

This is explained by the fact that each pair of components \( f_j, f_l \) with \( j \ne l \) are orthogonal with respect to the \( L_2 \) inner product, since
\[ 
\langle f_j, f_l \rangle_{L_2} = \int_{[0,1]^2} f_j(x_j) f_l(x_l) d\mathscr{F}_j(x_j) d\mathscr{F}_l(x_l) = \bar{f}_j \bar{f}_l = 0.
\]

The above orthogonality, and thus the decomposability property in \eqref{norm_decomp}, is only true because of the product form \( \mathscr{F} = \mathscr{F}_1 \times \cdots \times \mathscr{F}_d \). Such decomposability is not generally possible with the empirical norm (the inner products between components do not vanish even if all empirical means are zero). In the proof of Theorem~\ref{theorem2}, we move from considering the empirical norm of the error vector to the \( L_2 \) norm, in order to leverage the property in~\eqref{norm_decomp}, which eventually leads to an error rate that has a polynomial dependence on the dimension \( d \). 
\end{remark}

\section{SUPPLEMENTARY LEMMAS FOR THEOREM~\ref{theorem2}}
\label{app_lemmas_theorem2}
\vspace{2pt}
\begin{lemma} Let $f: [0,1]^d \rightarrow \R$ with $f = \sum_{j=1}^d f_j$. Let Assumption~\ref{assumption6} holds, furthermore, 
if 
\beq 
\int_{0}^1 f_j(x_j) d\mathscr{F}_j(x_j) = 0, 
\beq for $j=1,\dots, d$, then 
it holds that 
\beq 
\innerprod{f}{f}_{L_2} = \sum_{j=1}^d\innerprod{f_j}{f_j}_{L_2}. 
\beq     
\end{lemma}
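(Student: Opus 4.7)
The plan is to expand the squared $L_2$-norm of $f$, use the product structure of $F$ guaranteed by Assumption~\ref{assumption6} to factor the cross terms, and then invoke the marginal mean-zero hypothesis to kill them.

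First I would write
\beq
\innerprod{f}{f}_{L_2} = \int_{[0,1]^d} \paren{\sum_{j=1}^d f_j(x_j)}^2 dF(x) = \sum_{j=1}^d \int_{[0,1]^d} f_j(x_j)^2 dF(x) + \sum_{j \neq k} \int_{[0,1]^d} f_j(x_j) f_k(x_k) dF(x).
\beq
For the diagonal terms, since $F = F_1 \times \cdots \times F_d$ and $f_j(x_j)^2$ depends only on the $j$th coordinate, Fubini's theorem gives
\beq
\int_{[0,1]^d} f_j(x_j)^2 dF(x) = \int_0^1 f_j(x_j)^2 dF_j(x_j) = \innerprod{f_j}{f_j}_{L_2}.
\beq

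Next I would handle the cross terms. For $j \neq k$, again by the product structure of $F$ and Fubini,
\beq
\int_{[0,1]^d} f_j(x_j) f_k(x_k) dF(x) = \paren{\int_0^1 f_j(x_j) dF_j(x_j)} \paren{\int_0^1 f_k(x_k) dF_k(x_k)} = 0 \cdot 0 = 0,
\beq
where the final equality uses the hypothesis $\int_0^1 f_j(x_j) dF_j(x_j) = 0$ for every $j$. Combining the diagonal and off-diagonal computations yields $\innerprod{f}{f}_{L_2} = \sum_{j=1}^d \innerprod{f_j}{f_j}_{L_2}$, which is exactly the claim.

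There is no real obstacle here; the only thing to keep straight is applying Fubini's theorem legitimately, which is automatic once one knows (or assumes implicitly by writing $\innerprod{f}{f}_{L_2}$) that each $f_j \in L_2(F_j)$, since then $f_j(x_j) f_k(x_k) \in L_1(F)$ for $j \neq k$ by the product structure. This is the only place Assumption~\ref{assumption6} is essential, and it is precisely the product form $F = F_1 \times \cdots \times F_d$ (the boundedness of the marginal densities is not used in this lemma, though it matters elsewhere in the paper).
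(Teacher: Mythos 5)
Your proof is correct and follows essentially the same route as the paper's: expand the square, use the product form $F = F_1 \times \cdots \times F_d$ to factor each cross term into a product of marginal integrals, and invoke the mean-zero hypothesis to annihilate them. Your added remark on the integrability needed to justify Fubini is a small but welcome refinement over the paper's version.
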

\begin{proof}
    By the definition of $L_2$ inner product, and Assumption~\ref{assumption6},we have 
    \beq 
    \innerprod{f}{f}_{L_2} &= \int_{[0,1]^d} f(x)^2 d\mathscr{F}(x) \\
   &= \int_{[0,1]^d} \paren{\sum_{j=1}^df_j(x_j)}^2 d\mathscr{F}_1(X^1)\times \dots \times d\mathscr{F}_d(x_j)\\
    &= \sum_{j=1}^d\int_{[0,1]} f_j(x_j)^2 d\mathscr{F}_j(x_j) + 2\sum_{j\neq k} \int_{[0,1]^2} f_j(x_j)f_k(x_k)d\mathscr{F}_j(x_j)d\mathscr{F}_k(x_k) \\ 
    &= \sum_{j=1}^d\int_{[0,1]} f_j(x_j)^2 d\mathscr{F}_j(x_j) + 2\sum_{j\neq k} \int_{[0,1]} f_j(x_j)d\mathscr{F}_j(x_j) \int_{[0,1]} f_k(x_k)d\mathscr{F}_k(x_k) \\
    &= \sum_{j=1}^d\innerprod{f_j}{f_j}_{L_2}. 
    \beq 
    Thus we conclude that 
    $\innerprod{f_j}{f_k}_2 = 0$ for $j\neq k$ and 
    \beq 
    \norm{\sum_{j=1}^d f_j}_{L_2}^2 = \sum_{j=1}^d \norm{f_j}_{L_2}^2. 
    \beq 
\end{proof}

\begin{lemma}[Bounding the $T_1$ with growing $d$]
\label{proposition2growingd} 
For $\delta \in \gF^{(r)}(V)$, and let $\Pi_{P}$ follow the definitions and notations in Definition~\ref{definition4}.  it holds that \beq 
\ept{\sup_{\delta \in \gF^{(r)}(2V)\cap \gD(t^2)}   \xi^{\top} \Pi_{P}\delta(X)|\set{X^i}_{i=1}^n} \le   \frac{c_3d^{3/2}\sqrt{\log d}}{\lambda_{\min}(\frac{1}{n}\Gamma^{\top}\Gamma)^{1/2}}\paren{\frac{t}{n^{1/2}} + \frac{t^2 }{n^{1/2}} + V\log n}, 
\beq which goes to zero for large $n$.
\end{lemma}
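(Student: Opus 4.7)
The statement is identical to Lemma~\ref{proposition2}, so the plan is to repeat the same argument; the only subtlety is verifying that no step secretly used $d$ fixed. First I would decompose $\Pi_P(\delta)$ through the orthonormal basis constructed in Definition~\ref{definition4} and Lemma~\ref{lemma20}: writing $\alpha_{jl}=\innerprod{\delta_j}{q_j^l}_2$, I get $\Pi_P(\delta)=\sum_{j=1}^d\sum_{l=1}^r \alpha_{jl} q_j^l$, so that
\[
\xi^\top \Pi_P(\delta) = \sum_{l=1}^r \sum_{j=1}^d (\xi^\top q_j^l)\,\alpha_{jl}
\le \sum_{l=1}^r \Bigl(\max_{j=1,\ldots,d} |\xi^\top q_j^l|\Bigr)\sum_{j=1}^d |\alpha_{jl}|.
\]
The sum $\sum_{j,l}|\alpha_{jl}|$ is a deterministic quantity, bounded uniformly over $\delta\in \gF^{(r)}(2V)\cap \gD(t^2)$ by $d$ copies of the bound \eqref{eq1295} already established in Lemma~\ref{lemma21}, which gives a term of order $\frac{c(r)d^{3/2}}{\lambda_{\min}(\frac{1}{n}\mathbbmss{U}^\top\mathbbmss{U})^{1/2}}\paren{\frac{t}{n^{1/2}}+\frac{t^2}{n^{1/2}}+c(r,R)V}$.

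Next I would handle the random part. Since the $q_j^l$ are $\|\cdot\|_2$-unit vectors and the $\xi_i$ are independent Rademacher, each $\xi^\top q_j^l$ is $1$-sub-Gaussian. Applying the standard sub-Gaussian maximal inequality (e.g., (2.66) in \citet{wainwright2019high}) produces
\[
\E\!\left[\max_{j=1,\ldots,d}|\xi^\top q_j^l|\,\Bigm|\, \{x_i\}_{i=1}^n\right] \lesssim \sqrt{\log d}.
\]
Combining the two bounds and summing over the $r$ values of $l$ yields the claimed inequality with the extra $\sqrt{\log d}$ factor arising exactly from this maximal inequality; the $r$-dependence is absorbed into $c(r)$.

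The main thing to double-check is that the sub-Gaussian maximal inequality and Lemma~\ref{lemma20}'s bound on $\|\alpha\|_2$ hold \emph{as inequalities} regardless of whether $d$ is fixed or grows with $n$; both clearly do. The comment ``goes to zero for large $n$'' no longer follows automatically because $d$ grows, but that comment is not part of the quantitative conclusion and plays no role in the downstream use of the lemma inside the proof of Theorem~\ref{theorem2}: there the bound is plugged in as written and combined with the $T_2$ bound of Lemma~\ref{lemma23growingd} to yield the $d^{(2r+2)/(2r+1)}$ rate. The only real obstacle is bookkeeping of the $d$-dependent constants across Lemmas~\ref{lemma20}--\ref{lemma21}, ensuring that the $d^{3/2}$ prefactor in front is the correct combined exponent coming from one factor of $d$ (the outer sum over $j$) times $\sqrt{d}$ (from the $\|\alpha\|_2$ bound), which indeed matches $d^{3/2}$.
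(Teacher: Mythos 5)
Your proposal is correct and follows essentially the same route as the paper: the paper's own proof of Lemma~\ref{proposition2growingd} simply states that the argument of Lemma~\ref{proposition2} carries over, and that argument is exactly the decomposition through the orthonormal basis of $P_j$, the bound \eqref{eq1295} on the coefficients via Lemma~\ref{lemma20}, and the sub-Gaussian maximal inequality giving the $\sqrt{\log d}$ factor, with the same $d \cdot \sqrt{d} = d^{3/2}$ bookkeeping. Your observation that the parenthetical ``goes to zero for large $n$'' is no longer automatic when $d$ grows is a fair caveat, but as you note it does not affect how the quantitative bound is used in Theorem~\ref{theorem2}.
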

\begin{proof}
    The proof is similar to the proof of Lemma~\ref{proposition2}, thus we omit that. 
\end{proof}

\begin{lemma}
\label{lemma30}
Let $t_n = c\sqrt{d} n^{-r/(2r+1)} V^{1/(2r+1)}$, where $c$ is a positive constant. This value is chosen such that, if we set $m = t_n$, then
\beq \frac{\gR_n(B_{TV^{(r)}}^d(2V) \cap B_{L_2}(m))}{m} \le \frac{m}{c} \beq
holds with high probability.
\end{lemma}
\begin{proof}
Let $\beta$ be a $d$ dimensional new variable satisfying $\norm{\beta}_2 \le m$. We then use it to control the radius of the $L_2$ ball.
Let first use the decomposability property in $L_2$ norm by Assumption~\ref{assumption6}, which is 
\beq
\norm{\sum_{j=1}^d f_j}_{L_2}^2 = \sum_{j=1}^d \norm{f_j}_{L_2}^2.  
\beq 
We let $m$ to denote the upper bound such that
\beq 
\norm{\sum_{j=1}^d f_j}_{L_2}^2 \le m^2 \implies \sum_{j=1}^d \norm{f_j}_{L_2}^2 \le m^2.
\beq

Then we have two equivalent function spaces:
\beq 
\label{eq_func_decomp}
\set{f_j: \sum_{j=1}^d \norm{f_j}_{L_2}^2 \le m^2} = \set{f_j: \norm{f_j}_{L_2} \le \abs{\beta_j}, \norm{\beta}_2 \le m}.
\beq   
To get the local critical radius of $B_{TV^{(r)}}^d(2V)$, by $L_2$ orthogonality of the components of functions in $B_{TV^{(r)}}^d(2V)$, we first have  
\beq 
\label{eq1729}
\sup_{g \in B_{TV^{(r)}}^d(2V)\cap B_{L_2}(m)}   \frac{1}{n}\sum_{i=1}^n \xi^i g(X^i)& \le  \sup_{\norm{\beta}_2\le m } \sup_{\substack{g_j \in B_{TV^{(r)}}(2V)\cap B_{L_2}(\abs{\beta_j})\\ j = 1, \dots, d}}  \frac{1}{n}\sum_{i=1}^n \xi^i \sum_{j=1}^d g_j (X^i_j) \\&\le 
\sup_{\norm{\beta}_2\le m} \sum_{j=1}^d \sup_{g_j \in B_{TV^{(r)}}(2V)\cap B_{L_2}(\abs{\beta_j})}  \frac{1}{n}\sum_{i=1}^n \xi^i  g_j (X^i_j).
\beq 

The first inequality follows from \eqref{eq_func_decomp}.


Then we have with probability at least $1-1/n^2$, it holds that 
\begin{equation}
\label{eq_159_hold}
\begin{aligned}
&\sup_{g_j \in B_{TV^{(r)}}(2V)\cap B_{L_2}(\abs{\beta_j})}  \frac{1}{n}\sum_{i=1}^n \xi^i g_j (X^i_j)  \\
&\le 
c \gR_n(B_{TV^{(r)}}(2V)\cap B_{L_2}(\abs{\beta_j})) + c\sqrt{\frac{\log n}{n}}\paren{\sup_{g_j \in B_{TV^{(r)}}(2V)\cap B_{L_2}(\abs{\beta_j})} \norm{g_j}_n}\\
&\le c \paren{\gR(B_{TV^{(r)}}(2V)\cap B_{L_2}(\abs{\beta_j})) + \frac{\log n}{n}+\sqrt{\frac{\log n}{n}}\paren{\sup_{g_j \in B_{TV^{(r)}}(2V)\cap B_{L_2}(\abs{\beta_j})} \norm{g_j}_n}}\\
&\le c \paren{\gR(B_{TV^{(r)}}(2V)\cap B_{L_2}(\abs{\beta_j})) + \frac{\log n}{n}+\sqrt{\frac{\log n}{n}}\sqrt{2} \paren{\max\set{\abs{\beta_j}, r_{nj}}}}\\
&\le c\frac{V^{1/2r}\abs{\beta_j}^{1-2/r}}{\sqrt{n}} + c\sqrt{\frac{\log n}{n}}\paren{\max\set{\abs{\beta_j}, r_{nj}}}.
\end{aligned}
\end{equation}

The first line holds by Theorem 3.6 in \cite{wainwright2019high}, where $\gR_n$ is defined in \eqref{definition213}, 
the second line holds by Lemma A.4 in \cite{bartlett2005local}, and third inequality holds by Lemma 3.6 in \cite{bartlett2005local}, where $r_{nj}$ is the critical radius of the function class $B_{TV^{(r)}}(2V)$, the smallest $b$ such that 
\beq 
\frac{\gR(B_{TV^{(r)}}(2V)\cap B_{L_2}(b))}{b} \le \frac{b}{c}. 
\beq 
By Lemma~\ref{lemma29}, we have $r_{nj} = n^{-r/(2r+1)} V^{1/(2r+1)}$, 
and the last inequality in \eqref{eq_159_hold} follows by Lemma~\ref{lemma29}, also uses $\frac{\log n}{n } \le r_{nj}\sqrt{\frac{\log n}{n }}$ for $n$ sufficiently large.  Call an event based on the result of \eqref{eq_159_hold} that simultaneously holds for all $j=1,\dots, d$
\beq 
\label{event_e}
\gE = \set{\sup_{g_j \in B_{TV^{(r)}}(2V)\cap B_{L_2}(\abs{\beta_j})}  \frac{1}{n}\sum_{i=1}^n \xi^i g_j (X^i_j) \leq c\frac{V^{1/2r}\abs{\beta_j}^{1-2/r}}{\sqrt{n}} + c\sqrt{\frac{\log n}{n}}\paren{\max\set{\abs{\beta_j}, r_{nj}}} \text{ for }  j = 1, \dots, d}.  
\beq 
By a union bound, $\sP(\gE) \geq 1-d/n^2$.  

Meanwhile, on $\gE^c$, by Lemma~\ref{lemma19},  
we have $\norm{g_j (X^i_j)}_{\infty} \le V \log n$.

Thus, 
back to \eqref{eq1729}, we have 
\beq 
\sup_{g \in B_{TV^{(r)}}(2V)\cap B_{L_2}(m)}   \frac{1}{n}\sum_{i=1}^n \xi^i g(X^i)&\le  \sup_{\norm{\beta}_2 \le m}\sum_{j=1}^d \paren{c\frac{V^{1/2r}\abs{\beta_j}^{1-1/2r}}{\sqrt{n}} + \sqrt{\frac{\log n}{n}}\paren{\max{\abs{\beta_j}, r_{nj}}}}+ \frac{dV \log n}{n^2}\\
&\le c\paren{\frac{V^{1/2r}d^{(2r+1)/(4r)}m^{1-1/2r}}{\sqrt{n}} + \sqrt{\frac{d\log n}{n}}m +  dr_{nj}^2} + \frac{dV \log n}{n^2}.
\beq

In the second line, we use Holder's inequality $a^{\top}b \leq \norm{a}_p \norm{b}_q$ for the first term, with $p = 4/(2 + 1/r)$, 
and $q = 4/(2 - 1/r)$; for the second term, we use $\max\set{a, b} \le a + b$ for  $a>0$ and $b>0$. We also use the fact in \eqref{eq_func_decomp}  to get the bound $\norm{\beta}_1 \le 
\sqrt{d}m$, and the
fact that $r_{nj} \sqrt{\log n /n}
\le r_{nj}^2$ for large enough $n$.
Thus, we have 
\beq 
\label{rademacher_162}
\gR_n(B_{TV^{(r)}}(2V)\cap B_{L_2}(m)) &\le \ept{\sup_{g \in B_{TV^{(r)}}(2V)\cap B_{L_2}(m)}   \frac{1}{n}\sum_{i=1}^n \xi^i g(X^i)|\set{X^i}_{i=1}^n}\\
&\le c\paren{\frac{V^{1/2r}d^{(2r+1)/(4r)}m^{1-1/2r}}{\sqrt{n}} + \sqrt{\frac{d\log n}{n}}m +  dr_{nj}^2} + \frac{dV \log n}{n^2}\\ 
& \lesssim c\paren{\frac{V^{1/2r}d^{(2r+1)/(4r)}m^{1-1/2r}}{\sqrt{n}} + \sqrt{\frac{d\log n}{n}}m +  dr_{nj}^2},
\beq 
the third line follows by seeing the $\frac{dV \log n}{n^2}$ is a lower order term given $V = \Omega(1)$. 
Denoting the right-hand side of first line of \eqref{rademacher_162} as $T_r$, we  have 


\beq
\label{eq1772}
T_r & \le  c \paren{n^{-1/2}(d)^{1/2+1/4r} V^{1/2r}m^{1-1/(2r)} + \sqrt{\frac{d\log n}{n}}m +  dr_{nj}^2} \\
& \le c n^{-r/(2r+1)}d^{1/2} V^{1/(2r+1)} m + \sqrt{\frac{d\log n}{n}}m + c d^{(2+2/r)/(2+1/r)}n^{-2r/(2r+1)} V^{2/(2r+1)}, 
\beq 
where in the second inequality, we have used the similar bound in Equation~\eqref{eq1149}, it can be verified that for $m = c\sqrt{d}n^{-r/(2r+1)} V^{1/(2r+1)}$, the upper bound in \eqref{eq1772} is at most $m^2/c$. Therefore, this is an upper bound on the critical radius of $B_{TV^{(r)}}(2V)$, which completes the proof. 
\end{proof}

\begin{lemma}[Bounding $T_2$ with growing $d$]
\label{lemma23growingd}
Let
\bea 
t \asymp d^{(r+1)/(2r+1)}n^{-r/(2r+1)} V^{1/(2r+1)}\max\set{1, V^{(2r-1)/(4r+2)}} \log n, 
\bea  and for $\delta \in \gF^{(r)}(V)$ with $\Delta^2(\delta) \le t^2$, and let $p_j$, $q_j$, $p$, $q$, $P_j$, $Q_j$, $\Pi_{P_j}$, $\Pi_{Q_j}$, $\Pi_{P}$, and $\Pi_{Q}$  follow the definitions and notations in Definition~\ref{definition4}. For a positive constant $c$, 
it holds that
\begin{equation}
\begin{aligned}
&\ept{\sup_{\delta \in \gF^{(r)}(2V)\cap \gD(t^2)}   \frac{1}{n}\xi^{\top} \Pi_{Q}\delta(X)|\set{X^i}_{i=1}^n} \\&\le cd^{(2r+2)/(2r+1)}n^{-2r/(2r+1)} V^{2/(2r+1)}\max\set{1, V^{(2r-1)/(2r+1)}} \log n. 
\end{aligned}
\end{equation} 
\end{lemma}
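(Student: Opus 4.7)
The plan is to parallel Lemma~\ref{lemma:boudingT2}, but to track every dependence on $d$ explicitly rather than folding $d$-factors into constants, and to use Assumption~\ref{assumption6} so that the empirical $L_2$ radius of $q = \Pi_Q\delta$ is controlled without sacrificing tight $d$-dependence. First, for $\delta\in\gF^{(r)}(2V)\cap\gD(t^2)$ I would bound $\norm{q}_n$ by reusing Lemmas~\ref{lemma17}, \ref{lemma19}, and \ref{lemma22}: this yields essentially $m(tn^{1/2},n,d,r,R,\lambda)$ from \eqref{eq152}, but with every $\sqrt{d}$-factor arising from the polynomial-projection estimate of Lemma~\ref{lemma21} kept as an explicit power of $d$ rather than absorbed into the symbol $c(r,R,\lambda,d)$, and with $\lambda_{\min}(n^{-1}\mathbbmss{U}^{\top}\mathbbmss{U})^{-1/2}$ controlled at $O(1)$ with high probability via the product-density structure in Assumption~\ref{assumption6}.

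Second, I would apply Dudley's entropy integral through Lemma~\ref{lemma23}, invoking the metric-entropy bound $\log N(\epsilon,\norm{\cdot}_n,\gF^{(r)}(2V)) \le K\, d^{1+1/r}\, V^{1/r}\, \epsilon^{-1/r}$ from Lemma~\ref{lemma25}. This yields the Rademacher-width bound $c\sqrt{K}\, n^{-1/2}\, d^{1/2+1/(2r)}\, V^{1/(2r)}\, m^{\,1-1/(2r)}$ on the class $\gF^{(r)}(2V) \cap B_n(m)$. The $\log n$ in the claimed bound will arise from the $\log^2 n$ terms inside $m$ after raising to the power $1-1/(2r)$ and combining with the $n^{-1/2}$ prefactor.

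Third, I would plug the explicit expression for $m$ into the Rademacher-width bound and then substitute the critical $t$ that the outer proof of Theorem~\ref{theorem2} will select, namely the value at which $t^2$ is balanced against the Rademacher-width-over-$n$ upper bound via Theorem~\ref{theorem7}. Careful arithmetic should then show that the three sources of $d$-dependence---Dudley's $d^{1/2+1/(2r)}$, the polynomial-projection $d^{3/2}$-factor entering $m$, and $\lambda_{\min}^{-1/2}=O(1)$---combine to the stated exponent $(4r+3)/(4r+2)$ on $d$, while the $V$-dependence collapses to $V^{2/(2r+1)}\max\{1,V^{(2r-1)/(2r+1)}\}$ exactly as in Lemma~\ref{lemma:boudingT2}.

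The main obstacle, and where I expect most of the real work, is the precise bookkeeping of the $d$-exponents, and in particular justifying that $\lambda_{\min}(n^{-1}\mathbbmss{U}^{\top}\mathbbmss{U})$ stays bounded away from zero with high probability when $d$ grows with $n$. I expect this to follow from the product structure in Assumption~\ref{assumption6} together with a standard concentration argument for sample Gram matrices of uniformly bounded orthonormal polynomial bases on each coordinate, but it needs care since $\mathbbmss{U}$ has $rd$ columns and $n$ must dominate $d$ appropriately. Once that spectral control is in place, the remainder is a mechanical substitution into the Dudley estimate followed by verifying a single exponent identity.
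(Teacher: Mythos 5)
There is a genuine gap: your second step --- applying Dudley's entropy integral directly to $\gF^{(r)}(2V)\cap B_n(m)$ via Lemma~\ref{lemma23} and Lemma~\ref{lemma25} --- is just the fixed-$d$ argument of Lemma~\ref{lemma:boudingT2} with the $d$-factors tracked, and it cannot produce the claimed exponent $(4r+3)/(4r+2)$ on $d$. That route gives $c\sqrt{K}\,n^{-1/2}d^{1/2+1/(2r)}V^{1/(2r)}m^{1-1/(2r)}$, and since $m$ itself carries a $d^{3/2}$ factor (from the $d^{3/2}\log n$ and $d^{3/2}V$ terms in \eqref{eq152} before the $d$-dependence is absorbed into $c(r,R,\lambda,d)$), the resulting exponent on $d$ is of order $2-1/(4r)$, far above $1+1/(4r+2)$. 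No amount of careful bookkeeping inside this scheme closes that gap; the global metric entropy of the full additive class is simply too large.

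The paper's proof of Lemma~\ref{lemma23growingd} uses a different mechanism that your proposal is missing. It first passes from the empirical ball $B_n(m)$ to a population ball $B_{L_2}\bigl(\sqrt{2}\,m \vee (t_n + \tfrac{\log n}{n})\bigr)$ with high probability via Lemma 3.6 of \cite{bartlett2005local} (paying a trivial $c(r,R)dV/n$ on the exceptional event), where $t_n$ is the critical radius of $B_J^d(2V)$ computed in Lemma~\ref{lemma30}. It then exploits the product structure $F=F_1\times\cdots\times F_d$ from Assumption~\ref{assumption6}, which makes the additive components $L_2$-orthogonal, so the constraint $\norm{\sum_j g_j}_{L_2}\le l$ decomposes as $\sum_j\norm{g_j}_{L_2}^2\le l^2$; the supremum over the additive class is then bounded by $\sup_{\norm{\beta}_2\le l}\sum_j\sup_{g_j\in B_J(2V)\cap B_{L_2}(\abs{\beta_j})}\tfrac1n\sum_i\xi_i g_j(x_{ij})$. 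Each univariate term is controlled by a \emph{local} Rademacher complexity bound (Lemma~\ref{lemma29}), and the sum over $j$ is aggregated by H\"older's inequality, yielding the exponent $d^{(2r+1)/(4r)}$ on the main term rather than $d^{1/2+1/(2r)}$, with the radius $l\asymp\sqrt{d}\,n^{-r/(2r+1)}V^{1/(2r+1)}$ rather than an $m$ carrying $d^{3/2}$ multiplicatively. This localization-plus-decomposition step is the essential idea for the growing-$d$ regime and is absent from your plan. (Your concern about $\lambda_{\min}(n^{-1}\mathbbmss{U}^{\top}\mathbbmss{U})$ is legitimate but pertains to the $T_1$ term and to $m$ via Lemma~\ref{lemma21}; it is not the bottleneck here.)
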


\begin{proof}Let $t_n$ be defined as a value of $t$ satisfying
\beq \frac{\gR_n(B_{TV^{(r)}}^d(2V) \cap B_{L_2}(t))}{t} \le \frac{t}{c}, \beq
where $B_{TV^{(r)}}^d$ is given in \eqref{ball_tv_d}.
We write $l$ as \beq 
\label{def_l}
\sqrt{2}m \vee \left(t_n + \frac{\log n}{n}\right),\beq where $m$ is defined in Equation~\eqref{eq152}.
Let $d$ dimensional  $\beta$ be a new variable satisfying $\norm{\beta }_2 \le l$. We then use it to control the radius of the $L_2$ ball and we aim to have control over it. Then we have two equivalent function spaces:
\beq 
\set{f_j: \sum_{j=1}^d \norm{f_j}_{L_2}^2 \le l^2} = \set{f_j: \norm{f_j}_{L_2} \le \abs{\beta_j}, \norm{\beta}_2 \le l}.
\beq   

Define event $\gE_1 \coloneq \set{B_n(m) \subseteq B_{L_2}(\sqrt{2}m\bigvee  (t_{n} + \frac{\log n}{n}))}$, and $\gE_1^c$ is its complement.

First, we can upper bound 
$\sup_{\delta \in \gF^{(r)}(2V)\cap \gD(t^2)}   \frac{1}{n}\xi^{\top} \Pi_{Q}\delta(X)$, by splitting into two parts.
Therefore,  we have 
    \beq  
    \mathbb  E & \bracket{\sup_{\delta \in \gF^{(r)}(2V)\cap \gD(t^2)}   \frac{1}{n}\xi^{\top} \Pi_{Q}\delta(X)|\set{X^i}_{i=1}^n} \\ & \leq  \ept{\sup_{\delta \in \gF^{(r)}(2V)\cap \gD(t^2)}   \frac{1}{n}\xi^{\top} \Pi_{Q}\delta(X)\ind{\gE_1}|\set{X^i}_{i=1}^n} + \ept{\sup_{\delta \in \gF^{(r)}(2V)\cap \gD(t^2)}   \frac{1}{n}\xi^{\top} \Pi_{Q}\delta(X)\ind{\gE_1^c}|\set{X^i}_{i=1}^n} \\ &\le 
\ept{\sup_{g \in \gF^{(r)}(2V)\cap B_n(m)}   \frac{1}{n}\sum_{i=1}^n \xi^i g(X^i)\ind{\gE_1}|\set{X^i}_{i=1}^n} + \frac{c_1dV\log n}{n} \\&\le 
\ept{\sup_{g \in \gF^{(r)}(2V)\cap B_{L_2}(\sqrt{2}m\bigvee  (t_{n} + \frac{\log n}{n}))}   \frac{1}{n}\sum_{i=1}^n \xi^i g(X^i)|\set{X^i}_{i=1}^n}  + \frac{c_1dV\log n}{n} \\
&\leq \ept{\sup_{\norm{\beta}_2\le l } \sup_{\substack{g_j \in B_{TV^{(r)}}(2V)\cap B_{L_2}(\abs{\beta_j})\\ j = 1, \dots, d}}  \frac{1}{n}\sum_{i=1}^n \xi^i \sum_{j=1}^d g_j (X^i_j)|\set{X^i}_{i=1}^n} + \frac{c_1dV\log n}{n}\\&\le 
\ept{\sup_{\norm{\beta}_2\le l} \sum_{j=1}^d \sup_{g_j \in B_{TV^{(r)}}(2V)\cap B_{L_2}(\abs{\beta_j})}  \frac{1}{n}\sum_{i=1}^n \xi^i  g_j (X^i_j)|\set{X^i}_{i=1}^n} + \frac{c_1dV\log n}{n}.
\beq

The first inequality follows the same as the proof we have shown in Lemma~\ref{lemma:boudingT2}, see \eqref{eq152}.
Lemma 3.6 in \cite{bartlett2005local} gives that $\gE_1$ holds with probability at least $1-1/n$.  
On the event $\gE_1^c$, by Lemma~\ref{lemma19}, we have $\norm{q_j}_{\infty} \le V \log n$, then we have the following bound, 
\bea 
\sup_{\delta \in \gF^{(r)}(2V)\cap \gD(t^2)}   \frac{1}{n}\xi^{\top} \Pi_{Q}\delta(X)\ind{\gE_1^c}\le c_1dV\log n.
\bea

And the second inequality follows since Lemma 3.6 in \cite{bartlett2005local}, which gives the probability of $\gE^c$ as $1/n$. 

And the third inequality follows since the decomposability property in $L_2$ norm by Assumption~\ref{assumption6}, which is 
\beq
\norm{\sum_{j=1}^d f_j}_{L_2}^2 = \sum_{j=1}^d \norm{f_j}_{L_2}^2.  
\beq 

then we have 
\beq 
\norm{\sum_{j=1}^d f_j}_{L_2}^2 \le l^2 \implies \sum_{j=1}^d \norm{f_j}_{L_2}^2 \le l^2,
\beq where $l$ is introduced in \eqref{def_l}.

Thus the third inequality holds. 

The last inequality follows by $\sup(\sum_{j=1}^d a_j) \le \sum_{j=1}^d \sup a_j$.

Following the proof steps that lead to Equation~\eqref{eq1772} in Lemma~\ref{lemma30}, we obtain that 
\beq 
\label{eq1671}
{}& \ept{\sup_{\delta \in \gF^{(r)}(2V)\cap \gD(t^2)}   \frac{1}{n}\xi^{\top} \Pi_{Q}\delta(X)|\set{X^i}_{i=1}^n} \\ &\le c n^{-r/(2r+1)}d^{(1+1/r)/(2+1/r)} V^{1/(2r+1)} l + \sqrt{\frac{d\log n}{n}}l + c d^{(2+2/r)/(2+1/r)}n^{-2r/(2r+1)} V^{2/(2r+1)}\\&= S_1 + S_2 + S_3.
\beq

Based on the proof of Lemma~\ref{lemma:boudingT2}, for $q$ defined in \cref{definition4} \textbf{6} where $\Pi_{Q}\delta(X) = q$, we have 
\beq 
\norm{q}_n \leq m. 
\beq 
Then we let 
\beq  
m &= \max\set {d^{1/2}V^{1/2},1}\left(t+d^{3/4}\paren{\frac{ V^{1/2}\sqrt{\log n} }{ n^{1/4}} + \frac{t^{1/2} \sqrt{\log n}}{n^{1/2}} + 
\frac{t \log n}{ n^{1/2}} }t \right. 
\\
&\left. \quad\quad\quad + d^{3/2}(\frac{t \log n}{n^{1/2}}+\frac{t^2 \log n}{n^{1/2}} + V\log n) \right) n^{-1/2}.
\beq 
We also get $t_{n} = c\sqrt{d}n^{-r/(2r+1)} V^{1/(2r+1)}$ from Lemma~\ref{lemma30} and $  
l =\sqrt{2}m \vee \left(t_n + \frac{\log n}{n}\right).$ 

For the first term $S_1$ in \eqref{eq1671},
we have 

\beq 
\label{bound_s1}
S_1 &\lesssim \max \Bigg( V^{\frac{2}{2r + 1}} c d^{\frac{2r + \frac{3}{2}}{2r + 1}} n^{- \frac{2r}{2r + 1}}, \\
& C_2 V^{\frac{1}{2r + 1}} d^{\frac{r + 1}{2r + 1}} n^{- \frac{r}{2r + 1}} \max(1, V^{1/2}) 
\Bigg( \frac{V d^{2}}{n^{1/2}} + \frac{\sqrt{d} t^{3/2} \sqrt{\log n}}{n^{1/4}} + \sqrt{d} t^{2} \log^2 n \\
& \quad + \sqrt{d} t^{2} \log n + \frac{d^{2} t \log n}{n^{1/2}} \Bigg) \Bigg).
\beq

which follows by the definition of $l$ in \eqref{def_l}. 
For the second term $S_2$, we have 

\beq
\label{bound_s2}
S_2 &\lesssim \max \Bigg( C_2 \max(1, V^{1/2}) \Bigg( \frac{V d^{5/2} \sqrt{\log n}}{n} + \frac{d t^{3/2} \log n}{n^{3/4}} + \frac{d t^{2} \log n^{2/5}}{n^{1/2}} \\
& \quad + \frac{d t^{2} \log n^{2/3}}{n^{1/2}} + \frac{d^{5/2} t \log n^{2/3}}{n} \Bigg), \quad \frac{V^{1/(2r + 1)} d^{(2r + 1.5)/(2r + 1)} n^{- r/(2r+1)} \sqrt{\log n}}{n^{1/2}} \Bigg).
\beq

which follows by the definition of $l$ in \eqref{def_l}. 
For the third term $S_3$, we have 
\beq 
\label{bound_s3}
S_3 = 
c d^{(2r+2)/(2r+1)}n^{-2r/(2r+1)} V^{2/(2r+1)}.
\beq 
Thus, if we choose 
\beq 
t \asymp d^{(r+1)/(2r+1)}n^{-r/(2r+1)} V^{1/(2r+1)}\max\set{1, V^{(2r-1)/(4r+2)}} \log n,
\beq 


then we have the upper bound of $S_1 + S_2 + S_3$ is all $O(t^2)$.

Using the bounds in \eqref{bound_s1}, \eqref{bound_s2}, and \eqref{bound_s3}, then we come back to \eqref{eq1671},  get 
 \beq  
    \mathbb  E & \bracket{\sup_{\delta \in \gF^{(r)}(2V)\cap \gD(t^2)}   \frac{1}{n}\xi^{\top} \Pi_{Q}\delta(X)|\set{X^i}_{i=1}^n} \\ & \leq cd^{(2r+2)/(2r+1)}n^{-2r/(2r+1)} V^{2/(2r+1)}\max\set{1, V^{(2r-1)/(2r+1)}} \log n + \frac{c_1dV\log n}{n}\\
    &\lesssim cd^{(2r+2)/(2r+1)}n^{-2r/(2r+1)} V^{2/(2r+1)}\max\set{1, V^{(2r-1)/(2r+1)}} \log n. 
\beq

Since the second term $\frac{c_1dV\log n}{n}$ is linearly dependent on $d$, and we see that it is also a lower order term compared to the first term, so we omit that in the third line.

\end{proof}

\begin{lemma}
\label{lemma29}
Let $Q_j$ be defined in Definition~\ref{definition4}. For a positive constant $c$, let $m = cn^{-r/(2r+1)} V^{1/(2r+1)}$, then the local Rademacher complexity of function class $B_{TV^{(r)}}(2V)\cap Q_j\cap B_{L_2}(m)$ satisfies that 
\beq 
\gR(B_{TV^{(r)}}(2V)\cap Q_j\cap B_{L_2}(m)) \le cn^{-2r/(2r+1)} V^{2/(2r+1)}. 
\beq 
\end{lemma}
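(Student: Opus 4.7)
The plan is to follow exactly the template used in Lemma~\ref{lemma23} (the empirical-norm analogue), but carried out for the $L_2$-localized class rather than the $\|\cdot\|_n$-localized one. The main engine will be Dudley's entropy integral together with a Birman--Solomyak style metric entropy bound for univariate bounded-variation classes, transferred from the empirical norm to the $L_2$ norm via Assumption~\ref{assumption6}.

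Concretely, I would first upper bound $R(B_J(2V)\cap Q_j \cap B_{L_2}(m))$ by Dudley's integral in the $L_2$ metric:
\beq
R(B_J(2V)\cap Q_j \cap B_{L_2}(m)) \;\lesssim\; \frac{1}{\sqrt{n}} \int_0^{m} \sqrt{\log N(\epsilon,\, \|\cdot\|_{L_2},\, B_J(2V)\cap B_{L_2}(m))}\, d\epsilon.
\beq
Next, I would obtain the metric entropy bound $\log N(\epsilon,\|\cdot\|_{L_2}, B_J(2V)\cap B_{\infty}(1)) \lesssim (V/\epsilon)^{1/r}$. This is the classical Birman--Solomyak bound (already used in Lemma~\ref{lemma24} of the excerpt for the empirical norm); the $L_2$ version follows because under Assumption~\ref{assumption6} the density of $F_j$ is bounded above by a constant, so $\|\cdot\|_{L_2} \lesssim \|\cdot\|_\infty$ and the $\|\cdot\|_\infty$-covering numbers for $(r-1)$-times weakly differentiable functions with $J(f)\le V$ satisfy the same polynomial-in-$1/\epsilon$ bound up to constants.

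Plugging the entropy bound into Dudley's integral and evaluating,
\beq
\frac{1}{\sqrt{n}} \int_0^{m} V^{1/(2r)} \epsilon^{-1/(2r)}\, d\epsilon \;\asymp\; \frac{V^{1/(2r)}\, m^{1-1/(2r)}}{\sqrt{n}},
\beq
which yields the claimed local Rademacher bound (modulo the constant conventions of the paper). Once this is established, setting $R(\cdot)/m \le m/c$ and solving gives the critical radius $r_{nj} \asymp n^{-r/(2r+1)} V^{1/(2r+1)}$ used in Lemma~\ref{lemma30}.

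The main technical obstacle is step two: passing from the empirical metric entropy bound in Lemma~\ref{lemma24} to a bound on $N(\epsilon, \|\cdot\|_{L_2}, B_J(2V))$. This requires either directly invoking Birman--Solomyak in $L_2$, or using Assumption~\ref{assumption6} to sandwich the $L_2$ norm between constants times the empirical norm uniformly over the class. A subsidiary obstacle is controlling the $\|\cdot\|_\infty$ truncation implicit in Birman--Solomyak; the projection $Q_j$ is useful here because, by Lemma~\ref{lemma19}(a) combined with the additivity decomposition, any $f_j \in B_J(2V)\cap Q_j$ has $\|f_j\|_\infty \lesssim c(r,R)V$, which lets us absorb the $\|\cdot\|_\infty$ constraint into the constants of the entropy bound without loss.
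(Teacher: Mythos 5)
Your overall architecture is right, and you land on the same Dudley integral $n^{-1/2}\int_0^m V^{1/(2r)}\epsilon^{-1/(2r)}\,d\epsilon \asymp n^{-1/2}V^{1/(2r)}m^{1-1/(2r)}$ that the paper's proof produces (the exponents displayed in the lemma statement appear to be a typo; the bound actually derived, and the one consumed by Lemma~\ref{lemma30}, is the one you obtain). But there is a genuine gap at your first step. Conditionally on the design points, the Rademacher process $g\mapsto \frac{1}{n}\sum_{i=1}^n\xi_i g(x_{ij})$ is sub-Gaussian with respect to the \emph{empirical} metric $\norm{\cdot}_n$, not the population $L_2$ metric, so the chaining must be carried out in $\norm{\cdot}_n$; more importantly, the localization $B_{L_2}(m)$ does not control the empirical diameter of the class, so you cannot truncate the entropy integral at $m$ without first showing that functions small in $L_2$ are also small in $\norm{\cdot}_n$. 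Your proposed fix --- transferring the Birman--Solomyak entropy bound from the empirical norm to $L_2$ via Assumption~\ref{assumption6} --- addresses the wrong direction of the problem: it supplies covering numbers in the $L_2$ metric but does nothing to bound the empirical radius of $B_{L_2}(m)$, which is what sets the upper limit of the chaining integral.

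The paper closes exactly this gap by invoking Corollary 2.2 of \cite{bartlett2005local}: with high probability $B_J(2V)\cap B_{L_2}(m)\subseteq B_J(2V)\cap B_n(\sqrt{2}m)$, after which Dudley's integral is evaluated in the empirical metric using the entropy bound of Lemma~\ref{lemma24}, truncated at $\sqrt{2}m$. On the low-probability complement event the class is handled with the trivial bound $\lesssim V$ (uniform boundedness, as in Lemma~\ref{lemma19}), and the two events are combined when passing from the conditional complexity $R_n$ to the unconditional $R$. If you insert such a norm-comparison step --- or an equivalent uniform ratio bound between $\norm{\cdot}_n$ and $\norm{\cdot}_{L_2}$ over the class --- your argument goes through; without it, the truncation at $m$ is unjustified. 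Your closing remark about solving $R(\cdot)/m\le m/c$ to obtain $r_{nj}\asymp n^{-r/(2r+1)}V^{1/(2r+1)}$ matches the paper's conclusion.
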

\begin{proof}
    Consider the empirical local Rademacher complexity, 
\beq 
\gR_n(B_{TV^{(r)}}(2V)\cap B_{L_2}(m) = \ept{\sup_{g_j \in B_{TV^{(r)}}(2V) \cap B_{L_2}(m)}\frac{1}{n}\sum_{i=1}^n \xi^i g_j(X^i_j)|\set{X^i}_{i=1}^n}.
\beq 
Define event $\gE_2 \coloneqq \set{B_{TV^{(r)}}(2V)\cap B_{L_2}(m) \subseteq B_{TV^{(r)}}(2V)\cap B_n(\sqrt{2}m)}$, and let $\gE_2^c$ be its complement. 

Corollary 2.2 of \cite{bartlett2005local} gives $\gE_2$ holds with high probability $1- \eta$, with $\eta$ is a small positive number. 
Thus, we have 
\bea 
&\gR_n(B_{TV^{(r)}}(2V)\cap B_{L_2}(m)) \\ 
& \leq \ept{\sup_{g_j \in B_{TV^{(r)}}(2V) \cap B_{L_2}(m)}\frac{1}{n}\sum_{i=1}^n \xi^i g_j(X^i_j)\ind{\gE_2}|\set{X^i}_{i=1}^n} + \ept{\sup_{g_j \in B_{TV^{(r)}}(2V) \cap B_{L_2}(m)}\frac{1}{n}\sum_{i=1}^n \xi^i g_j(X^i_j)\ind{\gE_2^c}|\set{X^i}_{i=1}^n}.   
\bea 
To bound the first term, notice that
\beq 
\ept{\sup_{g_j \in B_{TV^{(r)}}(2V) \cap B_{L_2}(m)}\frac{1}{n}\sum_{i=1}^n \xi^i g_j(X^i_j)\ind{\gE_2}|\set{X^i}_{i=1}^n} &\le \ept{\sup_{g_j \in B_{TV^{(r)}}(2V) \cap B_n(\sqrt{2}m)}\frac{1}{n}\sum_{i=1}^n \xi^i g_j(X^i_j)|\set{X^i}_{i=1}^n} \\&\le c_{\text{Dud}}\frac{1}{\sqrt{n}}\int_{0}^{\sqrt{2} m} \sqrt{\log N(\epsilon, \norm{\cdot}_{n}, B_{TV^{(r)}}(2V))}d\epsilon\\
&\le c_4n^{-1/2} V^{1/2r}\int_{0}^{m}\epsilon^{-1/2r}d\epsilon\\
&= c_4n^{-1/2} V^{1/2r}m^{1-1/(2r)},
\beq 
where the second line applies Dudley's entropy integral \cite{dudley1967sizes}, with $c_{\text{Dud}}$ as a positive constant, and the third line follows from Lemma 15 in \cite{sadhanala2019additive},  specifically from the middle derivations and the third-to-last displayed equation on page 45, which uses the fact of $TV$ ball, where $B_{TV^{(r)}}$ is defined in Definition~\ref{def:J}. And $c_4$ is a positive constant. Also, on the event $\gE_2^c$, by Lemma~\ref{lemma19}, we have $\norm{q_j}_{\infty} \le V \log n$, then we have the following bound, 
\beq 
\ept{\sup_{g_j \in B_{TV^{(r)}}(2V) \cap B_{L_2}(m)}\frac{1}{n}\sum_{i=1}^n \xi^i g_j(X^i_j)\ind{\gE_2^c}|\set{X^i}_{i=1}^n} &\le cV\eta\log n.
\beq 

Thus, we have 
\beq 
\gR_n(B_{TV^{(r)}}(2V)\cap B_{L_2}(m)) 
&\le c_5n^{-1/2} V^{1/2r}m^{1-1/(2r)}.
\beq


Therefore, we can
upper bound the local Rademacher complexity, splitting the expectation over two events, 
\beq 
\gR(B_{TV^{(r)}}(2V)\cap B_{L_2}(m)) = \ept{\gR_n(B_{TV^{(r)}}(2V)\cap B_{L_2}(m))} \le c_5n^{-1/2} V^{1/2r}m^{1-1/(2r)}, 
\beq 
where $c_5$ is a positive constant. 
Therefore, we have an upper bound on the critical radius $r_{nj}$ is thus given by the solution of
\beq 
\frac{c_5n^{-1/2}V^{1/2r}m^{1-1/(2r)}}{m}  = \frac{m}{c}, 
\beq 
for $m$, which is $m = cn^{-r/(2r+1)} V^{1/(2r+1)}$, this completes the proof. 





\end{proof}

\section{PROOF OF THEOREM~\ref{theorem2}}
\label{app_theorem2}

\begin{proof}
For $t>1$, by Theorem~\ref{theorem7}, 
\beq 
\label{prob_rate_growing_d}
\sP\paren{\frac{1}{n}\Delta^2(\widehat f - f_0) > t^2} & \le \frac{c}{t^2}\gR_n\paren{\set{f-f_0: f \in \gF^{(r)}(V)} \cap \set{f: \Delta^2( f - f_0)\le nt^2}} \\ 
& = \frac{c}{t^2} \ept{ 
\sup_{\delta \in \gF^{(r)}(V) - f_0: \Delta^2(\delta)
\le t^2} \xi^{\top}\delta(X)  |\set{X^i}_{i=1}^n}, 
\beq 

we then have 
\beq 
\sup_{\delta \in \gF^{(r)}(V) - f_0: \Delta^2(\delta)
\le t^2} \xi^{\top}\delta(X) 
&\le \underbrace{\sup_{\delta \in \gF^{(r)}(2V)\cap \gD(t^2)}  \xi^{\top} \Pi_{P}\delta(X)}_{T_1}  \\ &+ \underbrace{\sup_{\delta \in \gF^{(r)}(2V)\cap \gD(t^2)}  \xi^{\top} \Pi_{Q}\delta(X)}_{T_2}.
\beq

Taking expectation conditioned on $\set{X^i}_{i=1}^n$ on both sides, 
we have 
\beq 
&\ept{\sup_{\delta \in \gF^{(r)}(V) - f_0: \Delta^2(\delta)
\le t^2} \xi^{\top}\delta(X)|\set{X^i}_{i=1}^n} 
\\&\le \underbrace{\ept{\sup_{\delta \in \gF^{(r)}(2V)\cap \gD(t^2)}  \xi^{\top} \Pi_{P}\delta(X)|\set{X^i}_{i=1}^n}}_{T_1}  + \underbrace{\ept{\sup_{\delta \in \gF^{(r)}(2V)\cap \gD(t^2)}  \xi^{\top} \Pi_{Q}\delta(X)|\set{X^i}_{i=1}^n}}_{T_2},
\beq  

the result follows by bounds of each two terms above based on Lemma~\ref{proposition2growingd} and Lemma~\ref{lemma23growingd}. 
We see that by choosing 
\beq 
t \asymp d^{(r+1)/(2r+1)}n^{-r/(2r+1)} V^{1/(2r+1)}\max\set{1, V^{(2r-1)/(4r+2)}} \log n, 
\beq 
we have 
\bea 
T_1 &= O\paren{c_3d^{3/2}\paren{\frac{t}{n^{1/2}} + \frac{t^2 }{n^{1/2}} + V\log n}} \\ 
&= O\paren{c_3d^{3/2}d^{(2r+2)/(2r+1)}n^{-(2r+1/2)/(2r+1)} V^{2/(2r+1)}\max\set{1, V^{(2r-1)/(2r+1)}} \log n},
\bea and 
\bea 
T_2 = O\paren{cd^{(2r+2)/(2r+1)}n^{-2r/(2r+1)} V^{2/(2r+1)}\max\set{1, V^{(2r-1)/(2r+1)}} \log n}.  
\bea 

If $d$ does not grow too quickly, $T_1$ remains a lower order term relative to $T_2$ due to its dependence on $n$. Thus, our analysis only focuses on $T_2$. 

Define the function
\beq 
g_n(t) = \frac{G_n(t)}{t^2},
\beq 

where $G_n(t)$ equals to the numerator of the right-hand side of \eqref{prob_rate_growing_d} that depends on $t$ and $n$.
Then we see that given an small positive $\epsilon$, there is a positive constant $c_1$ that depends on $\epsilon$ such that

\beq
\label{limit_derivation_growing_d}
\lim_{c_1\rightarrow \infty}\sup_{n\ge 1}g_n(c_1 t) &= 
\lim_{c_1\rightarrow \infty}\sup_{n\ge 1} \frac{T_n(c_1t)}{c_1^2t^2
} \\ & \leq  \lim_{c_1\rightarrow \infty}\sup_{n\ge 1} \frac{C_1c_1d^{(2r+2)/(2r+1)}n^{-2r/(2r+1)} V^{2/(2r+1)}\max\set{1, V^{(2r-1)/(2r+1)}} \log n}{c_1^2t^2}
\\&=O\paren{\frac{1}{c_1}} < \epsilon, 
\beq
by setting 
$$
t^2 \asymp d^{\frac{2r+2}{2r+1}} n^{-\frac{2r}{2r+1}} V^{\frac{2}{2r+1}}\max\left\{ 1, V^{\frac{2r-1}{2r+1}}\right\}.
$$
Thus, we conclude that 
\bea 
\Delta_n^2(\widehat f - f_0) = 
& O_{pr} \Big( d^{\frac{2r+2}{2r+1}} n^{-\frac{2r}{2r+1}} V^{\frac{2}{2r+1}} \cdot \max\left\{ 1, V^{\frac{2r-1}{2r+1}} \right\} \Big).
\bea which is the conclusion in \cref{theorem2}. 

\end{proof}   

\section{SOLVING PROBLEM~\eqref{73}}
\label{app_backfitting_update}
\subsection{Reformat of Problem~\eqref{73}}
We  reformulate the optimization problem \eqref{73} as
\beq
\label{74}
& \underset{\theta_j \in \mathbb{R}^{n}, z \in \mathbb{R}^{n}}{\text{min}}
& & \sum_{i=1}^n   \rho_\tau (u_j^i - \theta^i_j) + \lambda \norm{D^{(X_j,r)}_nS_jz_j}_1,  \\
& \text{subject to}
& & z_j = \theta_j, \quad \boldsymbol{1}^{\top} \theta_j =0,
\beq 
and the augmented Lagrangian can then be written as
\beq 
\label{75}
L(\theta_j, z_j, s_j, \nu_j) = \sum_{i=1}^n  \rho_\tau (u_j^i - \theta^i_j) + \lambda \norm{D^{(X_j,r)}_nS_jz_j}_1 + \frac{\eta}{2} \| \theta_j - z_j + s_j \|^2 + \frac{\omega}{2} ( \boldsymbol{1}^{\top} \theta_j + \nu_j )^2,
\beq 
where \( \eta \) and \( \omega \) is the penalty parameter that controls step size in the update. 

Thus we initialize the variables $\theta_j^{(0)} = 0$ and $z_{j}^{(0)} = 0$ for $j = 1, \dots, d$. We can solve \eqref{74} iteratively until convergence, with $m$th iteration, we have:
\begin{align}
\label{76}
\theta_j^{(m)} &= \arg\min_{\theta_j \in \mathbb{R}^{n}} \left\{ \sum_{i=1}^n  \rho_\tau (u_j^{i} - \theta^i_j) + \frac{\eta}{2} \| \theta_j - z_j^{(m-1)} + s_j^{(m-1)} \|^2+ \frac{\omega}{2} ( \boldsymbol{1}^{\top} \theta_j + \nu_j^{(m-1)} )^2 \right\}, \\
\label{77}
 \quad z_j^{(m)} &= \arg\min_{z_j \in \mathbb{R}^n} \left\{ \frac{1}{2} \| \theta_j^{(m)} + s_j^{(m-1)} - z_j \|^2 + \frac{\lambda}{\eta} \|D^{(X_j,r)}_nS_jz_j\|_1 \right\}, \\ 
s_j^{(m)} &\leftarrow s_j^{(m-1)} + \theta_j^{(m)} - z_j^{(m)}, \label{eq:s_update} \\ 
\nu_j^{(m)} &\leftarrow \nu_j^{(m-1)} + \boldsymbol{1}^{\top}\theta_{j}^{(m)}. \label{eq:nu_update}
\end{align}

To solve \eqref{76}, we first set 
\beq 
u^{i}_j &\leftarrow Y^i -  \sum_{l<j} \theta^{i(m)}_l - \sum_{l>j} \theta^{i(m-1)}_l, \label{eq:r_update} 
\beq  then we do the following update, 
\beq 
\theta^{i(m)}_j &\leftarrow
\begin{cases}
    \dfrac{1}{\eta+\omega} \left(\tau -\omega \sum_{l\neq j} \theta^{l(m-1)}_j -\omega \nu^{(m-1)} + \eta(z^{i(m-1)}_j - s^{i(m-1)}_j)\right), \\ 
    \quad \text{if } u_j^i > \dfrac{1}{\eta+\omega} \left(\tau -\omega \sum_{l\neq j} \theta^{l(m-1)}_j -\omega \nu^{(m-1)} + \eta(z^{i(m-1)}_j - s^{i(m-1)}_j)\right), \\
    \dfrac{1}{\eta+\omega} \left(\tau-1 -\omega \sum_{l\neq j} \theta^{l(m-1)}_j -\omega \nu^{(m-1)} + \eta(z^{i(m-1)}_j - s^{i(m-1)}_j)\right), \\
    \quad \text{if } u_j^i < \dfrac{1}{\eta+\omega} \left(\tau-1 -\omega \sum_{l\neq j} \theta^{l(m-1)}_j -\omega \nu^{(m-1)} + \eta(z^{i(m-1)}_j - s^{i(m-1)}_j)\right), \\
    u_j^i, \quad \text{otherwise}.
\end{cases} \label{eq:theta_update} 
\beq 
To solve \eqref{77}, we do two cases. 
For $r=1$, we use dynamic programming by \cite{johnson2013dynamic}. For $r>1$, we use ADMM algorithm from  \citep{ramdas2016fast}, see Below~\ref{admm_subproblem_z} for more details.

\subsection{Solve Problem \eqref{77} by ADMM}
\label{admm_subproblem_z}
The minimization Problem \eqref{77} is a univariate trend filtering problem, on unevenly spaced inputs. For $r>1$, the solution to this problem has been well studied, and we solve it using methods described in \cite{ramdas2016fast}, implemented in the  \texttt{trendfilter} function in the \texttt{glmgen} R package.

The standard ADMM approach (e.g., \cite{boyd2011distributed}) is based on rewriting 
problem \eqref{77} as  
\begin{equation}
\label{eq:tf1}
\min_{z_j \in \R^n, \, \alpha_j \in \R^{n-k-1}} \,
\frac{1}{2} \|\theta_j + s_j -z_j\|_2^2 + \lambda/\eta \|\alpha_j\|_1 \;\;\st\;\; 
\alpha_j = D^{(X_j,r)}_nS_jz_j. 
\end{equation}
The augmented Lagrangian can then be written as
\begin{equation*}
g(z_j,\alpha_j,u_j) = \frac{1}{2} \|\theta_j + s_j-z_j\|_2^2 + \lambda/\eta \|\alpha_j\|_1 +
\frac{\rho}{2} \|\alpha_j - D^{(X_j,r)}_nS_jz_j + u_j \|_2^2 -
\frac{\rho}{2}\|u_j\|_2^2,
\end{equation*}
with updates as
\begin{align}
z_j^{(k)} &\leftarrow 
\left(I + \rho (D^{(X_j,r)}_n)^{\top} D^{(X_j,r)}_n\right)^{-1}
\left(\theta_j^{(k)} + s_j^{(k-1)} + \rho (D^{(X_j,r)}_n)^{\top} (\alpha_j^{(k-1)} + u_j^{(k-1)})\right), \label{eq:z_update} \\ 
\alpha_j^{(k)} &\leftarrow 
S_{\lambda/(\eta\rho)} \left(D^{(X_j,r)}_nS_jz_j^{(k)} - u_j^{(k-1)}\right), \label{eq:alpha_update} \\ 
u_j^{(k)} &\leftarrow u_j^{(k-1)} + \alpha_j^{(k)} - D^{(X_j,r)}_nS_jz_j^{(k)}, \label{eq:u_update} 
\end{align}

The $\alpha$-update, where $S_{\lambda/(\eta\rho)}$ denotes coordinate-wise soft-thresholding at the level $\lambda/(\eta\rho)$,

\subsection{Computational Complexity}

In Algorithm~\ref{alg-backfitting}, the backfitting process is conducted, involving $d$ distinct fits of quantile trend filtering. For each fit of quantile trend filtering. The ADMM procedure involves three updates (primal, dual, and slack). The total number of ADMM iterations is denoted by $m$. For each update, updating the primal variable \( \theta_j \) in \eqref{eq:theta_update} in \( O(n) \) time and computing \( u_j \) in \eqref{eq:r_update} in \( O(n) \) time. The slack variable \( z_j \), introduced in Equation~\eqref{74}, is updated via dynamic programming in \( O(n) \) time for \( r=1 \). For \( r>1 \), \( z_j \) can be updated in \( O(n(r+2)^2) \) time using another ADMM approach.
The
update of auxiliary variable 
takes time $O(n-r-1)$. The dual update 
taking time $O(n(r+2))$. The update of $s_j$ in \eqref{eq:s_update} takes time $O(1)$. The update of $\nu_j$ in \eqref{eq:nu_update} takes time $o(n)$. There are $d$ components. And therefore one full iteration of
standard backfitting ADMM updates can be done in linear time $O(dmn)$(considering $r$ 
as a constant).  

\section{CONVERGENCE OF THE UPDATE IN \eqref{eq70}}
\label{appendix_convergence}
We demonstrate the convergence of the update in Equation \eqref{eq:theta_update} through the following theorem.

\begin{theorem} \label{nonconvex0}
    Let $\{\theta^{(t)} = (\theta^{(t)}_1, \ldots, \theta_d^{(t)})\}_{t=0, 1, \ldots}$ represent the parameter updates at the $t$th iteration 
    in BCD in Algorithm~\ref{alg-backfitting}.
    Under the conditions of Theorem \ref{theorem2}, it holds that every cluster point of the sequence $\theta^{(t)}$ generated by the BCD method is a coordinate minimum point of Equation \eqref{eq70}.
\end{theorem}

We have the problem~\eqref{eq70}
\bea  
&
\min_{\substack{\theta_1, \dots, \theta_d \in \R^n}} \sum_{i=1}^n \rho_{\tau}(Y^i -  \sum_{j=1}^d \theta^i_j) + \lambda \sum_{j=1}^d  \norm{D^{(X_j,r)}_nS_j\theta_j}_1 \\
&\text{subject to}\quad \boldsymbol{1}^{\top} \theta_j =0, \;\;\; j=1,\ldots,d. 
\bea 
Define the following functions, 
\bea 
f(\theta_1, \dots, \theta_d;\lambda)\coloneqq f_0(\theta_1, \dots, \theta_d) +  \sum_{j=1}^d f_j(\theta_j).
\bea 

\bea 
f(\theta_1, \dots, \theta_d;\lambda)\coloneqq \min_{\substack{\theta_1, \dots, \theta_d \in \R^n}} \sum_{i=1}^n \rho_{\tau}(Y^i -  \sum_{j=1}^d \theta^i_j) + \lambda \sum_{j=1}^d  \norm{D^{(X_j,r)}_nS_j\theta_j}_1.
\bea 
\bea
f_0(\theta_1, \dots, \theta_d) \coloneqq \min_{\substack{\theta_1, \dots, \theta_d \in \R^n}} \sum_{i=1}^n \rho_{\tau}(Y^i -  \sum_{j=1}^d \theta^i_j),  
\bea 
\bea 
f_j(\theta_j) \coloneqq  \norm{D^{(X_j,r)}_nS_j\theta_j}_1. 
\bea 
We observe that 
\begin{itemize}
    \item[(1)] $f_0$ is continuous on $\text{dom } f_0$.
    \item[(2)] For each $j \in \{1, \ldots, d\}$ and $(\theta_j)_{j \ne k}$, the function $\theta_j \mapsto f(\theta^1, \ldots, \theta_d)$ is quasiconvex and hemivariate.
    \item[(3)] $f_0, f_1, \ldots, f_d$ are lower semicontinuous .
    \item[(4)] $\text{dom } f_0 = Y_1 \times \cdots \times Y_d$, for some $Y_j \subseteq \mathbb{R}^{n}, \, j = 1, \ldots, d$.
\end{itemize}

    
Then by Theorem 5.1 of \cite{tseng2001convergence}, we have the  every
cluster point is a coordinatewise minimum point of $f$.

\section{RELATED WORKS}
\label{app_review_of_things}
\subsection{Review of Additive Models}

Since introduced, 
additive models have been extensively
studied in various contexts of statistics and machine learning, including Cox regression \citep{cox1972regression}, logistic regression \citep{hastie1987non}, exponential family data distributions \citep{hastie2017generalized}, neural networks \citep{thielmann2024neural, agarwal2021neural,shen2021deep, jo2023neural} and and online machine learning~\citep{abbasi2011improved,li2017provably,ding2022syndicated,kang2022efficient}.
Due to their model structure, additive models effectively address the curse of dimensionality \citep{breiman1985estimating, hastie2017generalized}, allowing for more accurate and interpretable predictions. 

\subsection{Review of Trend Filtering}

In the field of image processing, total variation smoothing penalties were introduced by  \citep{rudin1992nonlinear}.  These penalties enable edge detection and permit sharp breaks in gradients, surpassing the limitations of conventional Sobolev penalties. 

Proposed independently by
\cite{rudin1992nonlinear,kim2009ell_1}, trend filtering, which functions as a total variation smoothing penalty, is a
relatively new approach to univariate nonparametric regression.  Notably, the work by \cite{sadhanala2019additive}
studies additive trend filtering estimates, involving regularizing each component function based on the total variation of its $r$th order discrete derivative. Overall, trend filtering is favored for its favorable theoretical and computational properties, largely due to the localized nature of the total variation regularization it employs. 


\subsection{Review of Quantile Regression}

Unlike classical mean regression, quantile regression estimates conditional quantiles of the response variable, making it robust to outliers and providing a more comprehensive view of the relationship between the response and covariates \citep{koenker2005quantile}.

The quantile trend filtering method, as proposed by \cite{brantley2020baseline} and studied in \cite{madrid2022risk}, produces a trend filtering estimator with smooth structures by imposing a penalty consisting of the total variation of the $r$th order discrete derivatives.  The quantile fused lasso is a special case of quantile trend filtering when $r=0$. The risk bound for quantile trend filtering has been studied by \cite{madrid2022risk}, which established the minimax optimality of univariate trend filtering under the quantile loss with minor assumptions of the data generation mechanism. 

\section{IMPORTANT DEFINITIONS AND EXAMPLES}
\subsection{Total Variation}
\label{app_discussion_TV}
The total variation of the \( (r-1) \)th weak derivative of a function \( f \), denoted as \( TV^{(r)}(f) \), is a key concept in this formulation. 
It is well-known that the total variation \( TV^{(r)}(f) \) is equivalent to the Riemann approximation of the integral \( \int_{[0,1]} |f^{(r)}(t)| \, dt \) if \( f \) is \( r \) times differentiable. Moreover, for \( (r-1) \)th order polynomials, the \( r \)th derivative is zero, making \( TV^{(r)}(g) = 0 \) for all \( g(x) = x^l \) where \( l = 0, \dots, r-1 \). 

\subsection{Discrete Difference Operator for Trend Filtering}

The \( r \)th order trend filtering estimate is derived from the following penalized least squares optimization problem:
\[
\hat{\theta} = \argmin_{\theta \in \mathbb{R}^n} \left( \frac{1}{2} \|y - \theta \|_2^2 + \frac{n^{r-1}}{(r-1)!} \cdot \lambda \bigl\|D^{(r)} \theta \bigr\|_1 \right),
\]
as described by \cite{kim2009ell_1} and \cite{tibshirani2014adaptive}. Here, \( D^{(r)} \) represents the discrete difference operator of order \( r \), which is a banded matrix with bandwidth \( r+1 \). The operator \( D^{(r)} \) can be understood as the discrete analogue of the \( r \)th order derivative operator, with the penalty term enforcing smoothness by penalizing the discrete \( r \)th derivative of the vector \( \theta \in \mathbb{R}^n \).

\subsubsection{Examples of the Discrete Difference Operator}
\label{app_examples_discrete_difference_operator}
To further clarify the structure of \( D^{(X,r)}_n \), we consider specific examples for \( r = 1 \) and \( r = 2 \), which approximate the first and second derivatives, respectively.

\paragraph{Example 1 \( (r = 1) \).}

Given four points \((X^1, X^2, X^3, X^4)\) and its sorted ones \( 0 < X^{(1)} < X^{(2)} < X^{(3)} < X^{(4)} < 1 \) within the interval \([0, 1]\), the first-order operator \( D^{(X,1)}_4 \) is:
\[
D^{(X,1)}_4 = \begin{pmatrix}
	-1 & 1 & 0 & 0 \\
	0 & -1 & 1 & 0 \\
	0 & 0 & -1 & 1 
\end{pmatrix} \in \R^{3 \times 4}, \quad 
D^{(X,1)}_4(\theta) = (\theta^2 - \theta^1, \theta^3 - \theta^2, \theta^4 - \theta^3).
\]

\paragraph{Example 2 \( (r = 2) \).}

For the second-order operator \( D^{(X,2)}_4 \), using the same four points as before, we have:
\[
D^{(X,2)}_4 = D^{(X,1)}_3 \, \text{diag}\left(\frac{1}{X^{(2)} - X^{(1)}}, \frac{1}{X^{(3)} - X^{(2)}}, \frac{1}{X^{(4)} - X^{(3)}}\right) D^{(X,1)}_4,
\]
which explicitly becomes:
\[
D^{(X,2)}_4 = \begin{pmatrix}
	\frac{1}{X^{(2)} - X^{(1)}} & \frac{-1}{X^{(2)} - X^{(1)}} + \frac{-1}{X^{(3)} - X^{(2)}} & \frac{1}{X^{(3)} - X^{(2)}} & 0 \\
	0 & \frac{1}{X^{(3)} - X^{(2)}} & \frac{-1}{X^{(3)} - X^{(2)}} + \frac{-1}{X^{(4)} - X^{(3)}} & \frac{1}{X^{(4)} - X^{(3)}}
\end{pmatrix}.
\]
This operator acts as a second-order difference operator, adjusted for the non-uniform spacing of the points.

\paragraph{Special Case \( (r = 0) \).}

For \( r = 0 \), the operator \( D^{(X,0)}_n \) is simply the identity matrix:
\[
D^{(X,0)}_n = I, \quad D^{(X,0)}_n(\theta) = \theta.
\]

\section{EXAMPLES USING ALGORITHM~\ref{alg-backfitting}}
\label{app_algorithm_example}
\vspace{5pt}
\begin{example}
    In Figure~\ref{fig:4}, we present the true quantile signal alongside the fitted values for each component in the model. The black curves represent the functions $f_{0j}(x) = a_jg_j(x) - b_j$ for $j = 1, \dots, 4$, where $a_j$ and $b_j$ are chosen such that $f_{0j}(X_j)$ has an empirical mean zero and an empirical norm $\|f_{0j}\|_n = 1$. Specifically:
    \begin{itemize}
        \item $g_{1}(x) = -\frac{1}{2}x^2$
        \item $g_{2}(x) = \frac{3}{2} \sin(4 \pi x) + \mathbbm{1}_{x \leq \frac{1}{2}} \cdot \sin(16 \pi x)$
        \item $g_{3}$ is a dummy dimension (where only one randomly assigned point takes a non-zero value)
        \item $g_{4}(x) = e^{3 x} \sin(4 \pi x)$.
    \end{itemize}
The blue curves in the figure plot the fitted $\widehat{f}_j$ values for $j = 1, \dots, 4$. The model is fitted on 1000 noisy data points, constructed similarly to our simulated experiments in Section 5. Here, the noise is generated from independent draws from a $t$ distribution with $3$ degrees of freedom denoted as  $t(3)$. This figure demonstrates that quantile additive trend filtering effectively captures varying levels of smoothness both within and between component functions, even under heavy-tailed error conditions.
\end{example}

\begin{example}
    Figure~\ref{fig:3} provides a 3D visualization of the true quantile signal $\sum_{j = 1}^{2}f_{0j}(X_j)$, the noisy data $y$, and the reconstructed signal $\sum_{j = 1}^{2}\widehat{f}_{j}$. These components, $f_{0j}$, $y$, and $\widehat{f}_{j}$ for $j = 1, \dots, 2$, are defined similarly to those in Example 1, and $X_j = (X^1_j \dots X^{2000}_j)$ constructed as described in our experiments in Section 5. The underlying component functions here are:
    \begin{itemize}
        \item $g_{1}(x) = \frac{1}{2} \cos(6 \pi x) + 0.1$
        \item $g_{2}(x) = -(x - \frac{1}{2})^2$.
    \end{itemize}
    With only two input dimensions, the model's output can be directly plotted along its inputs, allowing us to visually assess the model's effectiveness and the performance of our back-fitting algorithm.
\end{example}

\section{COMPUTATION SETTING}
\label{computation_setting}
In the main paper, we have provided detailed information on data reproduction in the experimental section referenced as \ref{experiment}. All experiments were conducted on a Linux-based system equipped with an Intel(R) Xeon(R) Platinum 8160 CPU running at 2.10 GHz. The system had 24 processor cores and a total memory capacity of 260 GB.
The experiments were performed using R version 4.2.2. 

\section{ADDITIONAL EXPERIMENTS RESULTS}
\label{add_exp_res}

\begin{table}[ht]
\centering
\caption {Average mean squared error, $\frac{1}{n}\sum_{i=1}^n(f^i_0 - \widehat f^i)^2$, averaging over 50 Monte Carlo simulations for the different methods considered. The best MSE is listed with bold text.}
\label{table2_more_result}
\begin{tabular}{|l|l|l|l|l|l|l|l|l|}
\hline
\multicolumn{1}{|c|}{n} & \multicolumn{1}{c|}{Scenario} & \multicolumn{1}{c|}{d} & \multicolumn{1}{c|}{$\tau$} & \multicolumn{1}{c|}{QATF1} & \multicolumn{1}{c|}{QATF0} & \multicolumn{1}{c|}{QS} & \multicolumn{1}{c|}{ATF1} & \multicolumn{1}{c|}{ATF0} \\ \hline
500                     & 1                             & 10                     & 0.2                         & \textbf{1.3440}            & 1.4921                     & 1.3603                  & NA                        & NA                        \\ 
1000                    & 1                             & 10                     & 0.2                         & \textbf{0.7628}            & 1.1676                     & 0.9103                  & NA                        & NA                        \\ 
2500                    & 1                             & 10                     & 0.2                         & \textbf{0.4034}            & 0.5036                     & 0.5405                  & NA                        & NA                        \\ \hline 
500                     & 1                             & 10                     & 0.8                         & \textbf{1.3648}            & 1.6115                     & 1.5026                  & NA                        & NA                        \\ 
1000                    & 1                             & 10                     & 0.8                         & \textbf{0.8058}            & 1.1484                     & 0.9332                  & NA                        & NA                        \\ 
2500                    & 1                             & 10                     & 0.8                         & \textbf{0.3812}            & 0.5585                     & 0.5304                  & NA                        & NA                        \\ \hline
500                     & 2                             & 10                     & 0.2                         & 3.1722                     & \textbf{3.1520}            & 3.3013                  & NA                        & NA                        \\ 
1000                    & 2                             & 10                     & 0.2                         & \textbf{3.0346}            & 3.2824                     & 3.1760                  & NA                        & NA                        \\ 
2500                    & 2                             & 10                     & 0.2                         & \textbf{1.3725}            & 1.6200                     & 1.6323                  & NA                        & NA                        \\ \hline 
500                     & 2                             & 10                     & 0.8                         & \textbf{4.4045}            & 4.5964                     & 4.6682                  & NA                        & NA                        \\ 
1000                    & 2                             & 10                     & 0.8                         & \textbf{2.7262}            & 3.1071                     & 2.7387                  & NA                        & NA                        \\ 
2500                    & 2                             & 10                     & 0.8                         & \textbf{1.4714}            & 1.6283                     & 1.7447                  & NA                        & NA                        \\ \hline
500                     & 3                             & 10                     & 0.2                         & \textbf{1.3607}            & 1.9141                     & 1.5422                  & NA                        & NA                        \\ 
1000                    & 3                             & 10                     & 0.2                         & \textbf{0.6426}            & 1.1087                     & 0.8326                  & NA                        & NA                        \\ 
2500                    & 3                             & 10                     & 0.2                         & \textbf{0.2518}            & 0.4391                     & 0.3849                  & NA                        & NA                        \\ \hline
500                     & 3                             & 10                     & 0.8                         & \textbf{2.2376}            & 2.4855                     & 2.4760                  & NA                        & NA                        \\ 
1000                    & 3                             & 10                     & 0.8                         & \textbf{1.7452}            & 2.0362                     & 1.9083                  & NA                        & NA                        \\ 
2500                    & 3                             & 10                     & 0.8                         & \textbf{1.1270}            & 1.2700                     & 1.2928                  & NA                        & NA                        \\ 
\hline 
500                     & 5                             & 10                     & 0.2                         & 2.4657                     & \textbf{2.3672}            & 2.7830                  & NA                        & NA                        \\ 
1000                    & 5                             & 10                     & 0.2                         & 1.6413                     & \textbf{1.4640}            & 1.7123                  & NA                        & NA                        \\ 
2500                    & 5                             & 10                     & 0.2                         & 0.9482                     & \textbf{0.6189}            & 0.9951                  & NA                        & NA                        \\ \hline 
500                     & 5                             & 10                     & 0.8                         & 2.7530                     & \textbf{2.5349}            & 2.8937                  & NA                        & NA                        \\ 
1000                    & 5                             & 10                     & 0.8                         & 1.5717                     & \textbf{1.5360}            & 1.6207                  & NA                        & NA                        \\ 
2500                    & 5                             & 10                     & 0.8                         & 0.9286                     & \textbf{0.6175}            & 0.9613                  & NA                        & NA                        \\ \hline
500  & 6        & 10 & 0.2    & \textbf{3.4553}                 & \textbf{3.4553}                 & 3.8648                  & NA                        & NA                                 \\ 
1000 & 6        & 10 & 0.2    & \textbf{2.7782}                 & 3.3125                     & 2.9650                  & NA                        & NA                                 \\ 
2500 & 6        & 10 & 0.2    & \textbf{2.2229}                 & 2.4699                     & 2.3843                  & NA                        & NA                                 \\ \hline
500  & 6        & 10 & 0.8    & \textbf{0.0245}                 & 0.0345                     & 2.6176                  & NA                        & NA                                 \\ 
1000 & 6        & 10 & 0.8    & \textbf{0.0243}                 & 0.0267                     & 1.7869                  & NA                        & NA                                 \\ 
2500 & 6        & 10 & 0.8    & \textbf{0.0245}                 & 0.0271                     & 1.4223                  & NA                        & NA                                 \\ \hline

\end{tabular}
\end{table}

\clearpage




\section{WORLD HAPPINESS COMPONENT PLOTS}
\label{sec:component_plots}

Provided in this section are the component plots for all variables considered in the statistical inference study on the World Happiness Data. 

\begin{figure}[htbp] 
    \centering 
    \includegraphics[width=\textwidth]{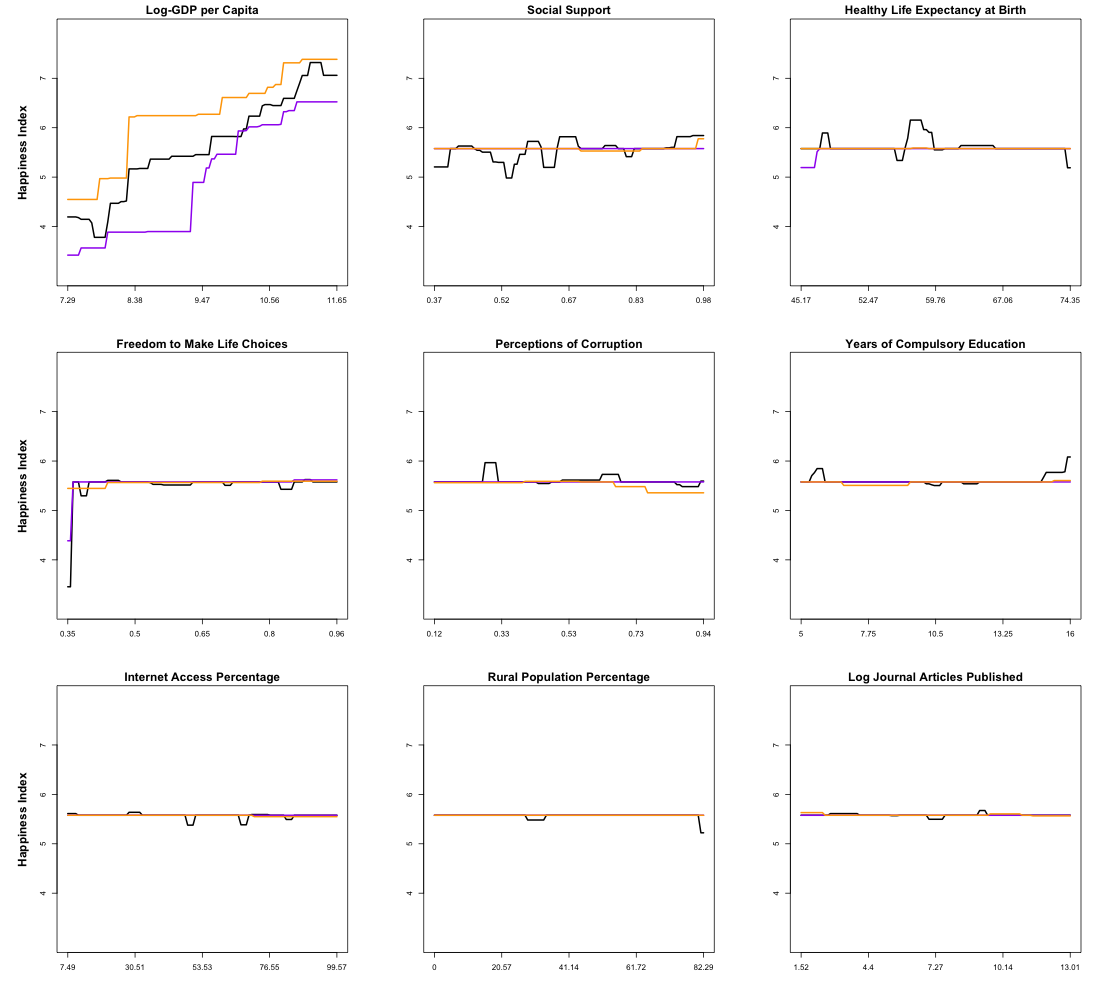} 
    \caption{Quantile Additive Trend Filtering component estimations with $\tau = 0.1, 0.5, \text{and  } 0.9$, plotted in purple, black, and orange, respectively, for most relevant components.}
    \label{fig:component_plots} 
\end{figure}




\end{document}